\crefname{hypothesis}{Hypothesis}{Hypotheses}
\def\dd{{\rm d}}
\def\weight(#1,#2){c_{#1,#2}}
\def\ra{{\rm ra}}
\def\vh{\hat{v}}
\def\pb{\bar{p}}
\def\vb{\bar{v}}
\def\xb{\bar{x}}
\def\yb{\bar{y}}
\def\mt{\tilde{m}}
\def\calk{{\mathcal K}}
\def\calp{{\mathcal P}}
\def\cR{{\mathcal R}}
\def\eps{\varepsilon}
\def\1B{{\bf  1}}
\def\argmin{\mathop{\rm argmin}}
\def\supp{\mathop{\rm supp}}
\def\Min{\mathop{\rm Min}}
\def\half{\mbox{$\frac{1}{2}$}}
\def\1B{{\bf  1}}
\newcommand{\NN}{\mathbb{N}}
\newcommand{\RR}{\mathbb{R}}
\def\NN{\mathbb{N}}
\def\cR{\mathbb{R}}
\newcommand\be{\begin{equation}}
\newcommand\ee{\end{equation}}
\newcommand\ba{\begin{array}}
\newcommand\ea{\end{array}}
\newcommand{\bea}{\begin{eqnarray}}
\newcommand{\eea}{\end{eqnarray}}
\newcommand{\bean}{\begin{eqnarray*}}
\newcommand{\eean}{\end{eqnarray*}}
\newcommand\bes{\begin{equation*}}
\newcommand\ees{\end{equation*}}
\def\rar{\rightarrow}
\def\la{\langle}
\def\ra{\rangle}
\newcommand{\hypConv}{(H1)}
\newcommand{\hypConvL}{(i)}
\newcommand{\hypConvC}{(ii)}
\newcommand{\hypConvPsi}{(iii)}
\newcommand{\hypReg}{(H2)}
\newcommand{\hypRegL}{(i)}
\newcommand{\hypRegA}{(ii)}
\newcommand{\hypRegF}{(iii)}
\newcommand{\hypRegPsi}{(iv)}
\newcommand{\hypBound}{(H3)}
\newcommand{\hypBoundL}{(i)}
\newcommand{\hypBoundC}{(ii)}
\newcommand{\hypBoundA}{(iii)}
\newcommand{\hypBoundSupport}{(iv)}
\newcommand{\hypBoundF}{(v)}
\newcommand{\hypBoundPsi}{(vi)}
\newcommand{\hypFeas}{(H4)}
\newcommand{\hypFeasLocal}{(i)}
\newcommand{\hypFeasGlobal}{(ii)}
\newcommand{\hypQualif}{(H5)}
\newcommand{\hypQualifEq}{(i)}
\newcommand{\hypQualifIneq}{(ii)}
\newcommand{\hypQualifLI}{(iii)}
\newcommand{\hypQualifInward}{(iv)}
\newcommand{\controlset}{\mathcal{U}}
\title{A Lagrangian approach for aggregative mean field games of controls with mixed and final constraints \thanks{
The first author was partially supported by the FiME Lab Research Initiative (Institut Europlace de Finance).
This article benefited from the support of the FMJH Program PGMO and from the support to this program from EDF.
}
}
\author{J. Fr\'{e}d\'{e}ric Bonnans\thanks{Université Paris-Saclay, CNRS, CentraleSupélec, Inria, Laboratoire des signaux et systèmes, 91190, Gif-sur-Yvette, France
(\email{frederic.bonnans@inria.fr}, \email{laurent.pfeiffer@inria.fr}).}
\and Justina Gianatti\thanks{CIFASIS-CONICET-UNR, Ocampo y Esmeralda, S2000EZP, Rosario, Argentina\\ (\email{gianatti@cifasis-conicet.gov.ar}).}
\and Laurent Pfeiffer\footnotemark[2]}
\begin{document}

\maketitle

\begin{abstract}
The objective of this paper is to analyze the existence of equilibria for a class of deterministic mean field games of controls. The interaction between players is due to
both a congestion term and a price function which depends on the distributions of the optimal strategies. Moreover, final state and mixed state-control constraints are considered, the dynamics being nonlinear and affine with respect to the control. The existence of equilibria is obtained by Kakutani's theorem, applied to a fixed point formulation of the problem. Finally, uniqueness results are shown under monotonicity assumptions.
\end{abstract}

\begin{keywords}
  Mean field games of controls, aggregative games, constrained optimal control, Lagrangian equilibria
\end{keywords}

\begin{AMS}
  49K15, 49N60, 49N80, 91A07, 91A16
\end{AMS}


\maketitle
\section{Introduction}

In this article we consider a Nash equilibrium problem involving a large number $N$ of agents, each of them solving a deterministic optimal control problem involving control-affine nonlinear dynamics, final state constraints, and mixed state-control constraints. The agents may only differ from each other by their initial condition. The interaction between the agents is induced by a price variable and a congestion term, which are determined by the collective behavior of the agents.
Our mathematical analysis focuses on an equilibrium problem which models the asymptotic limit when $N$ goes to infinity and when each isolated agent is supposed to have no impact on the coupling terms (the price variable and the congestion term).
Therefore the problem falls into the class of mean field games (MFGs), which have received considerable attention in the literature since their introduction in the pioneering works by Lasry and Lions \cite{LASRY2006619,LASRY2006679,Lasry2007} and Caines, Huang and Malham\'e  \cite{huang2006}.

Our work addresses two main difficulties. The first difficulty of our model is the interaction induced by the price variable. In the cost function of each agent, the price penalizes linearly the control variable. It is defined as a monotonic function of some aggregative term that can be interpreted as a demand. Here it is the average value of the controls exerted by all agents. This kind of interaction is similar to the one in Cournot models in economics, where companies without market power compete on the amount of some product. Our model is representative from games in energy markets involving a large number of small storage devices and some endogenous price depending on the average speed of charge of the devices. See for instance \cite{alasseur20,depaola15,liu20, paccagnan16}.
The second difficulty is the presence of mixed control-state constraints and final state constraints. They appear naturally in applications in electrical engineering: for example, when the storage devices must be fully (or partially) loaded at the end of the time frame. 
In the appendix, we motivate the use of mixed constraints with an example involving gas storages. 

In most MFG models proposed in the literature, the agents interact only through their position (their state variable). Mean field game models with interaction through the states and controls are now commonly called \emph{MFGs of controls}. The terminologies \emph{extended MFGs} and \emph{strongly coupled MFGs} are also employed.
Let us review the articles dedicated to such models.
In \cite{GOMES201449}, a stationary second order MFG of controls is studied. A deterministic MFG of controls is considered in \cite{GomesVoskanyan16}. An existence result has been obtained for a quite general MFG model in \cite{Cardaliaguet:2018aa}. A uniqueness result is provided in \cite{bertucci2018remarks}.
The works \cite{kobeissi2020mean,kobeissi2020classical} analyse the existence and uniqueness of classical solutions in the second order case. An existence result is provided in the monograph \cite[Section 4.6]{carmona18}, for MFGs described by forward backward stochastic differential equations.
The particular price interaction investigated in this article has been studied in \cite{Bonnans2019} in the second order case and in \cite{graber2020weak} in the case of a degenerate diffusion and potential congestion terms.

Most MFG models consist of a coupled system of partial differential equations (PDEs), the Fokker-Planck equation and the Hamilton-Jacobi-Bellman (HJB) equation.
The presence of final and mixed constraints in the underlying optimal control problem makes it difficult to characterize the behavior of a representative agent with the classical HJB approach. We therefore rely on a Lagrangian formulation of the problem, rather than on a PDE approach. More precisely, our equilibrium problem is posed on the set of Borel probability measures on the space of state-control trajectories.
The Lagrangian approach has been employed in several references dealing with deterministic MFGs. Variational MFGs are studied in \cite{Benamou2017}. The article \cite{MazantiSantambrogio} deals with minimal-time MFGs.  The three articles \cite{Cannarsa-Capuani, CCC11, cannarsa2018mean} deal with state-constrained MFGs and with the connection between the Lagrangian and the PDE formulations. In \cite{cannarsa2019mild} MFGs with linear dynamics are considered and in  \cite{achdou2021deterministic} state-constrained MFGs with control on the acceleration are studied.

At a methodological level, the common feature of almost all studies dedicated to MFGs of controls is the introduction of an auxiliary mapping, which allows to put the equilibrium problem in a reduced form that can be handled with a fixed point approach.
In the PDE approach, the auxiliary mapping allows to express the control of a representative agent at a given time $t$ in function of its current state $x$, the equilibrium distribution (of the states) and the gradient of the value function (see for example \cite[Lemma 4.60]{carmona18}, \cite[Lemma 5.2]{Cardaliaguet:2018aa} or \cite[Lemma 5]{Bonnans2019}).
This relation is in general not explicit, contrary to MFGs with interaction through the state variable only.
In the probabilistic approach of \cite[Assumption G]{GomesVoskanyan16}, the auxiliary mapping depends on $t$, $x$, and a pair of random variables $(X_t,P_t)$, whose distribution coincides with the distribution of pairs of state-costate of all agents in the game. In \cite[Lemma 4.61]{carmona18}, the auxiliary mapping directly depends on the distribution of $(X_t,P_t)$.
Our roadmap is the same as the one used in the references mentioned above: we introduce an auxiliary mapping (of the same nature as the one in \cite{carmona18}) which allows to write the equilibrium problem in a reduced form which is then tractable with a fixed point argument. After reformulation, the equilibrium problem is posed on the set of Borel probability measures on the space of state-costate trajectories.

Our article is 
 one of the very few publications dealing with first order MFGs of controls and Lagrangian formulation for these problems. 
(i) The article of Gomes and Voskanyan \cite{GomesVoskanyan16} is the closest to our work. Their analysis relies in a quite crucial manner on some regularity properties of the value function associated with the underlying optimal control problem (Lipschitz continuity, semiconcavity) which are easily demonstrated in their framework without constraints.
Those properties are not needed in the Lagrangian framework. They could probably be established, but under stronger qualification conditions than those in force in the present work.
Incidentally, the initial distribution of the agents must have a density in \cite{GomesVoskanyan16}, which is not the case in the present work.
(ii) Carmona and Delarue have an existence result, for an MFG of controls posed as a forward-backward stochastic differential equation, see \cite[Proposition 4.64]{carmona18}. This model relies on Pontryagin's principle, which is a sufficient condition only under convexity assumptions on the underlying optimal control problem (see the assumption SMP \cite[page 161]{carmona18}), which we do not need. Let us mention that their other result \cite[Proposition 4.64]{carmona18} concerns the second order case.
(iii) In a recent work, Graber, Mullenix and Pfeiffer have obtained the existence of a solution for an MFG of controls formulated as a coupled system of possibly degenerate PDEs. This work is restricted to the potential case, when the local congestion term is the derivative of some convex function. It also relies on a periodicity condition on the data functions, which we do not need here.
(iv) Recently in \cite{Santambrogio-Shim}, the authors study the existence of a Lagrangian equilibrium for an MFG of controls, following a variational approach instead of solving a fixed point problem, as proposed here.


The paper is organized as follows: In Section \ref{sec:2} we present the problem that we address here, referred to as MFGC. We introduce the main notation and we define the notion of Lagrangian equilibria for MFGC that we use throughout this work.
In Section \ref{sec:control-prob} we study the optimal control problem associated with an individual player, providing optimality conditions and regularity of solutions. Defining an auxiliary notion of equlibria, by a fixed point argument, in Section \ref{sec:existence}  we prove the existence of Lagrangian equilibria.
In Section \ref{sec:uniqueness}, under additional monotonicity assumptions we analyze the uniqueness of solutions.


\section{Description of the aggregative MFGC problem}    \label{sec:2}
\subsection{Preliminaries}  \label{sec:preliminaries}

Let $(X,d)$ be a separable metric space. We denote by 
 $\calp(X)$ the set of Borel probability measures on $X$. Given $p\in [1,+\infty)$, it is defined $\calp_p(X)$ as the set of probability measures $\mu$ on $X$ such that
$$
\int_X d(x, x_0)^p\dd\mu(x)< +\infty,
$$
for some (and thus any) $x_0\in X$. The Monge-Kantorovich distance on $\calp_p(X)$ is given by
$$
d_p(\mu, \nu)=\inf_{\pi\in\Pi(\mu, \nu)}\left[\int_Xd(x,y)^p\dd\pi(x,y) \right]^\frac{1}{p},
$$
where $\Pi(\mu, \nu)$ denotes the set of probability measures on $X\times X$ with first and second marginals equal to $\mu$ and $\nu$ respectively. In this paper, we work with $p=1$. For all $\mu, \nu\in \calp_1(X)$, we have the following formula (see \cite[Theorem 11.8.2]{Dudley}):
$$
d_1(\mu, \nu)=\sup\left\{\int_X f(x)\dd \mu(x)-\int_Xf(x)\dd\nu(x)\;|\;f  \colon X\to\cR\;\text{ is }1\text{-Lipschitz}\right\}.$$
%
%

We recall the definition of narrow convergence of measures. We say that the sequence $(\mu_n)_{n\in\NN}\subset \calp(X)$ narrowly converges to $\mu\in\calp(X)$ if
$$
\lim_{n\to\infty}\int_Xf(x)\dd\mu_n(x)=\int_Xf(x)\dd\mu(x), \;\;\;\forall f\in C_b^0(X),
$$
where $C_b^0(X)$ denotes the set of all  continuous and bounded real functions defined on $X$. Throughout this work we endow the space $\calp_1(X)$ with the narrow topology. 
As a consequence of \cite[Proposition 7.1.5]{ambrosio2008gradient}, for any compact set $K\subset X$, we have for all $p\geq 1$, $\calp(K)=\calp_p(K)$  and $d_p$ metricizes the narrow convergence of probability measures on the set $\calp(K)$. In addition, $\calp(K)$ is compact.




\subsection{MFG equilibria and main notation}     \label{sec:MFG-def-notation}


We start by defining  the optimal control problem that each agent aims to solve,  assuming that the price and the distribution of the other players are known. The problem takes the form of a constrained minimization problem parameterized by the initial condition $x_0\in\cR^n$, the agents distribution  $m\in C([0,T];\calp_1(\cR^n))$ and the price $P\in L^\infty(0,T;\cR^m)$. 

Let $\Gamma := H^1(0,T;\cR^n) \subset C(0,T;\cR^n)$ be equipped with 
 the supremum norm, denoted by $\| \cdot \|_\infty$. 
Given $x_0 \in \cR^n$, we define $\Gamma[x_0]$ by
\begin{equation*}
\Gamma[x_0]= \big\{ \gamma \in \Gamma :\, \gamma(0)= x_0 \big\}.
\end{equation*}

We take $L^2(0,T;\cR^m)$ as the control space, which we denote by $\controlset$. We denote by $\mathcal{K}[x_0]$ the feasible set that is defined by
\begin{equation*}
\mathcal{K}[x_0]
= \left\{
\begin{array}{ll}
(\gamma,v) \in \Gamma \times \controlset :  &
\begin{array}{ll}
\dot{\gamma}(t)= a(\gamma(t)) + b(\gamma(t)) v(t), & \text{for a.e.\@ $t \in (0,T),$} \\
\gamma(0)= x_0, & \\
c(\gamma(t),v(t)) \leq 0, & \text{for a.e.\@ $t \in (0,T)$},\\
g_1(\gamma(T))= 0, & \\
g_2(\gamma(T))\leq 0 &
\end{array}
\end{array}
\right\}.
\end{equation*}
The dynamics coefficients are $a \colon \cR^n \rightarrow \cR^n$ and $b \colon \cR^n \rightarrow \cR^{n \times m}$ (note that $b_i(x) \in \cR^n$ will denote the $i$-th column of $b(x)$). The final equality and inequality constraint functions are, respectively, $g_1 \colon \cR^n \rightarrow \cR^{n_{g_1}}$, and $g_2 \colon \cR^n \rightarrow \cR^{n_{g_2}}$, and the state-control constraint function is $c \colon \cR^n \times \cR^m \rightarrow \cR^{n_c}$. Now we define the cost functional $J[m, P] \colon \Gamma\times\controlset\to\cR$ as
\begin{equation*}
J[m,P](\gamma,v)
= \int_0^T \left(L(\gamma(t),v(t)) + \langle P(t), v(t) \rangle + f(\gamma(t),m(t))\right) \dd t + g_0(\gamma(T),m(T)).
\end{equation*}
Here $L \colon \cR^n \times \cR^m \rightarrow \cR$ represents the running cost of the agents, $f \colon\cR^n \times \mathcal{P}_1(\cR^n)\to\cR$, the congestion function, and $g_0 \colon \cR^n \times \mathcal{P}_1(\cR^n) \rightarrow \cR$ is the final cost. Therefore, the optimal control problem that each agent addresses is
\begin{equation} \label{eq:control_prob}
\Min_{(\gamma, v)\in \calk[x_0]} J[m,P](\gamma, v).
\end{equation}
The set of optimal trajectories for this minimization problem is denoted by
\begin{equation} 
\label{def-gam-m-p0-x}
\Gamma[m,P,x_0]
= \left\{ \bar{\gamma} \in \Gamma[x_0]\; :\;
\exists \bar{v} \in \controlset,\, (\bar{\gamma},\bar{v}) \;\;\text{is a solution to \eqref{eq:control_prob}}
\right\}.
\end{equation}

\subsubsection{Lagrangian MFGC equilibria}  \label{subsec:LagrangianMFGCequilibria}

In the previous paragraph, we have described the optimization problem, for a particular player, given the price and the agents distribution. We describe now how the price is related to the collective behavior of all agents and give a Lagrangian description of our mean field game.

Let $m_0\in\calp_1(\cR^n)$ be the initial distribution of the agents.
We fix a price function $\psi \colon \cR^m \rightarrow \cR^m$,  which is assumed to be bounded.
 For $t\in[0,T]$, the mapping $e_t:\Gamma\times   \controlset\to\cR^n$ is given by 
$
e_t(\gamma, v)=\gamma(t)
$. 
We define the set
$$
\calp_{m_0}\left(\Gamma\times  \controlset\right)=\big\{\eta\in\calp_1\left( \Gamma\times \controlset\right)\;:\;e_0\sharp\eta = m_0 \big\}.
$$
Given $\eta\in\calp_{m_0}(\Gamma\times   \controlset)$, we define the cost functional $J^\eta=J[m^\eta, P^\eta]$,  
where the coupling terms $m^\eta \colon t\in[0,T]\mapsto m^\eta_t\in\calp_1(\cR^n)$ and $P^\eta \in L^\infty(0,T;\cR^m)$ are given by
\begin{equation}    \label{eq:def-m-P-eta-Lagrangian}
m_t^\eta= e_t \sharp \eta \quad \text{and} \quad
P^\eta= \Psi \Big( \int_{\Gamma\times \controlset} v \, \dd\eta(\gamma,v) \Big).
\end{equation}
The continuity of the mapping $t\mapsto m^\eta_t$ will be ensured by Lemma \ref{lem:m-mu-hol}. In the definition of $P^\eta$, $\int_{\Gamma\times \controlset} v \,\mathrm{d}\eta(\gamma,v)$ is a Bochner integral with value in $L^2(0,T;\cR^m)$ (which is well defined since $\eta\in\calp_1(\Gamma\times \controlset)$) and the mapping $\Psi \colon \theta \in L^2(0,T;\cR^m) \rightarrow \Psi[\theta] \in L^\infty(0,T;\cR^m)$ denotes the Nemytskii operator associated with the price function $\psi \colon \cR^m\to \cR^m$, defined by $\Psi[\theta](t)= \psi(\theta(t))$, for a.e. $t \in (0,T)$.

Given $x_0 \in \cR^n$ and $\eta \in \calp_{m_0}(\Gamma\times \controlset)$, we denote by $\Gamma^\eta[x_0]$ the set of optimal state-control trajectories associated with the cost $J^\eta$ and set of constraints $\calk[x_0]$:
\begin{equation*}
\Gamma^\eta[x_0]=
\Big\{ (\bar{\gamma},\vb) \in \calk[x_0]  :\, J^\eta(\bar{\gamma}, \vb)\leq J^\eta(\gamma, v)\; \ \forall (\gamma,v) \in \calk[x_0]\Big\}.
\end{equation*}

\begin{definition} \label{def:lagrangian}
We call {\it Lagrangian MFGC equilibrium} any distribution $\eta \in \calp_{m_0}(\Gamma\times \controlset)$ supported on optimal trajectories, i.e.
\begin{equation*}
\supp(\eta) \subset \bigcup_{x \in \supp(m_0)} \Gamma^\eta[x].
\end{equation*}
\end{definition}

The main objective of this paper is to prove the existence of a Lagrangian MFGC equilibrium, under the assumptions described in the following subsection.

\if{
\subsubsection{Auxiliary MFGC equilibria}\label{auxMFGCequ}

In order to analyze the existence of Lagrangian MFGC equilibria, 
we propose here a new notion of equilibrium, that we call auxiliary equilibrium. 
 We set
\begin{equation*}
\tilde{\Gamma}= H^1(0,T;\cR^n) \times H^1(0,T;\cR^n).
\end{equation*}
We equip $\tilde{\Gamma}$ with the supremum norm, defined by 
$ 
\max( \| \gamma \|_\infty, \| p \|_\infty)
$ 
for a given pair $(\gamma,p) \in \tilde{\Gamma}$. We denote it (by extension) $\| (\gamma,p) \|_\infty$.
For any $x_0 \in \cR^n$, we define
\begin{equation*}
\tilde{\Gamma}[x_0]= \big\{ (\gamma,p) \in \tilde{\Gamma} \,:\, \gamma(0)= x_0 \big\}.
\end{equation*}
Given $t \in [0,T]$, we consider the mappings $\tilde{e}_t \colon \tilde{\Gamma}\to\cR^n$ and $\hat{e}_t \colon \tilde{\Gamma}\to\cR^n\times \cR^n$ defined by
$
\tilde{e}_t(\gamma,p)= \gamma(t)
$
and
$
\hat{e}_t(\gamma,p)= (\gamma(t),p(t))
$, for all $(\gamma,p) \in \tilde{\Gamma}$. We denote
\begin{equation*}
\mathcal{P}_{m_0}(\tilde{\Gamma}) = \big\{ \kappa \in \mathcal{P}_1(\tilde{\Gamma}) :\, \tilde{e}_0 \sharp \kappa = m_0 \big\}.
\end{equation*}

Given a distribution $\kappa\in\calp_1(\tilde{\Gamma})$, we consider the cost functional $\tilde{J}^{\kappa}:=J[\mt^\kappa, \tilde{P}^\kappa]$, 
where $\mt^\kappa_t=\tilde{e}_t\sharp\kappa$, for $t\in[0,T]$ 
and where $\tilde{P}^\kappa(t)$ is constructed as an auxiliary function of the
distribution $\hat{e}_t\sharp\kappa$
at time
$t$
in Lemma  \ref{lemma:aux_ex_un}. The well-posedness of $\tilde{J}^\kappa$ is established in Remark \ref{rem:def-P-kappa}.
Once $\tilde{P}^\kappa$ has been defined, we can consider the set of optimal trajectories and associated adjoint states $\tilde{\Gamma}^\kappa[x_0]$ defined by
\begin{equation*}
\tilde{\Gamma}^\kappa[x_0]
= \Big\{ (\bar\gamma,p) \in\tilde{\Gamma}[x_0]\,:\, \bar\gamma\in \Gamma[\mt^\kappa, \tilde{P}^\kappa,x_0]
\text{ and }
p \text{ costate associated with }\bar{\gamma} \Big\}.
\end{equation*}
The precise meaning of ``associated costate" will be given in Definition \ref{def:costate}.

\begin{definition}
  A measure $ \kappa\in \mathcal{P}_{m_0} (\tilde{\Gamma})$
  is an {\it auxiliary MFGC equilibrium} if
\bes
\supp(\kappa)\subset \bigcup_{x\in\supp(m_0)}\tilde{\Gamma}^{\kappa}[x].
\ees
\end{definition}
}\fi

\subsection{Assumptions}

For a given normed vector space $X$, we denote by $\bar{B}_X(R)$ the closed ball of radius $R$ and center 0. When the context is clear, we simply write $\bar{B}(R)$. Given $R>0$, we denote
$V(R)=\text{conv} \big\{ (x,v) : \, |x| \leq R,\, c(x,v) \leq 0 \big\}$. Finally, $\mathbf{1}$ stands for a vector of all ones, of appropriate dimension, and inequality between vectors means component-wise inequality.

We consider the following assumptions:
\begin{itemize}
\item[\hypConv{}] \emph{Convexity assumptions}
\begin{itemize}
\item[\hypConvL{}] There exists $C>0$ such that for all $x \in \cR^n$, the mapping $L(x,\cdot)$ is strongly convex with parameter $1/C$ and 
 for all $(x,v) \in \cR^n \times \cR^m$,
\begin{equation*}
L(x,v) \geq (1/C)|v|^2 - C.
\end{equation*}
\item[\hypConvC{}] For all $x \in \cR^n$ and $i=1,...,n_c$, the mapping $c_i(x,\cdot)$ is convex.
\item[\hypConvPsi{}] The mapping $\psi$ is monotone, i.e.\@ $\langle \psi(y)-\psi(x), y-x \rangle \geq 0$, for all $x$ and $y$ in $\cR^m$.
\end{itemize}
\end{itemize}

\begin{itemize}
\item[\hypReg{}] \emph{Regularity assumptions}
\begin{itemize}
\item[\hypRegL{}] The mappings $L$ and $c$ are twice continuously differentiable.
\item[\hypRegA{}] The mappings $a$, $b$, $g_0$, $g_1$, and $g_2$ are continuously differentiable.
\item[\hypRegF{}] For all $m \in \mathcal{P}_1(\cR^n)$, the mapping $f(\cdot,m)$ is continuously differentiable. The mappings $f$ and $D_x f$ are continuous with respect to both variables.
\item[\hypRegPsi{}] The mapping $\psi$ is continuous.
\end{itemize}
\end{itemize}

\begin{itemize}
\item[\hypBound{}] \emph{Boundedness and growth assumptions}
\begin{itemize}
\item[\hypBoundL{}] Let $R>0$. Then there exists $C(R)>0$ such that, for all $(x,v) \in V(R)$,
\begin{equation*}
\begin{array}{rl}
| D_x L(x,v) | \leq & \! \! C(R)(1 + |v|^2), \\
|D_v L(x,v)| \leq & \! \!  C(R) (1 + |v|).
\end{array}
\end{equation*}
\item[\hypBoundC{}] Let $R>0$. Then there exists $C(R)>0$ such that, for all $(x,v)$ and $(\tilde{x},\tilde{v})$ in $V(R) + \bar{B}(1)$,
\begin{equation*}
\begin{array}{rl}
| D_x c(x,v) | + |D_v c(v,x) | \leq & \! \! C(R), \\
|D_v c(x,v)- D_v c(\tilde{x},\tilde{v})| \leq & \! \! C(R) \big|(x,v)-(\tilde{x},\tilde{v}) \big|.
\end{array}
\end{equation*}
\item[\hypBoundA{}] There exists $C>0$ such that for all $x \in \cR^n$,
\begin{equation*}
|a(x)| \leq C(1 + |x|) \quad \text{and} \quad
|b(x)| \leq C(1 + |x|).
\end{equation*}
\item[\hypBoundSupport{}]
The support $K_0$ of $m_0$ is bounded.
    
\item[\hypBoundF{}] 
There exists $C>0$ such that for all $x_0 \in K_0$,  $m \in C(0,T;\mathcal{P}_1(\cR^n))$ and  $(\gamma,v) \in \mathcal{K}[x_0]$,
\begin{equation*}
\begin{array}{rl}
{\displaystyle \int_0^T } f(\gamma(t),m(t)) \dd t + g_0(\gamma(T), m(T)) \geq & \! \! -C \\[1.5em]
\| D_x f( \gamma(t),m(t)) \|_{L^\infty(0,T;\cR^n)} \leq & \! \! C.
\end{array}
\end{equation*}
\item[\hypBoundPsi{}] The mapping $\psi$ is bounded.
\end{itemize}
\end{itemize}

\begin{itemize}
\item[\hypFeas{}] \emph{Feasibility assumptions}
\begin{itemize}
\item[\hypFeasLocal{}] 
Let $R>0$. Then there exists a constant $C(R)>0$ such that, for all $x \in \bar{B}(R)$, there exists $v \in \bar{B}(C(R))$ satisfying 
$c(x,v) \leq 0$.
\item[\hypFeasGlobal{}] 
There exists $C>0$ such that for all $x_0 \in K_0$,  $m \in C(0,T;\mathcal{P}(\cR^n))$ with $m(0)=m_0$, and for all $P\in L^\infty(0,T;\cR^m)$ satisfying
\begin{equation} \label{eq:bound-P-psi}
\| P \|_{L^\infty(0,T;\cR^m)} \leq \sup_{\theta \in \cR^m} |\psi(\theta)|,
\end{equation}
there exists $(\gamma_0,v_0) \in \mathcal{K}[x]$ such that $J[m,P](\gamma_0,v_0) \leq C$.
\end{itemize}
\end{itemize}

\begin{itemize}
\item[\hypQualif{}] \emph{Qualification assumptions}
\begin{itemize}
\item[\hypQualifEq{}] 
There exists $C>0$ such that for all $x_0 \in K_0$,  $(\gamma,v) \in \mathcal{K}[x_0]$, and  $z_1 \in \cR^{n_{g_1}}$, there exists a pair $(y,w) \in H^1(0,T;\cR^n) \times L^\infty(0,T;\cR^m)$ 
solution of the linearized state equation
\begin{equation} \label{eq:linearized_dyn}
\begin{cases}
\begin{array}{rl}
\dot{y}(t)= & \! \! \! \! \big( Da(\gamma(t)) + {\displaystyle \sum_{i=1}^m} D b_i(\gamma(t)) v_i(t) \big) y(t) + b(\gamma(t)) w(t), \\
y(0)= & \! \! \! \!  0,
\end{array}
\end{cases}
\end{equation}
such that
$
Dg_1(\gamma(T)) y(T)= z_1,
$ 
and
\begin{equation*} 
\| y \|_{H^1(0,T;\cR^n)} \leq C |z_1| \quad \text{and} \quad
\| w \|_{L^\infty(0,T;\cR^n)} \leq C |z_1|.
\end{equation*}
\item[\hypQualifIneq{}] 
There exists $C >0$ such that for all $x_0 \in K_0$ and $(\gamma,v) \in \mathcal{K}[x_0]$, there exists $(y,w) \in H^1(0,T;\cR^n) \times L^\infty(0,T;\cR^m)$ satisfying \eqref{eq:linearized_dyn} such that
\begin{equation*}
\begin{cases}
\begin{array}{rl}
Dg_1(\gamma(T)) y(T)= & \! \! 0, \\
g_2(\gamma(T)) + Dg_2(\gamma(T)) y(T) \leq & \! \! -1/C \mathbf{1}, \\
c(\gamma(t),v(t)) + Dc(\gamma(t),v(t))(y(t),w(t)) \leq & \! \! - 1/C \mathbf{1}.
\end{array}
\end{cases}
\end{equation*}
In addition, 
$
\| y \|_{H^1(0,T;\cR^n)} \leq C 
$
and
$
\| w \|_{L^\infty(0,T;\cR^n)} \leq C.
$
\item[\hypQualifLI{}] There exists $C > 0$ such that for all $(x,v) \in \cR^n \times \cR^m$ satisfying $c(x,v) \leq 0$, and for all $\omega \in \cR^{|I(x,v)|}$,
\begin{equation*}
| D_v c_{I(x,v)}(x,v)^\top \omega | \geq (1/C) |\omega|,
\end{equation*}
where $I(x,v)= \{ i =1,...,n_c :\, c_i(x,u)= 0 \}$.
\item[\hypQualifInward{}]
Given $R>0$, let $x \in \bar{B}(R)$, and let $v \in \cR^m$ be such that $c(x,v) \leq 0$. Then there exist $C>0$, $\varepsilon> 0$, and $w \in \bar{B}(C)$ satisfying
\begin{equation*}
c(x,v) + D_v c(x,v) w \leq - \varepsilon \mathbf{1}.
\end{equation*}
In addition the constants $C>0$ and $\varepsilon>0$ only depend on $R$.

\end{itemize}
\end{itemize}

\begin{remark}
Let us comment on the nature and the motivation of some of the assumptions introduced above.
\begin{enumerate}
\item The first step of our analysis consists in finding a bound in $L^2$ for the optimal controls associated with problem \eqref{eq:control_prob}. This bound must be uniform with respect to $m$, $P$, $x_0$. We proceed with the standard approach from the calculus of variations, which requires:
\begin{itemize}
\item the existence of a feasible pair $(\gamma,v)$ with a uniformly bounded cost: this is ensured by \hypFeas{}-\hypFeasGlobal{}.
\item a lower bound of the cost function $J[m,P](\gamma,v)$ which holds for any feasible pair: this is ensured by \hypConv{}-\hypConvL{} and \hypBound{}-\hypBoundF{}. This also requires a bound on $P$, which is why we also impose that $\psi$ is bounded with Assumption \hypBound{}-\hypBoundPsi{}.
\end{itemize}
\item Assumptions \hypQualif{}-\hypQualifEq{} and \hypQualif{}-\hypQualifIneq{} together are qualification conditions (for the mixed and final constraints) in the form of Mangasarian-Fromovitz qualification conditions (see \cite[Section 2.3.4]{BS13}). They were employed in a similar context in \cite[Section 3]{bonnans2010second}.
\item Assumptions \hypQualif{}-\hypQualifLI{} and \hypQualif{}-\hypQualifInward{} are both qualification conditions for the constraints $c(x,v) \leq 0$ for a fixed value of $x$, this is why those qualification conditions only involve partial derivatives of $c$ with respect to $v$. They are used in particular in Lemma \ref{lemma:reg_metric_mixed} and Lemma \ref{lemma:aux_arg_min_ham}. They respectively take the form of linear independence qualification conditions and inward pointing conditions.
Assumption \hypQualif{}-\hypQualifLI{} was used in \cite[Equation 2.30]{BH09} and
Assumption \hypQualif{}-\hypQualifInward{} was used in \cite[Definition 2.5]{bonnans14} for example, in similar contexts.
\end{enumerate}
\end{remark}

\begin{remark} \label{rem:simplification_assumptions}
\begin{enumerate}
\item As was pointed out above, some of the assumptions are used to derive a priori bounds on the optimal controls associated with problem \eqref{eq:control_prob}.
However, if the set of feasible controls is bounded, these bounds are much easier to obtain and some simplifications can be done.
Assume that there exists a constant $C>0$ such that for any $(x,v) \in \cR^n \times \cR^m$, $c(x,v) \leq 0 \Rightarrow | v | \leq C$. 
Then it is easy to verify that for any $x_0 \in K_0$, for any $(\gamma,v) \in \mathcal{K}[x_0]$, it holds
\begin{equation*}
\| \gamma \|_{L^\infty(0,T;\cR^n} \leq C \quad \text{and} \quad
\| v \|_{L^\infty(0,T;\cR^n} \leq C,
\end{equation*}
increasing if necessary the value of $C$.
In this case, the following simplifications can be considered:
\begin{itemize}
\item Assumptions \hypBound{}-\hypBoundL{} and \hypBound{}-\hypBoundC{} are satisfied.
\item Assumption \hypBound{}-\hypBoundF{} can be ignored. We already have that $\gamma(t)$ takes values in a bounded set.
Moreover, one can require in this assumption that $m(t)$ lies in a set of probability measure with support included into a bounded set; such a set is compact for the topology of $d_1$. Therefore the bounds follow directly from the continuity of $f$, $D_x f$, and $g_0$.
\item It is not necessary to impose that $\psi$ is bounded.
\item Assumption \hypFeas{}-\hypFeasGlobal{} boils down to a feasibility assumption (the bound $J[m,P](\gamma_0,v_0) \leq C$ is then automatically satisfied).
\end{itemize}
\item The verification of Assumption \hypFeas{}-\hypFeasGlobal{} is made easier when $f$ is known to be bounded. Then it suffices to assume that there exists $C>0$ such that for all $x_0 \in K_0$, there exists $(\gamma_0,v_0) \in \mathcal{K}[x_0]$ with $\|v_0\|_{L^2(0,T:\cR^m)}\leq C$. In such a case, it is easy to deduce a bound of $\gamma_0$ in $L^\infty(0,T;\cR^n)$ and finally a bound of $J[m,P](\gamma_0,v_0)$, with the help of Assumption \hypBound{}-\hypBoundL{}.
\item 
In some situations, one can find a convex set $X \subseteq \cR^n$ such that for any $x_0 \in K_0$, for any $(\gamma,v) \in \mathcal{K}[x_0]$, for any $t \in [0,T]$, $\gamma(t) \in X$. In this case, the variable $x$ appearing in Assumptions \hypConv{}-\hypConvL{}, \hypConv{}-\hypConvC{}, \hypFeas{}-\hypFeasLocal{}, \hypQualif{}-\hypQualifLI{}, \hypQualif{}-\hypQualifInward{} can be restricted to $X':=(X+ B_{\cR^n}(\delta))$, where $\delta$ is chosen arbitrarily small. The statements of Lemma \ref{lemma:reg_metric_mixed} and Lemma \ref{lemma:aux_arg_min_ham} remain true for $x \in X$.
\end{enumerate}
\end{remark}


\begin{remark}
For the sake of simplicity in the presentation of this article, we consider time-independent data, but most of the results remain valid if the above assumptions hold uniformly with respect to  time. 
\end{remark}

\section{The optimal control problem}   \label{sec:control-prob}

In this section, we study the optimal control problem \eqref{eq:control_prob} that an individual player aims to solve. Throughout this section, we fix a triplet $(m, P, x_0)\in C(0,T;\calp_1(\cR^n))\times L^\infty(0,T;\cR^m)\times  K_0$ such that \eqref{eq:bound-P-psi} holds.

\subsection{Some technical results}

The next lemma is a metric regularity property, obtained from the Mangasarian-Fromovitz qualification condition \hypQualif{}-\hypQualifInward{}, which implies Robinson's qualification condition (see \cite[Section 2.3.4]{BS13}). Thus the lemma is a particular case of the Robinson-Ursescu stability theorem \cite[Theorem 2.87]{BS13}.

\begin{lemma} \label{lemma:reg_metric_mixed}
Let $R>0$. 
There exist 
  $\delta > 0$ and $C>0$ such that for all $(x,\tilde{x},\tilde{v}) \in \bar{B}(R)^2 \times \cR^m$ such that $c(\tilde{x},\tilde{v}) \leq 0$ and $|x-\tilde{x}| \leq \delta$, there exists $v \in \cR^m$ such that
\begin{equation*}
c(x,v) \leq 0 \quad \text{and} \quad |v-\tilde{v} | \leq C |x-\tilde{x}|.
\end{equation*}
Moreover, for fixed $\tilde{x}$ and $\tilde{v}$, $v$ can be constructed as a continuous function of $x$.
\end{lemma}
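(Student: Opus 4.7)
The plan is to bypass an abstract invocation of Robinson--Ursescu with an explicit formula for $v$ that directly yields both the Lipschitz estimate and continuity in $x$. For fixed $\tilde{x}$ and $\tilde{v}$ as in the statement, apply \hypQualif{}-\hypQualifInward{} at $(\tilde{x}, \tilde{v})$ to pick a vector $w \in \bar{B}(C(R))$ with $c(\tilde{x}, \tilde{v}) + D_v c(\tilde{x}, \tilde{v}) w \leq -\varepsilon(R)$, and define
\[
v(x) := \tilde{v} + \alpha \, |x - \tilde{x}| \, w,
\]
where $\alpha > 0$ will be chosen below. Since $w$ is fixed (depending only on $\tilde{x}$, $\tilde{v}$, and $R$), the map $x \mapsto v(x)$ is Lipschitz with constant $\alpha |w|$, and the only remaining task is to check $c(x, v(x)) \leq 0$.

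To verify this, set $s = \alpha |x - \tilde{x}|$ and expand componentwise,
\[
c_i(x, v(x)) = c_i(\tilde{x}, \tilde{v}) + \bigl[c_i(x, \tilde{v}) - c_i(\tilde{x}, \tilde{v})\bigr] + s\, D_v c_i(\tilde{x}, \tilde{v}) w + r_i(x).
\]
Since $(\tilde{x}, \tilde{v}) \in V(R)$ and $|x - \tilde{x}|, s|w| \leq 1$ provided $\delta$ is small enough, all intermediate evaluation points stay in $V(R) + \bar{B}(1)$, so \hypBound{}-\hypBoundC{} gives $|c_i(x, \tilde{v}) - c_i(\tilde{x}, \tilde{v})| \leq M |x - \tilde{x}|$ and $|r_i(x)| \leq M(s|x - \tilde{x}| + s^2 |w|)$, with $M$ depending only on $R$. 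Combining with the inward inequality and using $(1-s) c_i(\tilde{x}, \tilde{v}) \leq 0$ for $s \in (0,1]$ yields
\[
c_i(x, v(x)) \leq -s\, \varepsilon(R) + M |x - \tilde{x}| + M s \bigl(|x - \tilde{x}| + s |w|\bigr).
\]
Choosing $\alpha := 2M/\varepsilon(R)$ turns the first two terms into $-M |x - \tilde{x}|$, and shrinking $\delta$ further absorbs the remaining $O(|x - \tilde{x}|^2)$ terms; this gives $c(x, v(x)) \leq 0$ for $|x - \tilde{x}| \leq \delta$, and yields the Lipschitz constant $C := 2 M C(R)/\varepsilon(R)$.

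The main subtlety is that $\delta$ and $C$ must be uniform in $(\tilde{x}, \tilde{v})$ over the full domain specified in the statement, not only at a single base point. This uniformity is inherited from the uniform-in-base-point formulation of \hypQualif{}-\hypQualifInward{}, which produces the same $C(R)$ and $\varepsilon(R)$ for every feasible $(\tilde{x}, \tilde{v})$ with $|\tilde{x}| \leq R$, together with the fact that \hypBound{}-\hypBoundC{} controls $D_v c$ and its Lipschitz modulus on the enlarged tube $V(R) + \bar{B}(1)$ rather than pointwise. Continuity of $v$ in $x$ then requires no additional selection argument, since the explicit formula makes it manifest; this is the advantage of the constructive approach over a black-box application of the Robinson--Ursescu theorem, which would deliver only an existence statement.
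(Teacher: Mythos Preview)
Your proposal is correct and follows essentially the same approach as the paper's own proof: both pick the inward direction $w$ from \hypQualif{}-\hypQualifInward{}, set $v=\tilde v+\theta w$ with $\theta$ proportional to $|x-\tilde x|$, Taylor-expand $c(x,v)$ using the bounds in \hypBound{}-\hypBoundC{}, and tune the proportionality constant and $\delta$ so that the linear gain $-\theta\varepsilon(R)$ dominates. Your remark that this is an alternative to a ``black-box'' Robinson--Ursescu argument is slightly misplaced: although the paper cites Robinson--Ursescu as a possible route in the preamble to the lemma, the proof it actually gives is exactly this constructive one.
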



\begin{proof}
Let $R>0$.
The constant $\varepsilon$ used below, as well as all constants $C>0$, depend only on $R$.
Let $(x,\tilde{x},\tilde{v}) \in \bar{B}(R)^2 \times \cR^m$ be such that $c(\tilde{x},\tilde{v}) \leq 0$. 
By Assumptions \hypQualif{}-\hypQualifInward{}, there exist $w \in \cR^m$, $C>0$, and $\varepsilon>0$ such that
\begin{equation*}
c(\tilde{x},\tilde{v}) + D_v c(\tilde{x},\tilde{v}) w \leq -\varepsilon \quad \text{and} \quad
|w| \leq C.
\end{equation*}
Let $\theta \in [0,1]$ and let $v_{\theta}= \tilde{v} + \theta w$. We have
\begin{equation} \label{eq:reg_metric_1}
c(x,v_{\theta})
= c(\tilde{x},\tilde{v}) + \theta D_v c(\tilde{x},\tilde{v}) w + a_\theta + b_\theta,
\end{equation}
where
\begin{equation*}
\begin{array}{rl}
a_\theta= & {\displaystyle \int_0^1 } D_x c(\tilde{x} + s(x-\tilde{x}), \tilde{v} + s\theta w) (x-\tilde{x}) \dd s, \\[1em]
b_\theta= & \theta {\displaystyle \int_0^1 } \big[ D_v c(\tilde{x} + s(x-\tilde{x}), \tilde{v} + s\theta w) - D_v(\tilde{x},\tilde{v}) \big] w \dd s.
\end{array}
\end{equation*}
By Assumption \hypBound{}-\hypBoundC{}, we obtain 
\begin{equation*}
|a_\theta| \leq C |x-\tilde{x}| \quad \text{and} \quad
|b_\theta| \leq C \theta \big( |x- \tilde{x}| + \theta \big).
\end{equation*}
It follows from \eqref{eq:reg_metric_1} that
\begin{align}
c(x,v_{\theta})
= \ & (1-\theta) c(\tilde{x},\tilde{v}) + \theta \big[ c(\tilde{x},\tilde{v}) + D_v c(\tilde{x},\tilde{v}) w \big] + a_\theta + b_\theta \notag \\
\leq \ & -\theta \varepsilon + C |x-\tilde{x}| + C \theta^2. \label{eq:reg_metric_easy}
\end{align}
Let us define $\delta = \frac{\varepsilon^2}{4C^2}$ and $\theta = \frac{2C |x-\tilde{x}|}{\varepsilon}$,  
where $C$ is the constant appearing in the right-hand side of \eqref{eq:reg_metric_easy}.
We assume now that $|x-\tilde{x}| \leq \delta$ and we fix 
$
v= v_\theta.
$ 
It remains to verify that $c(x,v) \leq 0$. Note first that 
$
\theta \leq \frac{2C \delta}{\varepsilon} \leq \frac{\varepsilon}{2C}
$, 
by definition of $\delta$. It follows from \eqref{eq:reg_metric_easy} that
\begin{equation*}
c(x,v)
\leq - \theta \varepsilon + \underbrace{(C {\theta})}_{\leq \varepsilon/2} \theta + C |x-\tilde{x} |
\leq - \frac{\theta \varepsilon}{2} + C|x-\tilde{x}|
= 0,
\end{equation*}
which concludes the proof.
\end{proof}

\begin{lemma} \label{lemma:aux_arg_min_ham}
(i) For all $x \in \cR^n$ and for all $r \in \cR^m$, there exists a unique pair $(v,\nu) \in \cR^m \times \cR^{n_c}$  such that the following holds:
\begin{equation} \label{eq:ham_oc}
D_v L(x,v)^\top + r + D_v c(x,v)^\top \nu = 0, \quad
\nu \geq 0, \quad \text{and} \quad
\langle \nu, c(x,v) \rangle = 0.
\end{equation}
We denote it $(v[x,r],\nu[x,r])$. \\
(ii) Let $R>0$. The mapping $(x,r) \in \bar{B}(R) \mapsto (v[x,r],\nu[x,r])$ is Lipschitz continuous.
(iii) There exists $C>0$ such that for all $x \in \cR^n$, for all $r_1$ and $r_2$ in $\cR^m$, it holds
\begin{equation*}
\langle v_2 - v_1, r_2 - r_1 \rangle
+ \frac{1}{C} | v_2 - v_1 |^2 \leq 0,
\end{equation*}
where $v_j= v[x,r_j]$, for $j=1,2$.
\end{lemma}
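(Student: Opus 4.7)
The plan is to identify the conditions in \eqref{eq:ham_oc}, together with the implicit primal feasibility $c(x,v)\le 0$, as the KKT system of the parametric convex program
\[
\min_{v\in\cR^m}\bigl\{L(x,v)+\langle r,v\rangle\,:\,c(x,v)\le 0\bigr\},
\]
and then to invoke strong regularity tools to conclude. By \hypConv{}-\hypConvL{} the objective is strongly convex with modulus $1/C$ and coercive; by \hypConv{}-\hypConvC{} the feasible set is convex; and by \hypFeas{}-\hypFeasLocal{} it is nonempty. Hence a unique minimizer $v[x,r]$ exists. The Mangasarian--Fromovitz qualification \hypQualif{}-\hypQualifInward{} makes the KKT conditions necessary, producing a multiplier $\nu\ge 0$ satisfying \eqref{eq:ham_oc}; the LICQ \hypQualif{}-\hypQualifLI{} combined with the complementarity $\nu_i=0$ for $i\notin I(x,v)$ makes the reduced stationarity equation $D_vc_{I(x,v)}(x,v)^\top\nu_{I(x,v)} = -D_vL(x,v)^\top - r$ uniquely solvable, giving the uniqueness of $\nu[x,r]$ and completing (i).

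For (ii), I first derive uniform bounds on $(v,\nu)$ for $(x,r)\in\bar{B}(R)$. Picking the feasible $v_0\in\bar{B}(C(R))$ provided by \hypFeas{}-\hypFeasLocal{}, the minimality inequality combined with the coercive estimate $L(x,v)\ge|v|^2/C-C$ and the growth bound from \hypBound{}-\hypBoundL{} yields $|v|\le C'(R)$. Taking the scalar product of \eqref{eq:ham_oc} with the direction $w\in\bar{B}(C(R))$ produced by \hypQualif{}-\hypQualifInward{} --- which satisfies $D_vc_i(x,v)\,w\le-\varepsilon(R)$ on the active set since $c_i(x,v)=0$ there --- and using complementarity produces $\varepsilon(R)\|\nu\|_1\le|D_vL(x,v)\,w|+|r||w|\le C''(R)$.

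For the Lipschitz estimate, let $(x,r),(x',r')\in\bar{B}(R)$ with associated solutions $(v,\nu),(v',\nu')$. The strong convexity \hypConv{}-\hypConvL{} provides a strong second-order sufficient condition and \hypQualif{}-\hypQualifLI{} is the LICQ; these are precisely the hypotheses of Robinson's strong regularity theorem for parametric KKT systems, which yields Lipschitz dependence of $(v,\nu)$ on $(x,r)$, with constants controlled uniformly by the bounds of the previous paragraph, hence the Lipschitz continuity on $\bar{B}(R)$. Alternatively, a direct argument uses Lemma~\ref{lemma:reg_metric_mixed} to construct, for $|x-x'|$ small, points $\tilde v,\tilde v'$ with $c(x',\tilde v)\le 0$, $c(x,\tilde v')\le 0$ and $|\tilde v-v|+|\tilde v'-v'|\le C|x-x'|$; strong convexity then gives
\[
L(x,\tilde v')+\langle r,\tilde v'\rangle - L(x,v) - \langle r,v\rangle \ge \tfrac{1}{2C}|\tilde v'-v|^2
\]
and symmetrically at $(x',r')$. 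Summing and reorganizing the left-hand side as the cross-difference $[L(x,v')-L(x',v')]-[L(x,v)-L(x',v)]+(r-r')^\top(v'-v)$, controlled by the Lipschitzianity of $D_xL$ in $v$ on bounded sets (from \hypReg{}-\hypRegL{}), plus multiplier-dependent error terms treated through the KKT relation $D_vL(x',v')^\top+r'=-D_vc(x',v')^\top\nu'$ together with the complementarity $\nu'^\top c(x',v')=0$ and the uniform bound on $\|\nu'\|_1$, and finally applying Young's inequality, yields $|v-v'|\le C(|x-x'|+|r-r'|)$; subtracting the two stationarity equations and using the uniform injectivity from \hypQualif{}-\hypQualifLI{} gives the analogous estimate for $\nu$. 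The main obstacle in the direct approach is the H\"older-to-Lipschitz upgrade: the naive error term $\nu'^\top c(x',\tilde v')$ is a priori only of order $|x-x'|$, and obtaining a genuine Lipschitz rate requires carefully exploiting the KKT and complementarity structure --- which is why invoking Robinson's theorem is the cleanest route.
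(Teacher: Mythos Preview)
Your proposal is correct and takes essentially the same approach as the paper: part (i) is handled by recognizing \eqref{eq:ham_oc} as the KKT system of the strongly convex program $\min\{L(x,v)+\langle r,v\rangle : c(x,v)\le 0\}$ (the paper invokes \hypQualif{}-\hypQualifLI{} directly rather than \hypQualif{}-\hypQualifInward{} for existence of the multiplier, but this is immaterial), and part (ii) via Robinson's strong regularity theorem, which the paper cites as \cite[Proposition~5.38 and Theorem~5.13]{BS13}. Your explicit derivation of uniform bounds on $(v,\nu)$ and the sketched alternative direct argument are additions not present in the paper's proof, but the core route is identical.
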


\begin{proof}
(i) Let $(x,r) \in \cR^n\times\cR^m$. Consider the optimization problem:
\begin{equation} \label{eq:ham_pb}
\inf_{v \in \cR^m} L(x,v) + \langle r,v \rangle, \quad \text{subject to: } c(x,v) \leq 0.
\end{equation}
As a consequence of Assumption \hypConv{}-\hypConvL{}, the above cost function is coercive. By Assumption \hypFeas{}-\hypFeasLocal{}, there exists $v_0$ such that $c(x,v_0) \leq 0$. Therefore, \eqref{eq:ham_pb} possesses a solution $v$. As a consequence of the qualification assumption \hypQualif{}-\hypQualifLI{}, the optimality conditions exactly take the form of \eqref{eq:ham_oc}. This proves the existence part of the first part of the theorem. Now take a pair $(v,\nu)$ satisfying \eqref{eq:ham_oc}. Then, by the strong convexity of $L(x,\cdot)$ and by the convexity of the mappings $c_i(x,\cdot)$, $v$ is the unique solution to \eqref{eq:ham_pb} and $\nu$ is the associated Lagrange multiplier, it is also unique as a consequence of \hypQualif{}-\hypQualifLI{}.

(ii) Let us prove the Lipschitz continuity of $v[\cdot,\cdot]$, $\nu[\cdot,\cdot]$. We mainly rely on results of \cite{BS13}. We first reformulate \eqref{eq:ham_oc} as a generalized equation: given $(x,r)$, the pair $(v,\nu)$ satisfies \eqref{eq:ham_oc} if and only if
\begin{equation} \label{eq:eq_gen}
0 \in \Phi(v,\nu;x,r)+ N(\nu),
\end{equation}
where 
$ 
\Phi(v,\nu;x,r)= \Big( D_v L(x,v)^\top + r + D_v c(x,v)^\top \nu, -c(x,v) \Big) \in \cR^{m+ n_c}
$ 
and
$
N(\nu)=
\big\{ (0,z) \in \cR^{m+n_c} :\, z \leq 0,\, \langle z,\nu \rangle = 0 \big\}
$, if $\nu \geq 0$, 
and $N(\nu)= \emptyset$ otherwise. By \cite[Proposition 5.38]{BS13}, $(\bar{v},\bar{\nu})$ is a strongly regular solution of \eqref{eq:eq_gen} (in the sense of \cite[Definition 5.12]{BS13}). Note that the required sufficient second-order optimality conditions follow from the strong convexity of $L(x,\cdot)$ and the convexity of $c_i(x,\cdot)$. It follows then from \cite[Theorem 5.13]{BS13} that $v[\cdot,\cdot]$ and $\nu[\cdot,\cdot]$ are locally Lipschitz continuous, and therefore Lipschitz continuous on any compact set, as was to be proved.

(iii) Let us subtract equality \eqref{eq:ham_oc}, for $r_1$, from equality \eqref{eq:ham_oc}, for $r_2$, and consider the scalar product of the result with $v_2-v_1$. We obtain
\begin{align*}
& \underbrace{(D_v L(x,v_2)-D_v L(x,v_1))(v_2-v_1)}_{(a)}
+ \langle r_2- r_1,v_2-v_1 \rangle \\
& \qquad - \underbrace{
\langle D_v c(x,v_2)(v_1-v_2), \nu_2 \rangle}_{(b)}
- \underbrace{\langle D_v c(x,v_1)(v_2-v_1), \nu_1 \rangle}_{(c)}= 0,
\end{align*}
where $\nu_j= \nu[x,r_j]$, for $j=1,2$.
To conclude the proof, we just need to bound from below the term $(a)$ and to bound from above $(b)$ and $(c)$. By Assumption \hypConv{}-\hypConvL{}, we have $(a) \geq \frac{1}{C} | v_2-v_1 |^2$, for some constant $C>0$ independent of $x$, $r_1$, and $r_2$.
Using the complementarity condition, the convexity of $c$ with respect to its second variable (Assumption \hypConv{}-\hypConvC{}), and the nonnegativity of $\nu_2$, we further obtain that
\begin{align*}
(b) = \langle c(x,v_2) + D_v c(x,v_2)(v_1-v_2), \nu_2 \rangle
\leq \langle c(x,v_1), \nu_2 \rangle \leq 0.
\end{align*}
Similarly, $(c) \leq 0$. This concludes the proof.
\end{proof}

\begin{remark}
The twice differentiability of $L$ and $c$, required in Assumption \hypReg{}-\hypRegL{} is only used for the application of \cite[Proposition 5.38]{BS13} in the proof of Lemma \ref{lemma:aux_arg_min_ham}. It is sufficient to assume that $L$ and $c$ are continuously differentiable if $c$ does not depend on $x$ (i.e.\@ if we just have control constraints instead of mixed state-control constraints). In that case, the Lipschitz continuity is deduced from \cite[Proposition 4.32]{BS13}.
\end{remark}

\subsection{Estimates for the optimal solutions} 

The goal of this section is to derive some a priori bounds for solutions $(\bar{\gamma},\bar{v})$ to the optimal control problem \eqref{eq:control_prob} and for the associated costate and Lagrange multipliers. They will be crucial for the construction of an appropriate set of probability measures on state-costate trajectories.
We follow a rather standard methodology. The coercivity of $L$, together with other feasibility and bound conditions allows to show the existence of a solution and to derive a bound of $\bar{v}$ in $L^2(0,T;\cR^m)$. Then we provide first-order necessary optimality conditions and a bound on the associated costate $p$, with the help of the qualification conditions. We finally obtain bounds of $\bar{\gamma}$ and $p$ in $W^{1,\infty}(0,T;\cR^n)$ and $\vb$ in $L^\infty(0,T;\cR^m)$.

We recall that throughout this section the triplet $(m,P,x_0)$
is fixed and satisfies \eqref{eq:bound-P-psi}. Note that all constants $C$ used in this section are independent of $(m,P,x_0)$.

\begin{proposition} \label{prop:bound_control_pb_gamma}
The optimal control problem \eqref{eq:control_prob} has (at least) one solution.
There exist two constants 
 $M_1 > 0$ and $C>0$, independent of $m$, $P$, and $x_0$, such that for all solutions $(\bar{\gamma},\bar{v})$ to \eqref{eq:control_prob},
\begin{equation} \label{eq:bound_control_pb}
\| \bar{\gamma} \|_{L^\infty(0,T;\cR^n)} \leq M_1 \quad \text{and} \quad
\| \bar{v} \|_{L^2(0,T;\cR^m)} \leq C.
\end{equation}
\end{proposition}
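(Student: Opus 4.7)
The plan is to apply the direct method of the calculus of variations, in two stages: first establish a uniform a priori bound on any feasible pair whose cost lies below a fixed threshold, and then use this bound to extract convergent subsequences from a minimizing sequence and pass to the limit. For the $L^2$ bound on the control, I would first invoke \hypFeas{}-\hypFeasGlobal{} (applicable because $\|P\|_{L^\infty}\leq \sup|\psi|$ by \eqref{eq:bound-P-psi} and \hypBound{}-\hypBoundPsi{}) to produce a competitor of cost at most some $C_0$ independent of $(m,P,x_0)$, so that $\inf J[m,P]\leq C_0$. For any feasible $(\gamma,v)$ with $J[m,P](\gamma,v)\leq C_0$, combining the coercivity of $L$ from \hypConv{}-\hypConvL{}, the lower bound on $\int_0^T f(\gamma,m)\,\dd t + g_0(\gamma(T),m(T))$ from \hypBound{}-\hypBoundF{}, and Cauchy--Schwarz applied to $\int_0^T \langle P,v\rangle\,\dd t$ yields
\begin{equation*}
C_0 \;\geq\; \tfrac{1}{C}\|v\|_{L^2}^2 \;-\; \sqrt{T}\,\|P\|_{L^\infty}\,\|v\|_{L^2} \;-\; C,
\end{equation*}
a quadratic inequality in $\|v\|_{L^2}$ with positive leading coefficient, whose resolution gives a bound on $\|v\|_{L^2}$ independent of $(m,P,x_0)$. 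Plugging this back into the state equation, using the linear growth of $a$ and $b$ from \hypBound{}-\hypBoundA{} together with the boundedness of $K_0$ from \hypBound{}-\hypBoundSupport{}, a Gronwall argument then furnishes the $L^\infty$ bound $\|\gamma\|_\infty\leq M_1$ with $M_1$ uniform in $(m,P,x_0)$.

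To establish existence, I would apply these bounds to a minimizing sequence $(\gamma_k,v_k)\in\mathcal{K}[x_0]$: $\{v_k\}$ is bounded in $L^2(0,T;\cR^m)$ and $\{\gamma_k\}$ is bounded in $H^1(0,T;\cR^n)$, so up to a subsequence $v_k\rightharpoonup\bar v$ weakly in $L^2$ and $\gamma_k\to\bar\gamma$ uniformly (via the compact embedding $H^1\hookrightarrow C$). The state equation passes to the limit by combining the uniform convergence of $a(\gamma_k)$ and $b(\gamma_k)$ with the weak convergence of $v_k$ (a strong--weak product in $L^2$), and the terminal constraints pass to the limit by continuity of $g_1,g_2$. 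The mixed constraint $c(\bar\gamma,\bar v)\leq 0$ and the lower semicontinuity of $\int_0^T L(\bar\gamma,\bar v)\,\dd t$ are obtained from Ioffe's classical lower semicontinuity theorem for integrands that are convex in one variable and continuous in the other, made available by \hypConv{}-\hypConvL{} and \hypConv{}-\hypConvC{} together with the continuity from \hypReg{}-\hypRegL{}. The linear price term, the congestion integral, and the final cost pass by weak convergence of $v_k$ and uniform convergence of $\gamma_k$, using the continuity hypotheses \hypReg{}-\hypRegA{} and \hypReg{}-\hypRegF{}. Hence $(\bar\gamma,\bar v)$ is a minimizer, to which the preceding a priori bounds apply, yielding \eqref{eq:bound_control_pb}.

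The main technical obstacle is the simultaneous strong--weak passage to the limit in the mixed constraint $c(\gamma_k,v_k)\leq 0$ and in the $L$-integral: $\gamma_k$ converges only strongly in $C$ while $v_k$ converges only weakly in $L^2$. This is precisely the setting of Ioffe's lower semicontinuity theorem and is handled by the convexity-in-$v$ hypotheses on $c$ and $L$. Everything else reduces to coercivity, Gronwall's inequality, Cauchy--Schwarz, and standard weak compactness.
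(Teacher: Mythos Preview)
Your proposal is correct and follows essentially the same route as the paper: the direct method via the competitor from \hypFeas{}-\hypFeasGlobal{}, the coercivity estimate from \hypConv{}-\hypConvL{} combined with \hypBound{}-\hypBoundF{} and \hypBound{}-\hypBoundPsi{} to bound $\|v\|_{L^2}$, Gr\"onwall with \hypBound{}-\hypBoundA{} and \hypBound{}-\hypBoundSupport{} for the state bound, and weak--strong compactness to pass to the limit. The only difference is that where you invoke Ioffe's theorem to handle the mixed constraint and the lower semicontinuity of $\int L$, the paper carries out the convexity argument by hand: testing $\int \langle \varphi, c(\bar\gamma,\bar v)\rangle\,\dd t$ against nonnegative $\varphi$, splitting into a term controlled by $D_vc(\bar\gamma,\bar v)(\bar v - v_k)$ (which vanishes by weak convergence) and a term controlled by $\|\gamma_k-\bar\gamma\|_\infty$ via \hypBound{}-\hypBoundC{}; this is effectively a direct proof of the special case of Ioffe you need, so the two arguments are interchangeable.
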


\begin{proof}
The constants $C>0$ used in this proof only depend on the data of the problem.
Let $(\gamma_0,v_0) \in \mathcal{K}[x_0]$ satisfy Assumption \hypFeas{}-\hypFeasGlobal{}. Let $(\gamma_k,v_k)_{k \in \mathbb{N}}$ be a minimizing sequence. Without loss of generality, we can assume that
\begin{equation*}
J[m,P](\gamma_k,v_k) \leq J[m,P](\gamma_0,v_0), \quad
\forall k \in \mathbb{N}.
\end{equation*}
Using Assumption \hypConv{}-\hypConvL{}, the boundedness of $P$, and Assumption \hypBound{}-\hypBoundF{}, we deduce that
\begin{align*}
C \geq \ & J[m,P](\gamma_0,v_0) \geq J[m,P](\gamma_k,v_k) \notag \\
\geq \ & \frac{1}{C} \| v_k \|_{L^2(0,T)}^2
- C \| v_k \|_{L^1(0,T)} - C \geq \frac{1}{C} \| v_k \|_{L^2(0,T)}^2 - C,
\end{align*}
for some independent constants $C$.
It follows that $v_k$ is bounded in $L^2(0,T;\cR^m)$. By Gr\"onwall's lemma and Assumption \hypBound{}-\hypBoundA, there exists a constant $C>0$ such that
$
\| \gamma_k \|_{L^\infty(0,T;\cR^n)} \leq C.
$
The state equation further implies that
$\| \gamma_k \|_{H^1(0,T;\cR^n)} \leq C$.
Extracting a subsequence if necessary,  there exist $(\bar{\gamma},\bar{v}) \in H^1(0,T;\cR^n) \times L^2(0,T;\cR^m)$ and a $C>0$ such that
\begin{equation*} 
\| \bar{\gamma} \|_{H^1(0,T;\cR^n)} \leq C \quad \text{and} \quad \| \bar{v} \|_{L^2(0,T;\cR^m)} \leq C
\end{equation*}
and such that $(\gamma_k,v_k) \rightharpoonup (\bar{\gamma},\bar{v})$ for the weak topology of $H^1(0,T;\cR^n) \times L^2(0,T;\cR^m)$. Since $H^1(0,T;\cR^n)$ is compactly embedded in $L^\infty(0,T;\cR^n)$, we deduce that $\gamma_k$ converges uniformly to $\bar{\gamma}$.

Let us prove that $c(\bar{\gamma}(t),\bar{v}(t)) \leq 0$ for a.e.\@ $t \in (0,T)$. Let $\varphi \in L^\infty(0,T;\cR^{n_c})$ be such that $\varphi(t) \geq 0$ for a.e.\@ $t \in (0,T)$.
We have
\begin{equation*}
\int_0^T \langle \varphi(t), c(\bar{\gamma}(t),\bar{v}(t)) \rangle \dd t
= a_k + b_k + \int_0^T \langle \varphi(t),c(\gamma_k(t),v_k(t)) \rangle \dd t
\leq a_k + b_k,
\end{equation*}
where, skipping the time arguments
\begin{equation*}
a_k =  {\displaystyle \int_0^T} \big\langle \varphi, c(\bar{\gamma},\bar{v})-c(\bar{\gamma},v_k) \big\rangle \dd t\quad \text{and}\quad
b_k =  {\displaystyle \int_0^T} \big\langle \varphi, c(\bar{\gamma},v_k) - c(\gamma_k,v_k) \big\rangle \dd t.
\end{equation*}
Note that all these integrals are well-defined as a consequence of Assumption \hypBound{}-\hypBoundC{}.
Also by Assumption \hypBound{}-\hypBoundC{}, we easily verify that $D_v c(\bar{\gamma}(\cdot),\bar{v}(\cdot)) \in L^2(0,T;\cR^{n_c \times m})$. Therefore, by the convexity of the mappings $c_i(x,\cdot)$ in Assumption \hypConv{}-\hypConvC{},
\begin{equation*}
a_k \leq
\int_0^T \langle \varphi(t), D_v c(\bar{\gamma}(t),\bar{v}(t))(\bar{v}(t)-v_k(t)) \rangle \dd t
\underset{k \to \infty}{\longrightarrow} 0.
\end{equation*}
By Assumption \hypBound{}-\hypBoundC{}, we also have
\begin{equation*}
|b_k| \leq C \| \varphi \|_{L^\infty(0,T;\cR^{n_c})} \| \gamma_k - \bar{\gamma} \|_{L^\infty(0,T;\cR^n)}
\underset{k \to \infty}{\longrightarrow} 0.
\end{equation*}
It follows that for all $\varphi \geq 0$, $\int_0^T \langle \varphi, c(\bar{\gamma}(t),\bar{v}(t)) \rangle \dd t \leq 0$. Therefore, $c(\bar{\gamma}(t),\bar{v}(t)) \leq 0$, for a.e.\@ $t \in (0,T)$.
With similar arguments, we prove that $(\bar{\gamma},\bar{v})$ is feasible and that
\begin{equation*}
J[m,P](\bar{\gamma},\bar{v}) \leq \lim_{k \to \infty} J[m,P](\gamma_k,v_k),
\end{equation*}
which concludes the proof of optimality of $(\bar{\gamma},\bar{v})$.
Repeating the above arguments, we show that any solution to \eqref{eq:control_prob} satisfies the bound \eqref{eq:bound_control_pb}.
\end{proof}

We next state optimality conditions for the optimal control problem. The proof of the following proposition is deferred to the appendix in Section \ref{section:proof_oc}. In the rest of the section, we write $c[t]$ instead of $c(\bar{\gamma}(t),\bar{v}(t))$ (for a specified pair $(\bar{\gamma},\bar{v})$). We use the same convention for $a$, $b$, $g_0$, $g_1$, and $g_2$.

\begin{proposition} \label{prop:opti_cond}
Let $(\bar{\gamma},\bar{v})$ be a solution to \eqref{eq:control_prob}.
There exists a quintuplet
\begin{equation*}
(p,\lambda_0,\lambda_1,\lambda_2,\nu) \in W^{1,2}(0,T;\cR^n) \times \cR \times \cR^{n_{g_1}} \times \cR^{n_{g_2}} \times L^\infty(0,T;\cR^{n_c})
\end{equation*}
such that $(p,\lambda_0) \neq (0,0)$ and such that the adjoint equation
\begin{equation} \label{eq:opti_cond_oc1}
\left\{
\begin{array}{rl}
p(T)^\top \hspace{-0.1cm} = & \hspace{-0.2cm} \! \lambda_0 D g_0[T] + \lambda_1^\top D g_1[T] + \lambda_2^\top D g_2[T] \\[6pt]
-\dot{p}(t)^\top
\hspace{-0.1cm} = & \hspace{-0.2cm} \! \lambda_0 D_x L[t] + \lambda_0 D_x f[t]
+ p(t)^\top \big(D a [t] + \sum_{i=1}^m D b_i[t] v_i(t) \big)
+ \nu(t)^\top D_x c[t],
\end{array}
\right.
\end{equation}
the stationary condition
\begin{equation} \label{eq:opti_cond_oc2}
\lambda_0 D_v L [t] + \lambda_0 P(t)^\top + p(t)^\top b[t] + \nu(t)^\top D_v c[t]= 0,
\end{equation}
and the following sign and complementarity conditions
\begin{equation} \label{eq:opti_cond_oc3}
\begin{cases}
\begin{array}{ll}
\lambda_0 \geq 0, & \\
\lambda_2 \geq 0, \quad &
\langle \lambda_2, g_2(\bar{\gamma}(T)) \rangle = 0, \\
\nu(t) \geq 0, &
\langle c(\bar{\gamma}(t),\bar{v}(t)), \nu(t) \rangle= 0, \quad
\text{for a.e. $t \in (0,T)$}
\end{array}
\end{cases}
\end{equation}
are satisfied.
Moreover, if $\lambda_0\neq 0$, then $p\in W^{1,\infty}(0,T;\cR^n)$.
\end{proposition}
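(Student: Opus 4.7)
The plan is to apply a generalized Fritz John multiplier rule to \eqref{eq:control_prob} recast as an abstract constrained minimization in Banach spaces, and then unpack the abstract stationarity into the pointwise conditions \eqref{eq:opti_cond_oc1}--\eqref{eq:opti_cond_oc3}. I would work on $X = H^1(0,T;\cR^n) \times L^2(0,T;\cR^m)$ with the constraint mapping
\[
G(\gamma,v) = \big( \dot\gamma - a(\gamma) - b(\gamma)v,\; \gamma(0) - x_0,\; c(\gamma,v),\; g_1(\gamma(T)),\; g_2(\gamma(T)) \big),
\]
valued in $L^2(0,T;\cR^n) \times \cR^n \times L^2(0,T;\cR^{n_c}) \times \cR^{n_{g_1}} \times \cR^{n_{g_2}}$, with target cone reflecting the equality/inequality nature of each component. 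Under \hypReg, the cost $J[m,P]$ and $G$ are of class $C^1$ on $X$. Existence of $(\bar\gamma,\bar v)$ is already granted by Proposition~\ref{prop:bound_control_pb_gamma}. The Fritz John theorem in Banach spaces (cf.\ \cite[\S 3.1]{BS13}) yields multipliers $(\lambda_0,p,\lambda_1,\lambda_2,\nu)$, not all zero, with $\lambda_0,\lambda_2,\nu \geq 0$, the complementarity conditions \eqref{eq:opti_cond_oc3}, and vanishing of the derivative of the Lagrangian at $(\bar\gamma,\bar v)$. Testing variations in $\gamma$ only and integrating the pairing with $p$ by parts identifies $p$ as the costate satisfying \eqref{eq:opti_cond_oc1} with the stated transversality at $T$; testing variations in $v$ only yields \eqref{eq:opti_cond_oc2}.

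For the nontriviality $(p,\lambda_0) \neq (0,0)$, I argue by contradiction: if both vanish, then \eqref{eq:opti_cond_oc2} reduces to $\nu^\top D_v c = 0$, while $\nu \geq 0$ is supported on the active set $I(\bar\gamma(t),\bar v(t))$ by complementarity, so \hypQualif-\hypQualifLI forces $\nu \equiv 0$ almost everywhere. The transversality then reduces to $\lambda_1^\top Dg_1(\bar\gamma(T)) + \lambda_2^\top Dg_2(\bar\gamma(T)) = 0$. Using \hypQualif-\hypQualifIneq, pick $(y,w)$ with $Dg_1(\bar\gamma(T)) y(T) = 0$ and $g_2(\bar\gamma(T)) + Dg_2(\bar\gamma(T)) y(T) \leq -1/C$; combining with $\langle \lambda_2, g_2(\bar\gamma(T)) \rangle = 0$ and the identity $\lambda_2^\top Dg_2(\bar\gamma(T)) y(T) = -\lambda_1^\top Dg_1(\bar\gamma(T)) y(T) = 0$ gives $0 \leq -|\lambda_2|/C$, so $\lambda_2 = 0$. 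Then \hypQualif-\hypQualifEq, which provides surjectivity of $y \mapsto Dg_1(\bar\gamma(T)) y(T)$ onto $\cR^{n_{g_1}}$, forces $\lambda_1 = 0$, contradicting nontriviality.

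For the final regularity assertion, suppose $\lambda_0 \neq 0$. Then \eqref{eq:opti_cond_oc2} is exactly the system \eqref{eq:ham_oc} with $r(t) = P(t) + b(\bar\gamma(t))^\top p(t)/\lambda_0$, so by Lemma~\ref{lemma:aux_arg_min_ham} one has $\bar v(t) = v[\bar\gamma(t),r(t)]$ and $\nu(t)/\lambda_0 = \nu[\bar\gamma(t),r(t)]$ almost everywhere. Since $p \in W^{1,2}(0,T;\cR^n) \hookrightarrow C([0,T];\cR^n)$, while $P \in L^\infty$ and $\bar\gamma \in L^\infty$ by Proposition~\ref{prop:bound_control_pb_gamma}, the map $r$ is essentially bounded; the Lipschitz continuity from Lemma~\ref{lemma:aux_arg_min_ham}(ii) then delivers $\bar v,\nu \in L^\infty$. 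Re-injecting into the state equation gives $\bar\gamma \in W^{1,\infty}(0,T;\cR^n)$, and re-injecting into \eqref{eq:opti_cond_oc1} gives $p \in W^{1,\infty}(0,T;\cR^n)$.

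The main obstacle is extracting the multiplier $\nu$ with sufficient regularity from the abstract rule: in the natural $L^2$ setting, $\nu$ lives a priori in the dual of $L^2$, while the stated $L^\infty$ regularity requires more care. One clean route is to work in an $L^\infty$ setting for $v$ and rely on the Robinson-Ursescu stability ensured by the Mangasarian-Fromovitz condition \hypQualif-\hypQualifInward, of which Lemma~\ref{lemma:reg_metric_mixed} is a quantitative avatar; the linear independence condition \hypQualif-\hypQualifLI then allows one to read $\nu$ pointwise from \eqref{eq:opti_cond_oc2} once $\bar v$ is known to be bounded, which promotes $\nu$ to $L^\infty$ a posteriori.
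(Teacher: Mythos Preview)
Your approach has a genuine gap that the paper anticipates and explicitly circumvents. The difficulty is the one you flag yourself in the last paragraph: at the moment you want to apply an abstract Fritz John rule, the optimal control $\bar v$ is only known to lie in $L^2(0,T;\cR^m)$ (Proposition~\ref{prop:bound_control_pb_gamma}), not in $L^\infty$. Your suggested fix---``work in an $L^\infty$ setting for $v$''---is circular: Robinson's qualification and the usual multiplier rules in $L^\infty$ require $\bar v \in L^\infty$ to begin with, and that boundedness is precisely what you obtain \emph{after} the optimality conditions are in hand (via Lemma~\ref{lemma:aux_arg_min_ham} and relation \eqref{eq:opti_cond_oc2}). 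The paper states this obstruction explicitly at the start of Appendix~\ref{section:proof_oc}. Staying in the $H^1 \times L^2$ setting is also problematic: the cost involves $\int_0^T L(\gamma,v)\,\dd t$ with $|D_x L| \leq C(1+|v|^2)$ by \hypBound-\hypBoundL, so Fr\'echet differentiability of $J$ on $H^1 \times L^2$ is not immediate, and the multiplier $\nu$ would a priori land in $L^2$ rather than $L^\infty$.

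The paper takes a different route, tailored to unbounded controls. It augments the state to $\bar x = (\bar\gamma, \int_0^{\cdot} \tilde L, \int_0^{\cdot} \bar v) \in \cR^{n+1+m}$, recasts \eqref{eq:control_prob} as a Mayer problem for a differential inclusion $\dot x \in F(t,x)$, and applies the extended Euler--Lagrange conditions of \cite[Theorem~7.5.1]{Vin00}. That result needs only that $F$ be measurable with closed convex values and satisfy a one-sided Lipschitz inclusion of the type \eqref{eq:technical_inclusion} with an $L^1$ modulus---no $L^\infty$ bound on $\bar v$ is required. The third augmented component $x^{(3)} = \int v$ is what makes $F(t,x)$ closed despite the unbounded control set, and Lemma~\ref{lemma:reg_metric_mixed} is used to verify \eqref{eq:technical_inclusion}. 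The abstract conditions are then unpacked by analysing the limiting normal cone of $\mathrm{Gr}\,F(t,\cdot)$, where the inward condition \hypQualif-\hypQualifInward{} yields uniform bounds on the pointwise multipliers $\nu_k$ that allow passage to the limit; the $L^\infty$ regularity of $\nu$ is then read off from \eqref{eq:opti_cond_oc2} via Lemma~\ref{lemma:aux_arg_min_ham} (case $\lambda_0\neq 0$) or from \hypQualif-\hypQualifLI{} (case $\lambda_0=0$), exactly as in your final-regularity paragraph. Your nontriviality argument for $(p,\lambda_0)\neq(0,0)$ is fine in spirit, but in the paper this comes for free from Vinter's theorem.
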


The goal of the last two results in this subsection is to obtain uniform bounds for the optimal solutions and their associated multipliers.

\begin{proposition} \label{prop_bound_p}
Let $(\bar{\gamma},\bar{v})$ be a solution to \eqref{eq:control_prob}.
There exists a quintuplet
$(p,\lambda_0,\lambda_1,\lambda_2,\nu)$ satisfying the optimality conditions of the above proposition and such that $\lambda_0= 1$. Moreover, for such a quintuplet, we have
\begin{equation*}
\| p \|_{L^\infty(0,T;\cR^n)} \leq M_2
\end{equation*}
for some constant $M_2$ independent of $(\bar{\gamma},\bar{v})$ and $(p,\lambda_0,\lambda_1,\lambda_2,\nu)$.
\end{proposition}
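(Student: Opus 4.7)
The plan is to first rule out the abnormal case $\lambda_0=0$ (so that one can normalize to $\lambda_0=1$), then use the qualification assumption \hypQualif{}-\hypQualifIneq{} to bound $\lambda_2$ and $\nu$, then \hypQualif{}-\hypQualifEq{} to bound $\lambda_1$, and finally integrate the adjoint equation with Gr\"onwall's lemma to bound $p$. The underlying identity is obtained by integrating by parts: for any $(y,w)$ solving the linearized state equation \eqref{eq:linearized_dyn}, one has, using the stationary condition \eqref{eq:opti_cond_oc2} to eliminate $p^\top b$,
\begin{equation*}
p(T)^\top y(T)
= -\lambda_0 \int_0^T \big[ D_x L[t] y + D_x f[t] y + D_v L[t] w + P(t)^\top w \big] \dd t
- \int_0^T \nu(t)^\top Dc[t](y,w) \dd t.
\end{equation*}
Combined with the terminal condition from \eqref{eq:opti_cond_oc1}, this expresses $\lambda_1^\top (Dg_1[T]y(T)) + \lambda_2^\top (Dg_2[T] y(T)) + \lambda_0 Dg_0[T] y(T)$ in terms of quantities one can control.

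Suppose for contradiction that $\lambda_0=0$. Pick $(y,w)$ from \hypQualif{}-\hypQualifIneq{}: then $Dg_1[T]y(T)=0$, and complementarity together with $c[t]+Dc[t](y,w)\le -1/C$ yields $\nu^\top Dc[t](y,w)\le -\nu^\top/C$; similarly $\lambda_2^\top Dg_2[T]y(T)\le -\lambda_2^\top/C$. The identity above becomes $\lambda_2^\top Dg_2[T]y(T)\ge \int_0^T \nu^\top/C\,\dd t$, i.e. $-\lambda_2^\top/C \ge \int \nu^\top /C \,\dd t \ge 0$, forcing $\lambda_2=0$ and $\nu\equiv 0$. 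The stationary condition then reduces to $p^\top b=0$, and applying the identity once more with $(y,w)$ from \hypQualif{}-\hypQualifEq{} for an \emph{arbitrary} $z_1\in\cR^{n_{g_1}}$ gives $\lambda_1^\top z_1=0$, whence $\lambda_1=0$, $p(T)=0$, and (by the linear adjoint ODE) $p\equiv 0$. This contradicts $(p,\lambda_0)\neq 0$, so $\lambda_0>0$ and we rescale to $\lambda_0=1$.

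With $\lambda_0=1$, apply the identity with $(y,w)$ from \hypQualif{}-\hypQualifIneq{} to get
\begin{equation*}
\frac{\lambda_2^\top}{C}+\int_0^T \frac{\nu^\top}{C}\,\dd t
\le Dg_0[T]y(T) + \int_0^T \big[ D_x L y + D_x f y + D_v L w + P^\top w \big] \dd t.
\end{equation*}
The right-hand side is uniformly bounded: \hypRegA{} gives a uniform bound on $Dg_0[T]$ since $\bar\gamma(T)$ stays in a compact set by Proposition \ref{prop:bound_control_pb_gamma}; the growth bounds \hypBound{}-\hypBoundL{} control $D_xL$ and $D_vL$ through $\|\bar v\|_{L^2}$ (which is bounded by \eqref{eq:bound_control_pb}); \hypBound{}-\hypBoundF{} bounds $D_xf$; \hypBound{}-\hypBoundPsi{} together with \eqref{eq:bound-P-psi} bounds $P$; and $\|y\|_\infty$, $\|w\|_\infty$ are bounded by \hypQualif{}-\hypQualifIneq{}. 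Hence $|\lambda_2|+\|\nu\|_{L^1(0,T;\cR^{n_c})}\le C$. Next, for any $z_1\in\cR^{n_{g_1}}$, pick $(y,w)$ from \hypQualif{}-\hypQualifEq{} realizing $Dg_1[T]y(T)=z_1$ with $\|y\|_\infty,\|w\|_\infty\le C|z_1|$. The identity rewrites $\lambda_1^\top z_1$ as a sum of terms each bounded by $C|z_1|$ (using the previously obtained bound on $\nu$, $\lambda_2$), so $|\lambda_1|\le C$.

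Finally, all multipliers are bounded, so $|p(T)|\le C$. The adjoint equation \eqref{eq:opti_cond_oc1} yields
\begin{equation*}
|p(t)|\le |p(T)|+\int_t^T\!\! \big[|D_xL|+|D_xf|+|D_xc||\nu|+|Da+\textstyle\sum_i Db_i v_i||p|\big]\dd s,
\end{equation*}
where $|D_xL|\le C(1+|\bar v|^2)\in L^1$, $|D_xf|$ and $|D_xc|$ are uniformly bounded by \hypBound{}-\hypBoundF{} and \hypBound{}-\hypBoundC{}, and $|Da+\sum_i Db_i v_i|\in L^2\subset L^1$. A backward Gr\"onwall argument then yields $\|p\|_{L^\infty(0,T;\cR^n)}\le M_2$ for some constant $M_2$ depending only on the data. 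The main technical obstacle is the first step: carefully using \emph{both} qualification conditions \hypQualif{}-\hypQualifEq{} and \hypQualif{}-\hypQualifIneq{} simultaneously in the duality identity to force $\lambda_0\neq 0$, since the identity mixes all the multipliers and requires the complementarity and sign conditions \eqref{eq:opti_cond_oc3} to extract the sign information that yields the contradiction.
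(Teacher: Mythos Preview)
Your proof is correct and follows essentially the same approach as the paper. The only organizational difference is that the paper keeps $\lambda_0$ as a parameter throughout, deriving $|\lambda_2|\le C\lambda_0$, $\|\nu\|_{L^1}\le C\lambda_0$, $|\lambda_1|\le C\lambda_0$ directly and then reading off $\lambda_0\neq 0$ as an immediate corollary, whereas you first treat $\lambda_0=0$ by contradiction and then redo the estimates with $\lambda_0=1$; the underlying duality identity, the use of \hypQualif{}-\hypQualifIneq{} for $\lambda_2,\nu$ and \hypQualif{}-\hypQualifEq{} for $\lambda_1$, and the final Gr\"onwall argument are identical.
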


\begin{proof}
The proof essentially relies on the qualification conditions \hypQualif{}-\hypQualifEq{} and \hypQualif{}-\hypQualifIneq{}. All constants $C$ used in the proof are independent of $(p,\lambda_0,\lambda_1,\lambda_2,\nu)$ and $(\bar{\gamma},\bar{v})$.
Let $(y,w)$ satisfy the linearized equation \eqref{eq:linearized_dyn} (for $(\gamma,v)= (\bar{\gamma},\bar{v})$).
By integration by parts we have
\begin{align*}
\langle p(T),y(T) \rangle
= \ & \int_0^T \langle \dot{p}(t),y(t) \rangle + \langle p(t), \dot{y}(t) \rangle \dd t \\
= \ & - \lambda_0 \int_0^T (D_x L[t] + D_x f[t]) y(t) \dd t \\
&  - \int_0^T p(t)^\top \big( D a[t] + {\textstyle \sum_{i=1}^m} D b_i[t] \bar{v}_i(t) \big) y(t) \dd t
- \int_0^T \nu(t)^\top D_x c[t] y(t) \dd t \\
& + \int_0^T p(t)^\top \big( Da[t] + {\textstyle \sum_{i=1}^m} D b_i[t] \bar{v}_i(t) \big) y(t) \dd t
+ \int_0^T p(t)^\top b[t] w(t) \dd t.
\end{align*}
The second and the fourth integral cancel out. Injecting the optimality condition \eqref{eq:opti_cond_oc2} in the 
last integral, we obtain:
\begin{align}    \label{eq:ipp}
\langle p(T),y(T) \rangle
= \ & - \lambda_0 \int_0^T \Big( D L[t](y(t),w(t)) + \langle P(t),w(t) \rangle + D_x f[t] y(t) \Big) \dd t \notag \\
& \quad -\int_0^T  \nu(t)^\top Dc[t](y(t),w(t)) \dd t.
\end{align}
The main feature of this formula is that the right-hand side is independent of $p$.
Let $(y,w)$ satisfy Assumption \hypQualif{}-\hypQualifIneq{}. 
  By \hypBound{}-\hypBoundL{}, \hypBound{}-\hypBoundPsi{} and \hypBound{}-\hypBoundF{} we have
\bes 
\ba{l}
\int_0^T \left(D L[t](y(t),w(t)) + \langle P(t),w(t) \rangle + D_x f[t] y(t)\right) \dd t\\[5pt]
\hspace{2cm}\leq C \|y\|_\infty\int_0^T(1+|\vb(t)|^2)\dd t+ C\|w\|_\infty\int_0^T(1+|\vb(t)|)\dd t \leq C,
\ea 
\ees
where the last inequality holds by Proposition \ref{prop:bound_control_pb_gamma} and \hypQualif{}-\hypQualifIneq{}.
By \hypQualif{}-\hypQualifIneq{} and the complementarity conditions \eqref{eq:opti_cond_oc3} we obtain
\begin{align*}
\int_0^T  \nu(t)^\top Dc[t](y(t),w(t)) \dd t=& \ \int_0^T  \nu(t)^\top \left(c[t]+Dc[t](y(t),w(t))\right) \dd t\\
\leq & \ -\frac{1}{C}\|\nu\|_{L^1(0,T;\cR^{n_c})}.
\end{align*}
Therefore,
\begin{equation} \label{eq:prod_scal1}
\langle p(T),y(T) \rangle \geq -C \lambda_0 + \frac{1}{C} \| \nu \|_{L^1(0,T;\cR^{n_c})}.
\end{equation}
Moreover, we deduce from the terminal condition for $p$ that
\begin{align}
\langle p(T),y(T) \rangle
= & \ \lambda_0 Dg_0[T] y(T) + \langle \lambda_1, Dg_1[T] y(T) \rangle
+ \langle \lambda_2, Dg_2[T] y(T) \rangle \notag \\
= & \ \lambda_0 Dg_0[T] y(T) + \langle \lambda_2, g_2[T] + Dg_2[T] y(T) \rangle \notag \\
\leq & \ C \lambda_0- \frac{1}{C} | \lambda_2 |. \label{eq:prod_scal2}
\end{align}

The last inequality holds by \hypReg{}-\hypRegA{}, Proposition \ref{prop:bound_control_pb_gamma} and   \hypQualif{}-\hypQualifIneq{}.
It follows from \eqref{eq:prod_scal1} and \eqref{eq:prod_scal2} that
\begin{equation} \label{eq:bound_lambda_2}
|\lambda_2| \leq C \lambda_0 \quad \text{and} \quad
\| \nu \|_{L^1(0,T;\cR^{n_c})} \leq C \lambda_0.
\end{equation}
Now, let us consider $(y,w)$ satisfying \hypQualif{}-\hypQualifEq{} with $z_1= \lambda_1/ | \lambda_1 |$.
We have
\begin{equation*}
\| y \|_{L^\infty(0,T;\cR^n)} \leq C \quad \text{and} \quad \| w \|_{L^\infty(0,T;\cR^n)} \leq C.
\end{equation*}
Since $Dc[t]$ is bounded in $L^\infty(0,T;\cR^{n_c \times (n+m)})$ (by Assumption \hypBound{}-\hypBoundC{}), we have
\begin{equation} \label{eq:estimDc}
\| Dc[\cdot](y(\cdot),w(\cdot)) \|_{L^\infty(0,T;\cR^{n_c})} \leq C.
\end{equation}
Formula \eqref{eq:ipp}, together with the bound on $\| \nu \|_{L^1(0,T;\cR^{n_c})}$ and \eqref{eq:estimDc} yields
\begin{equation} \label{eq:bound_lambda_1a}
\langle p(T), y(T) \rangle \leq  C \lambda_0.
\end{equation} 
It follows from the terminal condition and  the estimate on $|\lambda_2|$ that
\begin{align}
\langle p(T), y(T) \rangle
= \ &\lambda_0 Dg_0[T] y(T)
+ \langle \lambda_1, Dg_1[T] y(T) \rangle
+ \langle \lambda_2, Dg_2[T] y(T) \rangle \notag  \\
\geq \ & -C \lambda_0 + | \lambda_1 |. \label{eq:bound_lambda_1b}
\end{align}
Combining \eqref{eq:bound_lambda_1a} and \eqref{eq:bound_lambda_1b}, we deduce that
\begin{equation} \label{eq:bound_lambda_1}
| \lambda_1 | \leq C \lambda_0.
\end{equation}
If $\lambda_0= 0$, then $\lambda_1= 0$, $\lambda_2=0$, and $\nu= 0$. Thus $p(T)=0$ and $\dot{p}(t)=0$ a.e.\@ and therefore $p=0$, in contradiction with $(p,\lambda_0) \neq (0,0)$. We deduce that $\lambda_0 > 0$. The optimality conditions being invariant by multiplication of a positive constant, we deduce the existence of a quintuplet satisfying \eqref{eq:opti_cond_oc1}-\eqref{eq:opti_cond_oc2}-\eqref{eq:opti_cond_oc3} and $\lambda_0=1$. Bounds of $|\lambda_1|$, $|\lambda_2|$, and $\| \nu \|_{L^1(0,T;\cR^{n_c})}$ directly follow from \eqref{eq:bound_lambda_2} and \eqref{eq:bound_lambda_1}.
Then we obtain a bound of $|p(T)|$ and finally a bound of $\| p \|_{L^\infty(0,T;\cR^n)}$ with Gr{\"o}nwall's lemma.
\end{proof}

\begin{definition} \label{def:costate}
Given a solution $(\bar{\gamma},\bar{v})$ to \eqref{eq:control_prob}, we call associated costate any $p$ for which there exists $(\lambda_0,\lambda_1,\lambda_2,\nu)$ such that the optimality conditions in Proposition \ref{prop:opti_cond} hold true and $\lambda_0= 1$.
\end{definition}

In order to obtain more regularity on $(\bar{\gamma},\bar{v})$, we need to express the optimal control as an auxiliary function of the state and costate, which is deduced from 
 Lemma \ref{lemma:aux_arg_min_ham}.

\begin{lemma}  \label{lem:bound-deriv-gamma-p} Let $(\bar \gamma, \vb)$ and $(p,\lambda_0,\lambda_1,\lambda_2,\nu)$ 
be as in Proposition \ref{prop_bound_p}.
There exists $C>0$ independent of $(\bar\gamma, \vb)$ and $(p, \lambda_0, \lambda_1, \lambda_2,\nu)$ such that
\bes 
\| \vb \|_{L^\infty(0,T;\cR^n)} \leq  C,\;\;\; \| \nu \|_{L^\infty(0,T;\cR^n)} \leq  C.
\ees
In addition, there exist constants $M_3>0$ and $M_4>0$,  such that
\begin{equation} \label{eq:bound_ultimate}
\| \dot{\bar{\gamma}} \|_{L^\infty(0,T;\cR^n)} \leq  M_3,\;\;\; \| \dot{p} \|_{L^\infty(0,T;\cR^n)} \leq M_4.
\end{equation}
\end{lemma}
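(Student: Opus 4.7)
The strategy is to identify $(\vb(t),\nu(t))$ with the unique solution of the finite-dimensional problem \eqref{eq:ham_oc} at state $\bar\gamma(t)$ and cotangent data $r(t) := P(t) + b(\bar\gamma(t))^\top p(t)$, and then transport all required bounds through the Lipschitz map from Lemma \ref{lemma:aux_arg_min_ham}(ii). Once $\vb$ and $\nu$ are bounded in $L^\infty$, the estimates on $\dot{\bar\gamma}$ and $\dot p$ follow directly from the state and adjoint equations.

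First, I will observe that the stationarity condition \eqref{eq:opti_cond_oc2} with $\lambda_0 = 1$, once transposed, reads
\begin{equation*}
D_v L(\bar\gamma(t),\vb(t))^\top + r(t) + D_v c(\bar\gamma(t),\vb(t))^\top \nu(t) = 0,
\end{equation*}
while \eqref{eq:opti_cond_oc3} gives $\nu(t)\ge 0$ and $\langle \nu(t), c(\bar\gamma(t),\vb(t))\rangle = 0$. By the uniqueness part of Lemma \ref{lemma:aux_arg_min_ham}(i), this forces $\vb(t) = v[\bar\gamma(t),r(t)]$ and $\nu(t) = \nu[\bar\gamma(t),r(t)]$ for a.e.\@ $t \in (0,T)$.

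Next I will check that $(\bar\gamma(t), r(t))$ stays in a fixed compact set $\bar B(R)$ independent of $(m,P,x_0)$. Proposition \ref{prop:bound_control_pb_gamma} gives $\|\bar\gamma\|_\infty \le M_1$; assumption \hypBound-\hypBoundPsi combined with \eqref{eq:bound-P-psi} gives $\|P\|_\infty \le \sup_\theta|\psi(\theta)|$; \hypBound-\hypBoundA bounds $b(\bar\gamma(t))$; and Proposition \ref{prop_bound_p} bounds $\|p\|_\infty \le M_2$. Hence $|r(t)|$ is bounded uniformly. By the local Lipschitz continuity (hence boundedness on compact sets) from Lemma \ref{lemma:aux_arg_min_ham}(ii), we obtain a constant $C$ with $\|\vb\|_{L^\infty} \le C$ and $\|\nu\|_{L^\infty}\le C$, both independent of $(\bar\gamma,\vb)$ and the associated multipliers.

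Finally, the state equation $\dot{\bar\gamma} = a(\bar\gamma) + b(\bar\gamma)\vb$, together with \hypReg-\hypRegA (which makes $a,b$ continuous, hence bounded on $\bar B(M_1)$) and the $L^\infty$ bound on $\vb$, yields $M_3$ with $\|\dot{\bar\gamma}\|_{L^\infty}\le M_3$. For $\dot p$, I use the adjoint equation \eqref{eq:opti_cond_oc1} with $\lambda_0 = 1$: all four terms on the right-hand side are now uniformly bounded, namely $D_x L[t]$ by \hypBound-\hypBoundL together with the new bound $|\vb(t)|\le C$, $D_x f[t]$ by \hypBound-\hypBoundF, the product $p(t)^\top(Da[t]+\sum Db_i[t]\vb_i(t))$ by \hypRegA (continuous derivatives on the compact set $\bar B(M_1)$) and the $L^\infty$ bounds on $p$ and $\vb$, and $\nu(t)^\top D_x c[t]$ by \hypBound-\hypBoundC together with the $L^\infty$ bound on $\nu$. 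This provides the constant $M_4$ with $\|\dot p\|_{L^\infty}\le M_4$.

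The only delicate point is the first step, namely invoking Lemma \ref{lemma:aux_arg_min_ham} to upgrade the $L^2$ bound on $\vb$ to an $L^\infty$ bound; everything else is a routine propagation of bounds through the state and adjoint ODEs, made possible precisely by this upgrade.
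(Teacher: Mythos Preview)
Your proposal is correct and follows exactly the paper's approach: identify $(\vb(t),\nu(t))$ with $(v[\bar\gamma(t),r(t)],\nu[\bar\gamma(t),r(t)])$ via the stationarity and complementarity conditions, invoke the Lipschitz continuity on compact sets from Lemma~\ref{lemma:aux_arg_min_ham}(ii) to get the $L^\infty$ bounds, and then read off the bounds on $\dot{\bar\gamma}$ and $\dot p$ from the state and adjoint equations. Your write-up is in fact more detailed than the paper's (which merely states that ``the estimates follow''); the only thing you might add for completeness is that $c(\bar\gamma(t),\vb(t))\le 0$ holds since $(\bar\gamma,\vb)\in\calk[x_0]$, so the pair $(\vb(t),\nu(t))$ genuinely satisfies all the conditions characterizing the unique solution in Lemma~\ref{lemma:aux_arg_min_ham}(i).
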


\begin{proof}
It follows from the optimality condition \eqref{eq:opti_cond_oc2} and Lemma \ref{lemma:aux_arg_min_ham} that
\begin{equation} \label{eq:reg_control}
\begin{array}{rl}
\bar{v}(t)
= & v[\bar{\gamma}(t),P(t)+ b(\bar{\gamma}(t))^\top p(t)], \\
\nu(t) = & \nu[\bar{\gamma}(t),P(t)+ b(\bar{\gamma}(t))^\top p(t)].
\end{array}
\end{equation}
Lemma \ref{lemma:aux_arg_min_ham} further implies that $\| \bar{v} \|_{L^\infty(0,T;\cR^m)} \leq C$ and 
$\| \nu \|_{L^\infty(0,T;\cR^{n_c})} \leq C$. The estimates \eqref{eq:bound_ultimate} follow.
\end{proof}

\section{Existence of MFGC equilibria}   \label{sec:existence}

In this section, we prove the main result of the paper.
We first construct the auxiliary function announced in the introduction. Then, applying Kakutani's fixed point theorem, we prove the existence of an auxiliary MFGC equilibrium  (defined in Subsection \ref{auxMFGCequ}), which will imply the existence of a Lagrangian one.

\subsection{Auxiliary function}  \label{subsec:aux-func}

We set $B= \bar{B}(M_1) \times \bar{B}(M_2) \subset \cR^n \times \cR^n$, where $M_1$ and $M_2$ are given by Lemma \ref{prop:bound_control_pb_gamma} and Proposition \ref{prop_bound_p}, respectively.

\begin{lemma} \label{lemma:aux_ex_un}
(i) Let $\mu \in \mathcal{P}(\cR^n \times \cR^n)$ be such that $\supp(\mu) \subseteq B$. There exists a unique $P \in \cR^m$ such that
\begin{equation} \label{eq:cond_auxil}
P= \psi \Big( \int_B v[x, P+ b(x)^\top q] \dd \mu(x,q) \Big),
\end{equation}
where $v[\cdot,\cdot]$ is the mapping introduced in Lemma \ref{lemma:aux_arg_min_ham}.
In the sequel, the unique solution to \eqref{eq:cond_auxil} is denoted by $\mathbf{P}[\mu]$.

(ii) The mapping $\mu \in \mathcal{P}(B) \mapsto \mathbf{P}[\mu]$ is uniformly continuous.
\end{lemma}

The mapping $\mathbf{P}$ is the auxiliary function which will allow us later to define a new notion of equilibrium. Let us motivate its introduction. In the above lemma, the probability measure $\mu$ represents the distribution of the agents with respect to their state and costate at a given time $t$. At equilibrium, an agent with state $x$ and costate $q$ utilizes the control $v[x,P(t)+b(x)^\top q]$, by Pontryagin's principle. Therefore the price relation \eqref{eq:def-m-P-eta-Lagrangian} leads to the fixed point equation $P(t)= \psi(\int_B v[x,P(t)+b(x)^\top q] \dd \mu(x,q))$ introduced above.

\begin{proof}[Proof of Lemma \ref{lemma:aux_ex_un}]
Let us first prove the existence of a solution. Let $C>0$ denote a bound of $|\psi|$ (Assumption \hypBound{}-\hypBoundPsi{}). Consider the map
\begin{equation*}
\chi \colon P \in \bar{B}_{\cR^m}(C) \mapsto \psi \Big( \int_B v[x, P + b(x)^\top q] \dd \mu(x,q) \Big) \in \bar{B}_{\cR^m}(C).
\end{equation*}
By Lemma \ref{lemma:aux_arg_min_ham}, the mapping $v[\cdot,\cdot]$ is continuous. Therefore, by the Schauder fixpoint theorem, there exists $P \in \bar{B}_{\cR^m}(C)$ such that $P= \chi(P)$, which proves the existence of a solution to \eqref{eq:cond_auxil}.

Let us prove next the uniqueness and the uniform continuity.
Let $\mu_1$ and $\mu_2$ be in $\mathcal{P}(B)$. Let $P_1$ and $P_2$ denote two solutions of \eqref{eq:cond_auxil}, for $\mu= \mu_1$ and $\mu=\mu_2$, respectively. For $j=1,2$, consider the maps
\begin{equation*}
v_j \colon (x,q) \in B \mapsto v[x, P_j + b(x)^\top q ].
\end{equation*}
Note that by construction, $P_j = \psi \big( \int_B v_j \dd \mu_j \big)$.
Let us first note that there exists a constant $C>0$, independent of $\mu_1$, $\mu_2$, $v_1$, and $v_2$ such that
\begin{equation*}
\| v_j \|_{L^\infty(B)}  \leq C \quad \text{and} \quad
\text{$v_j$ is $C$-Lipschitz}.
\end{equation*}
This is a consequence of Assumption \hypReg{}-\hypRegA{} and Lemma \ref{lemma:aux_arg_min_ham}. 
This implies, together with the monotonicity of $\psi$ (Assumption \hypConv{}-\hypConvPsi{}) that
\begin{align*}
0 \leq \ & \langle \psi({\textstyle \int_B} v_2 \dd \mu_2 ) - \psi({\textstyle \int_B} v_1 \dd \mu_2) , {\textstyle \int_B} (v_2 - v_1) \dd \mu_2 \rangle \\
= \ & \underbrace{\langle P_2 - P_1, {\textstyle \int_B} (v_2- v_1) \dd \mu_2 \rangle}_{=: (a)}
+
\underbrace{\langle \psi({\textstyle \int_B} v_1 \dd \mu_1) - \psi({\textstyle \int_B} v_1 \dd \mu_2), {\textstyle \int_B} (v_2- v_1) \dd \mu_2 \rangle}_{=: (b)}.
\end{align*}
Lemma \ref{lemma:aux_arg_min_ham} yields
\begin{align*}
(a) \leq \ &
\int_B \langle v_2(x,q)-v_1(x,q), (P_2 + b(x)^\top q)-(P_1 + b(x)^\top q) \rangle\dd \mu_2(x,q) \\
\leq \ & - \frac{1}{C} \int_B |v_2(x,q)-v_1(x,q)|^2 \dd \mu_2(x,q).
\end{align*}
Let $C$ denote a bound of $\| v_1 \|_{L^\infty(B)}$. Since $\psi$ is continuous, it is uniformly continuous on $B$. Therefore, there exists a function $\omega \colon [0,\infty) \rightarrow [0,\infty)$ such that for all $x$ and $y$ in $\bar{B}_{\cR^m}(C)$, $|\psi(y)-\psi(x)| \leq \omega(|y-x|)$, such that $\omega(0)=0$ and such that $\omega$ is right-continuous at 0.
We have
\begin{equation*}
(b) \leq \omega \big( | {\textstyle \int_B} v_1 \dd \mu_1-  {\textstyle \int_B} v_1 \dd \mu_2 | \big)  \Big|  \int_B (v_2 -v_1) \dd \mu_2 \Big|. 
\end{equation*}
Using further the Lipschitz continuity of $v_1$ and Cauchy-Schwarz inequality, we deduce that
\begin{equation*}
(b) \leq \omega( C d_1(\mu_1,\mu_2)) \, \Big( \int_B |v_2-v_1|^2 \dd \mu_2 \Big)^{1/2}.
\end{equation*}
Since $0 \leq (a) + (b)$, we deduce that
\begin{equation*}
\int_B |v_2- v_1 |^2 \dd \mu_2
\leq \Big( C \omega( C d_1(\mu_1,\mu_2)) \Big)^2 =: \tilde{\omega}(d_1(\mu_1,\mu_2)).
\end{equation*}
Finally, we have
\begin{align*}
|P_2-P_1 |
\leq \ & \omega\big( |{\textstyle \int_B} v_2 \dd \mu_2 - {\textstyle \int_B}  v_1 \dd \mu_1 | \big) \\
\leq \ & \omega\big( |{\textstyle \int_B} v_2-v_1 \dd \mu_2| + |{\textstyle \int_B} v_1 \dd \mu_2- {\textstyle \int_B} v_1 \dd \mu_1 | \big) \\
\leq \ & \omega( \tilde{\omega}(d_1(\mu_1,\mu_2) + C d_1(\mu_1,\mu_2)).
\end{align*}
If $\mu_1= \mu_2$, then $P_1= P_2$. The uniqueness of the solution to \eqref{eq:cond_auxil} follows. The uniform continuity of $P$ also follows, which concludes the proof.
\end{proof}

\color{black}

\subsection{Auxiliary MFGC equilibria}\label{auxMFGCequ}

In order to analyze the existence of Lagrangian MFGC equilibria, 
we propose here a new notion of equilibrium, that we call auxiliary equilibrium. 
 We set
\begin{equation*}
\tilde{\Gamma}= H^1(0,T;\cR^n) \times H^1(0,T;\cR^n).
\end{equation*}
We equip $\tilde{\Gamma}$ with the supremum norm, defined by 
$ 
\max( \| \gamma \|_\infty, \| p \|_\infty)
$ 
for a given pair $(\gamma,p) \in \tilde{\Gamma}$. We denote it (by extension) $\| (\gamma,p) \|_\infty$.
For any $x_0 \in \cR^n$, we define
\begin{equation*}
\tilde{\Gamma}[x_0]= \big\{ (\gamma,p) \in \tilde{\Gamma} \,:\, \gamma(0)= x_0 \big\}.
\end{equation*}
Given $t \in [0,T]$, we consider the mappings $\tilde{e}_t \colon \tilde{\Gamma}\to\cR^n$ and $\hat{e}_t \colon \tilde{\Gamma}\to\cR^n\times \cR^n$ defined by
$
\tilde{e}_t(\gamma,p)= \gamma(t)
$
and
$
\hat{e}_t(\gamma,p)= (\gamma(t),p(t))
$, for all $(\gamma,p) \in \tilde{\Gamma}$. We denote
\begin{equation*}
\mathcal{P}_{m_0}(\tilde{\Gamma}) = \big\{ \kappa \in \mathcal{P}_1(\tilde{\Gamma}) :\, \tilde{e}_0 \sharp \kappa = m_0 \big\}.
\end{equation*}

We consider the following compact subset of $\tilde{\Gamma}$, 
\be
\tilde{\Gamma}_B:=\{(\gamma, p)\in \tilde{\Gamma} : \|\gamma\|_\infty\leq M_1, \;\|p\|_\infty\leq M_2,\; \|\dot{\gamma}\|_2\leq T^\frac{1}{2}M_3, \;\|\dot{p}\|_2\leq T^\frac{1}{2}M_4\},   \label{eq:Gamma_B}
\ee
where $M_1$, $M_2$, $M_3$ and $M_4$ were introduced in Proposition \ref{prop:bound_control_pb_gamma}, Proposition \ref{prop_bound_p} and Lemma \ref{lem:bound-deriv-gamma-p}.

Given a distribution $\kappa\in\calp_1(\tilde{\Gamma})$ with $\supp(\kappa)\subset \tilde{\Gamma}_B$, we set for $t\in [0,T]$
$$
\mt^\kappa_t=\tilde{e}_t\sharp\kappa \in\calp(\cR^n), \quad \mu^\kappa_t=\hat{e}_t\sharp\kappa \in\calp(\cR^n\times\cR^n).
$$

\begin{lemma}   \label{lem:m-mu-hol}
Let $\kappa \in \calp_{m_0}(\tilde{\Gamma})$ with $\supp(\kappa)\subset \tilde{\Gamma}_B$. Then $\mt^\kappa_t$ and $\mu^\kappa_t$ are $\frac{1}{2}$-H\"older continuous w.r.t.\@ $t \in [0,T]$. 
\end{lemma}

\begin{proof} Recalling that $B=\overline{B}(M_1)\times \overline{B}(M_2)\subset \cR^n\times\cR^n$, since $\supp(\kappa)\subset \tilde{\Gamma}_B$, we obtain $\supp(\mu^\kappa_t)\subset B$ and $\supp(\mt^\kappa_t)\subset \bar{B}(M_1)$ for all $t\in [0,T]$. 

For all $s,t\in [0,T]$ we have
\begin{align*}
d_1(\mu^\kappa_t, \mu^\kappa_s)=&\ \sup_{\varphi\in {\rm Lip}_1(\cR^n\times\cR^n)}  \int_B\varphi(x,q)(\dd \mu^\kappa_t-\dd \mu^\kappa_s)(x,q)\\[6pt]
\;=&\ \sup_{\varphi\in {\rm Lip}_1(\cR^n\times\cR^n)}\int_\Gamma[\varphi(\gamma(t), p(t))-\varphi(\gamma(s), p(s))]\dd \kappa(\gamma, p)\\[6pt]
\;\leq & \ \int_\Gamma\max\{|\gamma(t)-\gamma(s)|,| p(t)- p(s)|\} \dd \kappa(\gamma, p)\\[6pt]
\;\leq& \ T^\frac{1}{2}\max\{M_3,M_4\}|t-s|^\frac{1}{2}.
\end{align*}
The last inequality holds by the assumption $\supp(\kappa)\subset \tilde{\Gamma}_B$. Similarly the result follows for $\mt^\kappa_t$.
\end{proof}

Given $\kappa\in \calp(\tilde{\Gamma}_B)$, by  the above lemma, we obtain $\mt^\kappa\in C([0,T];\calp_1(\cR^n))$. 
Setting $\tilde{P}^\kappa\in L^\infty(0,T:\cR^m)$ given by
\be\label{eq:def-tilde-P-kappa}
\tilde{P}^\kappa(t)={\bf P}[\mu^\kappa_t],
\ee 
where $\mathbf{P}$ is defined in Lemma \ref{lemma:aux_ex_un}, by Lemma \ref{lemma:aux_ex_un} and Lemma \ref{lem:m-mu-hol}, we obtain $\tilde{P}^\kappa\in C(0,T;\cR^m)$.  
 Defining the 
functional $\tilde{J}^{\kappa}:=J[\mt^\kappa, \tilde{P}^\kappa]$,  
we can consider the set of optimal trajectories and associated adjoint states $\tilde{\Gamma}^\kappa[x_0]$ 
given by
\begin{equation*}
\tilde{\Gamma}^\kappa[x_0]
= \Big\{ (\bar\gamma,p) \in\tilde{\Gamma}[x_0]\,:\, \bar\gamma\in \Gamma[\mt^\kappa, \tilde{P}^\kappa,x_0]
\text{ and }
p \text{ is an associated costate with }\bar{\gamma} \Big\},
\end{equation*}
where the meaning of ``associated costate" is given in Definition \ref{def:costate} and $\Gamma[\mt^\kappa, \tilde{P}^\kappa,x_0]$
 was defined in \eqref{def-gam-m-p0-x}.

\begin{definition}
  A measure $ \kappa\in \mathcal{P}_{m_0} (\tilde{\Gamma})$
  is an {\it auxiliary MFGC equilibrium} if
\bes
\supp(\kappa)\subset \bigcup_{x\in\supp(m_0)}\tilde{\Gamma}^{\kappa}[x].
\ees
\end{definition}

\if{
Given a distribution $\kappa\in\calp_1(\tilde{\Gamma})$, we consider the cost functional $\tilde{J}^{\kappa}:=J[\mt^\kappa, \tilde{P}^\kappa]$, 
where $\mt^\kappa_t=\tilde{e}_t\sharp\kappa$, for $t\in[0,T]$ 
and where $\tilde{P}^\kappa(t)$ is constructed as an auxiliary function of the
distribution $\hat{e}_t\sharp\kappa$
at time
$t$
in Lemma  \ref{lemma:aux_ex_un}. The well-posedness of $\tilde{J}^\kappa$ is established in Remark \ref{rem:def-P-kappa}.
Once $\tilde{P}^\kappa$ has been defined, we can consider the set of optimal trajectories and associated adjoint states $\tilde{\Gamma}^\kappa[x_0]$ defined by
\begin{equation*}
\tilde{\Gamma}^\kappa[x_0]
= \Big\{ (\bar\gamma,p) \in\tilde{\Gamma}[x_0]\,:\, \bar\gamma\in \Gamma[\mt^\kappa, \tilde{P}^\kappa,x_0]
\text{ and }
p \text{ costate associated with }\bar{\gamma} \Big\}.
\end{equation*}
The precise meaning of ``associated costate" will be given in Definition \ref{def:costate}.

\begin{definition}
  A measure $ \kappa\in \mathcal{P}_{m_0} (\tilde{\Gamma})$
  is an {\it auxiliary MFGC equilibrium} if
\bes
\supp(\kappa)\subset \bigcup_{x\in\supp(m_0)}\tilde{\Gamma}^{\kappa}[x].
\ees
\end{definition}}
}\fi

We now establish the relationship between the notion of Lagrangian and auxiliary MFGC equilibria. Given $\kappa\in\calp(\tilde{\Gamma}_B)$, let $V^\kappa \colon \tilde{\Gamma}\to \controlset$ be defined by
\bes
V^\kappa(\gamma, p)=v\left[\gamma, \tilde{P}^\kappa+b(\gamma)^\top  p\right],
\ees
where the r.h.s.\@ is the Nemytskii operator associated with the auxiliary mapping introduced in Lemma \ref{lemma:aux_ex_un}.  
Let $\pi_1 \colon \tilde{\Gamma}\to\Gamma$ be such that $\pi_1(\gamma, p)=\gamma$. Then, we define $\eta[\kappa]=\left(\pi_1, V^\kappa\right)\sharp \kappa\in\calp_1(\Gamma\times\controlset)$.

\begin{lemma}
Let $\kappa\in\calp(\tilde{\Gamma})$ be an auxiliary MFGC equilibrium. Then, $\eta[\kappa]\in\calp_1(\Gamma\times\controlset)$ is a Lagrangian MFGC equilibrium.
\end{lemma}

\begin{proof} 
For the sake of simplicity we note $\eta$ instead of $\eta[\kappa]$. The main point is to prove that $\tilde{P}^\kappa= P^\eta$, where $P^\eta$ was introduced in \eqref{eq:def-m-P-eta-Lagrangian}. By the definition of $\eta$, it is supported on regular curves, thus
\begin{equation*}
\ba{lll}
P^\eta(t) &=& \psi \Big( \int_{\Gamma\times\controlset} v(t) \dd\eta(\gamma,v) \Big)\\[6pt]
\;&=& \psi \Big( \int_{\tilde{\Gamma}} v\left[\gamma(t), \tilde{P}^\kappa(t)+b(\gamma(t))^\top  p(t)\right]\dd \kappa(\gamma,p) \Big)\\[6pt]
\;&=& \psi \Big( \int_{\cR^n\times\cR^n} v\left[x, \tilde{P}^\kappa(t)+b(x)^\top  q\right]\dd \mu^\kappa_t(x,q)\Big) = \tilde{P}^\kappa(t).
\ea 
\end{equation*}
The last equality follows from \eqref{eq:cond_auxil} and \eqref{eq:def-tilde-P-kappa}. It is clear that $\mt^\kappa=m^\eta$, then $J^\eta=\tilde{J}^\kappa$ ($J^\eta$ is defined in Section \ref{subsec:LagrangianMFGCequilibria}). Since $\kappa$ is an auxiliary MFGC equilibrium, any $(\bar\gamma, \pb)\in \supp(\kappa)$ defines an optimal pair $(\bar\gamma, V^\kappa(\bar\gamma, \pb))$ for $J^\eta$. We conclude that $\eta$ is a Lagrangian MFGC equilibrium.
\end{proof}

In Section \ref{subsec:ExistenceResults} we show the existence of auxiliary MFGC equilibria, applying  Kakutani's fixed point theorem. The next technical section provides some convergence results to prove that the assumptions of  Kakutani's theorem hold.

\subsection{Convergence properties}

\begin{lemma}   \label{lem:conv-mu-P}
Let $(\kappa^i)_{i\in \NN}$ be a sequence contained in  $\calp_1(\tilde{\Gamma})$ 
 such that ${\rm supp}(\kappa^i) \subset \tilde{\Gamma}_B$ for all $i \in \NN$. Assume that $(\kappa^i)_{i \in \NN}$ narrowly converges to $\kappa$. Then, 
\bes
\sup_{t\in[0,T]} d_1(\mt^{\kappa^i}_t,\mt^\kappa_t)\rar 0\;\;\;\;\;\;\text{and}\;\;\;\;\;\sup_{t\in[0,T]}d_1(\mu^{\kappa^i}_t,\mu^\kappa_t)\rar 0.
\ees
\end{lemma}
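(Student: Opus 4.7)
\medskip

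My plan is to deduce both uniform convergences from the dual (Kantorovich-Rubinstein) formulation of $d_1$ recalled in Section \ref{sec:preliminaries}, combined with an Arzel\`a-Ascoli argument on the compact support $K$.

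Fix the 1-Lipschitz test class $\Lip_1(\cR^n)=\{f\colon \cR^n\to\cR:\ \text{1-Lipschitz, }f(0)=0\}$, which one may assume normalized without loss of generality. For every $t\in[0,T]$ and every $f\in\Lip_1(\cR^n)$, push-forward gives
\begin{equation*}
\int_{\cR^n} f\,\dd\mt^\kappa_t-\int_{\cR^n} f\,\dd\mt^{\kappa^i}_t
=\int_{\tilde\Gamma}(f\circ\tilde e_t)\,\dd\kappa-\int_{\tilde\Gamma}(f\circ\tilde e_t)\,\dd\kappa^i,
\end{equation*}
so taking the sup over $f$ and $t$ one gets $\sup_t d_1(\mt^{\kappa^i}_t,\mt^\kappa_t)=\sup_{\phi\in\calf}\bigl|\int_K\phi\,\dd\kappa^i-\int_K\phi\,\dd\kappa\bigr|$, where $\calf:=\{f\circ\tilde e_t\restriction_K\,:\,f\in\Lip_1(\cR^n),\ t\in[0,T]\}\subset C(K)$. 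The same reformulation handles $\mu^{\kappa^i}_t=\hat e_t\sharp \kappa^i$ with test functions $f\in\Lip_1(\cR^n\times\cR^n)$ and $\hat e_t$ in place of $\tilde e_t$, so it suffices to prove the uniform convergence on $K$ for both families.

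The key observation is that $\calf$ is precompact in $C(K)$. Since $K$ is compact in $\tilde\Gamma$ for the supremum norm, there exists $M>0$ such that $\|(\gamma,p)\|_\infty\leq M$ for all $(\gamma,p)\in K$. Hence every $\phi=f\circ\tilde e_t\in\calf$ satisfies $|\phi(\gamma,p)|=|f(\gamma(t))|\leq |\gamma(t)|\leq M$, so $\calf$ is uniformly bounded. Moreover, for all $(\gamma_1,p_1),(\gamma_2,p_2)\in K$,
\begin{equation*}
|\phi(\gamma_1,p_1)-\phi(\gamma_2,p_2)|=|f(\gamma_1(t))-f(\gamma_2(t))|\leq\|\gamma_1-\gamma_2\|_\infty\leq\|(\gamma_1,p_1)-(\gamma_2,p_2)\|_\infty,
\end{equation*}
uniformly in $f$ and $t$, and the analogous bound with constant $1$ holds for functions of the form $f\circ\hat e_t$ with $f\in\Lip_1(\cR^n\times\cR^n)$. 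Therefore $\calf$ is uniformly equicontinuous on $K$. By the Arzel\`a-Ascoli theorem, $\calf$ is relatively compact in $C(K)$.

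Finally I would invoke the standard fact that if $\kappa^i\to\kappa$ narrowly with all supports in the common compact $K$, then $\int\phi\,\dd\kappa^i\to\int\phi\,\dd\kappa$ uniformly in $\phi$ over any relatively compact subset of $C(K)$. This is a one-line contradiction argument: if it failed, one could extract $\phi_k\to\phi_\infty$ uniformly on $K$ and $\kappa^{i_k}$ such that $|\int\phi_k\,\dd(\kappa^{i_k}-\kappa)|\geq\varepsilon$, but
\begin{equation*}
\Big|\int\phi_k\,\dd\kappa^{i_k}-\int\phi_\infty\,\dd\kappa\Big|\leq\|\phi_k-\phi_\infty\|_{C(K)}+\Big|\int\phi_\infty\,\dd(\kappa^{i_k}-\kappa)\Big|\longrightarrow 0,
\end{equation*}
and similarly $\int\phi_k\,\dd\kappa\to\int\phi_\infty\,\dd\kappa$, yielding a contradiction. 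Applied to our relatively compact family $\calf$, this delivers $\sup_t d_1(\mt^{\kappa^i}_t,\mt^\kappa_t)\to 0$, and the identical argument with $\hat e_t$ gives $\sup_t d_1(\mu^{\kappa^i}_t,\mu^\kappa_t)\to 0$. The only mildly subtle step is the Arzel\`a-Ascoli reduction; once $\calf$ is precompact, narrow convergence on a compact support does the rest.
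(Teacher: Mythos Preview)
Your proof is correct, but it takes a more elaborate route than the paper's. Both arguments rest on the same elementary observation---that for $\varphi\in\Lip_1(\cR^n\times\cR^n)$ the map $(\gamma,p)\mapsto\varphi(\gamma(t),p(t))$ is $1$-Lipschitz on $\tilde\Gamma$, uniformly in $t$---but they exploit it differently. The paper reads this directly as a contraction in the Monge--Kantorovich distance, obtaining the quantitative bound
\[
\sup_{t\in[0,T]} d_1(\mu^{\bar\kappa}_t,\mu^{\hat\kappa}_t)\leq d_1(\bar\kappa,\hat\kappa),
\]
and then simply invokes the fact (\cite[Proposition~7.1.5]{ambrosio2008gradient}, recalled in Section~\ref{sec:preliminaries}) that narrow convergence on a common compact support implies $d_1$-convergence. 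You instead reinterpret the $1$-Lipschitz bound as equicontinuity of the test-function family $\calf$ on $K$, apply Arzel\`a--Ascoli, and then run a subsequence argument to upgrade narrow convergence to uniform convergence over precompact subsets of $C(K)$. This is a legitimate and self-contained argument, and it has the virtue of not appealing to the metrization result for $d_1$ as a black box; on the other hand, the paper's two-line contraction estimate is shorter and yields an explicit Lipschitz dependence of the marginals on $\kappa$, which your compactness argument does not.
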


\begin{proof}
We start proving that for any $\bar\kappa, \hat\kappa\in \calp_1(\tilde{\Gamma}_B)$ we have
\be \label{eq:continuity_marg}
\sup_{t\in[0,T]} d_1(m^{\bar\kappa}_t,m^{\hat\kappa}_t)\leq  d_1(\bar\kappa, \hat\kappa), \;\;\;\;\;{\rm and }\;\;\;\;\;\sup_{t\in[0,T]}d_1(\mu^{\bar\kappa}_t,\mu^{\hat\kappa}_t)\leq  d_1(\bar\kappa, \hat\kappa).
\ee
We show the result for $\mu^{\bar\kappa}_t$ and $\mu^{\hat\kappa}_t$, and then the result for $m^{\bar\kappa}_t$ and $m^{\hat\kappa}_t$ is straightforward. 
By the Kantorovich-Rubinstein formula, for any $t\in [0,T]$ we have
\begin{align*}
d_1(\mu^{\bar\kappa}_t,\mu^{\hat\kappa}_t)=&\ \sup_{\varphi \in {\rm Lip}_1(\cR^n\times\cR^n)}\int_{\cR^n\times\cR^n}\varphi(x,q)\dd(\mu^{\bar\kappa}_t-\mu^{\hat\kappa}_t)(x,q)\\
\;=& \ \sup_{\varphi\in {\rm Lip}_1(\cR^n\times\cR^n)}\int_{\tilde{\Gamma}}\varphi(\gamma(t),p(t))\dd(\bar\kappa-\hat\kappa)(\gamma, p) \ \leq \  \ d_1(\bar\kappa, \hat\kappa).
\end{align*}
In the last inequality we use the fact that given $\varphi\in {\rm Lip}_1(\cR^n\times\cR^n)$, the mapping $(\gamma, p)\mapsto \varphi(\gamma(t),p(t))$ belongs to ${\rm Lip}_1(\tilde{\Gamma})$, for all $t\in [0,T]$. 

Since $(\kappa^i)_{i \in \NN}\subset \calp_1(\tilde{\Gamma}_B)$ narrowly converges to $\kappa$, by \cite[Proposition 7.1.5]{ambrosio2008gradient}, 
we obtain $d_1(\kappa^i, \kappa)\to 0$. The conclusion follows with \eqref{eq:continuity_marg}. 
\end{proof}


\if{
\begin{lemma}   \label{lem:m-mu-hol}
Let $\kappa \in \calp_{m_0}(\tilde{\Gamma})$ with $\supp(\kappa)\subset \tilde{\Gamma}_B$. Then $\mt^\kappa_t$ and $\mu^\kappa_t$ are $\frac{1}{2}$-H\"older continuous w.r.t.\@ $t \in [0,T]$. 
\end{lemma}

\begin{proof} Recalling that $B=\overline{B}(0,M_1)\times \overline{B}(0,M_2)\subset \cR^n\times\cR^n$, since $\supp(\kappa)\subset \tilde{\Gamma}_B$, we obtain $\supp(\mu^\kappa_t)\subset B$ and $\supp(\mt^\kappa_t)\subset \bar{B}(M_1)$ for all $t\in [0,T]$. 

For all $s,t\in [0,T]$ we have
\begin{align*}
d_1(\mu^\kappa_t, \mu^\kappa_s)=&\ \sup_{\varphi\in {\rm Lip}_1(\cR^n\times\cR^n)}  \int_B\varphi(x,q)(\dd \mu^\kappa_t-\dd \mu^\kappa_s)(x,q)\\[6pt]
\;=&\ \sup_{\varphi\in {\rm Lip}_1(\cR^n\times\cR^n)}\int_\Gamma[\varphi(\gamma(t), p(t))-\varphi(\gamma(s), p(s))]\dd \kappa(\gamma, p)\\[6pt]
\;\leq & \ \int_\Gamma\max\{|\gamma(t)-\gamma(s)|,| p(t)- p(s)|\} \dd \kappa(\gamma, p)\\[6pt]
\;\leq& \ T^\frac{1}{2}\max\{M_3,M_4\}|t-s|^\frac{1}{2}.
\end{align*}
The last inequality holds by the assumption $\supp(\kappa)\subset \tilde{\Gamma}_B$.
\end{proof}

\begin{remark}    \label{rem:def-P-kappa}
Given $\kappa\in \calp(\tilde{\Gamma}_B)$, by  Lemma \ref{lem:conv-mu-P}, we obtain $\mt^\kappa\in C([0,T];\calp_1(\cR^n))$. 
Setting 
$
\tilde{P}^\kappa(t)=\mathbf{P}[\mu^\kappa_t]
$, 
where $\mathbf{P}$ is defined in Lemma \ref{lemma:aux_ex_un}, by Lemma \ref{lemma:aux_ex_un} and {\color{blue}Lemma \ref{lem:conv-mu-P}}, we obtain $\tilde{P}^\kappa\in C(0,T;\cR^m)$. Therefore, the definition of $\tilde{J}^\kappa$ in 
section \ref{auxMFGCequ} 
makes sense. 
\end{remark}
}\fi

\begin{lemma}
Let $(\kappa^i)_{i\in\NN}\subset \calp_{m_0}(\tilde{\Gamma})$, $\kappa\in \calp_{m_0}(\tilde{\Gamma})$ be such that ${\rm supp}(\kappa^i)\subset \tilde{\Gamma}_B$ for all $i$ and $\supp(\kappa)\subset \tilde{\Gamma}_B$. Assume that $\kappa^i$ narrowly converges to $\bar\kappa$. Let $(x_i)_{i\in\NN}\subset K_0$ be a sequence such that $x_i\to \xb$ and let 
$(\gamma_i,p_i)_{i\in\NN}\subset \tilde{\Gamma}^{\kappa^i}[x_i]$ 
(defined in section \ref{auxMFGCequ})
be a sequence such that $(\gamma_i, p_i)\to (\bar\gamma, \pb)$ uniformly on $[0,T]$. Then $(\bar\gamma, \pb)\in \tilde{\Gamma}^{\bar\kappa}[\xb]$. 
\label{lemma:closed-graph}
\end{lemma}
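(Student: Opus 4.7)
The plan is to show in turn that (a) the couplings $(\tilde m^{\kappa^i},\tilde P^{\kappa^i})$ converge to $(\tilde m^{\bar\kappa},\tilde P^{\bar\kappa})$ uniformly in time, (b) the associated optimal controls $\bar v_i$ of the players $(\gamma_i,p_i)$ converge uniformly to a limit $\bar v$ and the limit pair $(\bar\gamma,\bar v)$ is feasible for $\mathcal{K}[\bar x]$, (c) $(\bar\gamma,\bar v)$ is optimal for $\tilde J^{\bar\kappa}$ over $\mathcal{K}[\bar x]$, and (d) $\bar p$ is an associated costate. The main obstacle is (c): the constraint set $\mathcal{K}[x_i]$ moves with $i$, so one cannot simply test optimality of $(\gamma_i,\bar v_i)$ against a fixed competitor.

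\textbf{Step 1 (convergence of couplings).} Since $\supp(\kappa^i)\subset\tilde\Gamma_B$, Lemma~\ref{lem:conv-mu-P} gives $\sup_t d_1(\tilde m^{\kappa^i}_t,\tilde m^{\bar\kappa}_t)\to 0$ and $\sup_t d_1(\mu^{\kappa^i}_t,\mu^{\bar\kappa}_t)\to 0$. Combined with the uniform continuity of $\mathbf{P}$ on $\mathcal{P}(B)$ (Lemma~\ref{lem:cont-P-kappa}), $\tilde P^{\kappa^i}\to\tilde P^{\bar\kappa}$ uniformly on $[0,T]$.

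\textbf{Step 2 (convergence of controls and feasibility).} Each $(\gamma_i,p_i)\in\tilde\Gamma^{\kappa^i}[x_i]$ has an associated optimal control $\bar v_i$; by the stationarity condition \eqref{eq:opti_cond_oc2} and Lemma~\ref{lem:bound-deriv-gamma-p}, $\bar v_i(t)=v[\gamma_i(t),\tilde P^{\kappa^i}(t)+b(\gamma_i(t))^\top p_i(t)]$. Since $(\gamma_i,p_i)\to(\bar\gamma,\bar p)$ uniformly, $\tilde P^{\kappa^i}\to\tilde P^{\bar\kappa}$ uniformly, and $v[\cdot,\cdot]$ is Lipschitz on bounded sets (Lemma~\ref{lemma:aux_arg_min_ham}(ii)), the sequence $\bar v_i$ converges uniformly to
\[
\bar v(t)=v[\bar\gamma(t),\tilde P^{\bar\kappa}(t)+b(\bar\gamma(t))^\top \bar p(t)].
\]
Passing to the limit in $\dot\gamma_i=a(\gamma_i)+b(\gamma_i)\bar v_i$, in $\gamma_i(0)=x_i$, in $c(\gamma_i(t),\bar v_i(t))\leq 0$, and in $g_1(\gamma_i(T))=0$, $g_2(\gamma_i(T))\leq 0$ yields $(\bar\gamma,\bar v)\in\mathcal{K}[\bar x]$.

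\textbf{Step 3 (optimality of $(\bar\gamma,\bar v)$).} Let $(\gamma,v)\in\mathcal{K}[\bar x]$ be arbitrary. The key point is to build a sequence $(\gamma^i,v^i)\in\mathcal{K}[x_i]$ such that $(\gamma^i,v^i)\to(\gamma,v)$ in $C([0,T];\cR^n)\times L^2(0,T;\cR^m)$. I would do this in two steps. First, use the metric regularity Lemma~\ref{lemma:reg_metric_mixed} fiberwise in $t$ to produce a control $\tilde v^i(t)$ with $c(\gamma^{i,0}(t),\tilde v^i(t))\le 0$ and $|\tilde v^i(t)-v(t)|\le C|\gamma^{i,0}(t)-\gamma(t)|$, where $\gamma^{i,0}$ solves the state equation from $x_i$ with control $v$; Gr\"onwall then gives $\gamma^{i,0}\to\gamma$ uniformly. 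Iterating or using a fixed-point/contraction argument on $v^i$ produces a genuine feasible state--control pair with $\gamma^i(0)=x_i$ satisfying the mixed constraints. Second, to recover the terminal equality $g_1(\gamma^i(T))=0$ and the inequality $g_2(\gamma^i(T))\le 0$, use the qualification assumptions~\hypQualif{}\hypQualifEq{} and~\hypQualif{}\hypQualifIneq{} to add an $O(|x_i-\bar x|)$ correction to the control. Once this competitor is built,
\[
J[\tilde m^{\kappa^i},\tilde P^{\kappa^i}](\gamma_i,\bar v_i)\le J[\tilde m^{\kappa^i},\tilde P^{\kappa^i}](\gamma^i,v^i),
\]
and passing to the limit using Step~1, the uniform convergence of $(\gamma_i,\bar v_i)\to(\bar\gamma,\bar v)$, the $L^2$ convergence $v^i\to v$, and the continuity properties of $L$, $f$, $g_0$ on bounded sets yields $\tilde J^{\bar\kappa}(\bar\gamma,\bar v)\le \tilde J^{\bar\kappa}(\gamma,v)$.

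\textbf{Step 4 ($\bar p$ is a costate).} For each $i$, let $(p_i,1,\lambda_1^i,\lambda_2^i,\nu_i)$ be the optimality quintuplet normalized as in Proposition~\ref{prop_bound_p}. The proof of that proposition gives $|\lambda_1^i|+|\lambda_2^i|+\|\nu_i\|_{L^1}\le C$, while Lemma~\ref{lem:bound-deriv-gamma-p} gives $\|\nu_i\|_{L^\infty}\le C$ and $\|\dot p_i\|_{L^\infty}\le C$. Extract subsequences so that $(\lambda_1^i,\lambda_2^i)\to(\bar\lambda_1,\bar\lambda_2)$ in $\cR^{n_{g_1}+n_{g_2}}$ and $\nu_i\rightharpoonup^*\bar\nu$ in $L^\infty(0,T;\cR^{n_c})$. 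Pass to the limit in the adjoint equation~\eqref{eq:opti_cond_oc1}, the stationarity~\eqref{eq:opti_cond_oc2} (already verified by construction of $\bar v$), the terminal condition, and the sign and complementarity conditions~\eqref{eq:opti_cond_oc3}; the latter is closed under weak-$*$ convergence thanks to the uniform convergence of $\gamma_i,\bar v_i$. This shows $\bar p$ is a costate associated with $\bar\gamma$ in the sense of Definition~\ref{def:costate}, so $(\bar\gamma,\bar p)\in\tilde\Gamma^{\bar\kappa}[\bar x]$.
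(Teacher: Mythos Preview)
Your plan is correct and follows the same four-part architecture as the paper's proof. Two differences are worth noting.

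In Step~3, the paper does not build the competing sequence by hand. Instead it takes an \emph{optimal} pair $(\hat\gamma,\hat v)\in\mathcal{K}[\bar x]$ for $\tilde J^{\bar\kappa}$, observes that Robinson's constraint qualification holds at $\hat v$ thanks to \hypQualif{}\hypQualifEq{}--\hypQualifIneq{}, and invokes the Robinson--Ursescu stability theorem \cite[Theorem~2.87]{BS13} directly to produce $(\hat\gamma_i,\hat v_i)\in\mathcal{K}[x_i]$ with $\|\hat v_i-\hat v\|_\infty\to 0$. This sidesteps the iteration/contraction you sketch (which is sound but requires care: after adjusting $v$ to satisfy $c(\gamma^{i,0}(t),\cdot)\le 0$ the trajectory moves, and one must close the loop). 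Testing only against the optimal competitor is enough, and the abstract metric-regularity result handles mixed and terminal constraints in one stroke.

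In Step~4, you can avoid the weak-$*$ extraction for $\nu_i$: since $\nu_i(t)=\nu[\gamma_i(t),\tilde P^{\kappa^i}(t)+b(\gamma_i(t))^\top p_i(t)]$ and $\nu[\cdot,\cdot]$ is Lipschitz on bounded sets (Lemma~\ref{lemma:aux_arg_min_ham}), $\nu_i\to\bar\nu$ \emph{uniformly} on $[0,T]$, just as for $\bar v_i$. This makes passing to the limit in the adjoint equation and the pointwise complementarity conditions immediate. Your weak-$*$ route also works (nonnegativity and the product of weak-$*$ with uniform convergence yield $\int_0^T\langle\bar\nu,c(\bar\gamma,\bar v)\rangle\,\dd t=0$, hence the pointwise complementarity), but it is slightly heavier than needed.
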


\begin{proof}
We have to prove that there exists $\vb\in L^2(0,T;\cR^m)$ such that  $(\bar\gamma,\vb)\in \calk[\xb]$ and
$$
\tilde{J}^{\bar\kappa}(\bar\gamma, \vb)\leq \tilde{J}^{\bar\kappa}(\gamma, v) \;\;\;\;\forall (\gamma, v)\in\calk[\xb].
$$ 
In addition, we have to prove that $\pb$ is the costate associated with $(\bar\gamma, \vb)$, in the sense of Definition \ref{def:costate}.

Since  $(\gamma_i,p_i)_{i\in\NN}\subset \tilde{\Gamma}^{\kappa^i}[x_i]$,  there exists for all $i\in\NN$ a control $v_i\in L^2(0,T;\cR^m)$ such that $(\gamma_i, v_i)\in\calk[x_i]$ and $(\gamma_i, v_i)$ is optimal for $\tilde{J}^{\kappa^i}$. By Proposition \ref{prop:bound_control_pb_gamma}, since $(x_i)_{i\in\NN}\subset K_0$, we have $\|\gamma_i(t)\|_{\infty}\leq M_1$ and $\|v_i\|_2\leq C$, for all $i\in\NN$. Therefore, there exists $\vb\in L^2(0,T;\cR^m)$ such that, up to a subsequence,  $v_i\rightharpoonup\vb$.  By Lemma \ref{lem:bound-deriv-gamma-p}, the sequence $\left(\gamma_i\right)_{i\in\NN}$ is a bounded sequence in $H^1(0,T;\cR^n)$, since $\gamma_i\to\bar\gamma$ in $C(0,T;\cR^n)$, it follows that $\bar\gamma\in H^1$ and $\dot{\gamma}_i\rightharpoonup\dot{\bar\gamma}$ in $L^2(0,T;\cR^n)$. In addition, by \hypReg{}-\hypRegA{}, \hypBound{}-\hypBoundA{},  the uniform convergence of $\gamma_i$ to $\bar\gamma$ and the weak convergence  of $v_i$ to $\vb$ we obtain
$$
a(\gamma_i)+b(\gamma_i)v_i\rightharpoonup a(\bar\gamma)+b(\bar\gamma)\vb, \;\;\;\;\text{in }L^2(0,T;\cR^n),
$$
which implies that 
$
\dot{\bar\gamma}(t)=a(\bar\gamma(t))+b(\bar\gamma(t))\vb(t)$, for a.e.\@ $t\in (0,T).$
It is clear that $\bar\gamma(0)=\xb$.

Furthermore, for all $i\in\NN$ there exists $(\lambda^i_1, \lambda_2^i,\nu_i)\in\cR^{n_{g_1}}\times\cR^{n_{g_2}}\times L^\infty(0,T;\cR^{n_c})$ such that \eqref{eq:opti_cond_oc1}, \eqref{eq:opti_cond_oc2} and \eqref{eq:opti_cond_oc3} hold for $(\gamma_i, v_i, p_i)$ and $\lambda_0^i=1$. By the proof of Proposition \ref{prop_bound_p}, we obtain that $(\lambda_1^i, \lambda_2^i)_{i\in\NN}$ is bounded, then there exists a subsequence, still denoted $(\lambda_1^i, \lambda_2^i)_{i\in\NN}$, that converges to $(\bar\lambda_1, \bar\lambda_2)$. 

By Lemma \ref{lemma:aux_arg_min_ham} and \eqref{eq:opti_cond_oc2}, we deduce
$$
v_i(t)=v\left[\gamma_i(t), \tilde{P}^{\kappa^i}(t)+b(\gamma_i(t))^\top p_i(t) \right], \;\;\nu_i(t)=\nu\left[\gamma_i(t), \tilde{P}^{\kappa^i}(t)+b(\gamma_i(t))^\top p_i(t) \right].
$$
By our assumptions, Lemma \ref{lemma:aux_ex_un} and Lemma \ref{lem:conv-mu-P}, the sequences $(\gamma_i)$, $(p_i)$ and $(\tilde{P}^{\kappa^i})$ are bounded and they converge to $\bar\gamma$, $\pb$ and $\tilde{P}^{\bar\kappa}$, uniformly over $[0,T]$. By Lemma \ref{lemma:aux_arg_min_ham}, the mappings $v[\cdot, \cdot]$ and $\nu[\cdot, \cdot]$ are Lipschitz continuous over bounded sets, then
\be   \label{eq:unif-conv-vi-vbar}
v_i(t)\to \vb(t)=v\left[\bar\gamma(t), \tilde{P}^{\bar\kappa}(t)+b(\bar\gamma(t))^\top\pb(t)\right], 
\ee
and
\bes
\nu_i(t)\to \bar\nu(t),\;\text{where }\; \bar\nu(t)=\nu\left[\bar\gamma(t), \tilde{P}^{\bar\kappa}(t)+b(\bar\gamma(t))^\top\pb(t)\right],
\ees
uniformly over $[0,T]$. In addition by Lemma \ref{lem:conv-mu-P}, $\sup_{t\in[0,T]}d_1(\mt^{\kappa^i}_t, \mt^{\bar\kappa}_t)\to 0$. Therefore by  \hypReg{} and \hypBound{}, we can pass to the limit in \eqref{eq:opti_cond_oc1}. By similar arguments we can pass to the limit in \eqref{eq:opti_cond_oc2} and \eqref{eq:opti_cond_oc3}. Finally we can conclude that $(\pb, 1, \bar\lambda_1, \bar\lambda_2, \bar\nu)$ satisfies the adjoint equation, the stationary condition and the complementarity condition for $(\bar\gamma, \vb)$.

Now, we prove the optimality of $(\bar\gamma, \vb)$ for $\tilde{J}^{\bar\kappa}$. First we show that
\be \label{eq:ineq-gamma-gamma-i}
\tilde{J}^{\bar\kappa}(\bar\gamma,\vb)= \lim_{i\to\infty}\tilde{J}^{\kappa^i}(\gamma_i, v_i).
\ee
By  the uniform convergence of the sequence $(\gamma_i)$,  Lemma \ref{lem:conv-mu-P} and \hypReg{} we have
\bes
\int_0^T f\left(\gamma_i(t), \mt^{\kappa^i}_t\right)\dd t\rar \int_0^Tf\left(\bar\gamma(t), \mt^{\bar\kappa}_t\right)\dd t\;\;\text{and}\;\;g_0\left(\gamma_i(T), \mt^{\kappa^i}_T\right)\rar g_0\left(\bar\gamma(T), \mt^{\bar\kappa}_T\right).
\ees
Skipping the time arguments, we have 
\bes
\int_0^T\left[\la \tilde{P}^{\kappa^i},v_i\ra-\la \tilde{P}^{\bar\kappa}, \vb\ra\right]\dd t=\int_0^T\left[\la \tilde{P}^{\kappa^i}- \tilde{P}^{\bar\kappa},v_i\ra+\la  \tilde{P}^{\bar\kappa}, v_i- \vb\ra\right]\dd t.
\ees
By Lemma \ref{lemma:aux_ex_un}, Lemma \ref{lem:conv-mu-P}, the uniform convergence in \eqref{eq:unif-conv-vi-vbar} and the boundedness of the sequences $(\tilde{P}^{\kappa^i})$ and $(v_i)$ we conclude that 
$
\int_0^T\la \tilde{P}^{\kappa^i}, v_i\ra\dd t\rar \int_0^T \la \tilde{P}^{\bar\kappa}, \vb\ra\dd t.
$
By \hypReg{}-\hypRegL{} and the uniform convergence of $(\gamma_i)$ and $(v_i)$ to $\bar\gamma$ and $\vb$, respectively, we deduce that
$
\int_0^T L(\gamma_i, v_i)\dd t\to \int_0^T L(\bar\gamma,\vb)\dd t.
$
Combining the above estimates, \eqref{eq:ineq-gamma-gamma-i} follows.

Now, let $(\hat\gamma, \vh)\in \calk[\xb]$ be an optimal solution for $\tilde{J}^{\bar\kappa}$ and initial condition $\xb$. By \hypQualif{}\hypQualifEq{}-\hypQualifIneq{}, Robinson's constraint qualification (see \cite[(2.163)]{BS13}) holds at $\vh$. By \cite[Theorem 2.87]{BS13} and \hypReg{} we conclude that there exists a sequence $(\vh_i)_{i\in\NN}\subset  L^\infty(0,T;\cR^m)$ such that $\|\vh_i-\vh\|_\infty\to 0$, and the sequence $(\hat\gamma_i)_{i\in\NN}$, given by 
$$
\left\{
\ba{rl}
\dot{\hat\gamma}_i(t)=& \! \! a(\hat\gamma_i(t))+b(\hat\gamma_i(t))\vh_i(t), \;\;\; \text{for a.e. }t\in[0,T]\\[6pt]
\hat\gamma_i(0)=& \! \! x_i
\ea
\right.
$$
is such that $(\hat\gamma_i,\vh_i)\in \calk[x_i]$. In addition, by our assumptions and Gr\"onwall's Lemma we deduce that $(\hat\gamma_i)_{i\in\NN}$ is uniformly bounded in $L^\infty(0,T;\cR^n)$ and $\|\hat\gamma_i-\hat\gamma\|_\infty\to 0$. 

By the optimality of  $(\gamma_i, p_i)\in \tilde{\Gamma}^{\kappa^i}[x_i]$, we have
\be
\tilde{J}^{\kappa^i}(\gamma_i, v_i)\leq \tilde{J}^{\kappa^i}(\hat\gamma_i,\vh_i) \;\;\;\forall i\in\NN.    \label{eq:ineq-hat-gamma-i}
\ee
Since $\|\vh_i-\vh\|_\infty\to 0$ and $\|\hat\gamma_i-\hat\gamma\|_\infty\to 0$, arguing as above we obtain, 
\be
\lim_{i\to\infty}\tilde{J}^{\kappa^i}(\hat\gamma_i,\vh_i)=\tilde{J}^{\bar\kappa}(\hat\gamma,\vh). \label{eq:lim-hat-gamma-i}
\ee
By \eqref{eq:ineq-gamma-gamma-i}, \eqref{eq:ineq-hat-gamma-i}  and \eqref{eq:lim-hat-gamma-i}, we deduce
\bes
\tilde{J}^{\bar\kappa}(\bar\gamma, \vb)= \lim_{i\to\infty}\tilde{J}^{\kappa^i}(\gamma_i,v_i)\leq \lim_{i\to\infty}\tilde{J}^{\kappa^i}(\hat\gamma_i,\vh_i)=\tilde{J}^{\bar\kappa}(\hat\gamma,\vh).  
\ees
Then, $(\bar\gamma,\vb)$ is optimal, which finally proves that $(\bar\gamma, \pb)\in\tilde{\Gamma}^{\bar\kappa}[\xb]$.
\end{proof}

\subsection{Existence results}  \label{subsec:ExistenceResults} 

In this section, we characterize auxiliary MFGC equilibria as fixed points of a set-valued map. Applying Kakutani's fixed point theorem, we prove the existence of such equilibria. 

By \cite[Theorem 5.3.1]{ambrosio2008gradient} (Disintegration Theorem), 
for any $\kappa\in \calp_{m_0}(\tilde{\Gamma})$, there exists a $m_0$-a.e. uniquely determined Borel measurable family $\{\kappa_x\}_{x\in\RR^n}\subset \calp(\tilde\Gamma)$ such that
\bes
{\rm supp}(\kappa_x)\subset \tilde\Gamma[x],\;\; m_0{\rm -a.e.}\; x\in\RR^n,
\ees
and for any Borel mapping $\varphi \colon \tilde\Gamma\rar [0,+\infty]$,
\bes
\int_{\tilde{\Gamma}}\varphi(\gamma, p)\dd \kappa(\gamma, p)=\int_{\RR^n}\Big( \int_{\tilde{\Gamma}[x]}\varphi(\gamma, p)\dd \kappa_x(\gamma, p)\Big)\dd m_0(x).
\ees
Following the lines of \cite{Cannarsa-Capuani}, we define the set-valued map $E:\calp_{m_0}(\tilde{\Gamma})\rightrightarrows \calp_{m_0}(\tilde{\Gamma})$ as
\bes
E(\kappa)=\{\hat\kappa\in \calp_{m_0}(\tilde{\Gamma}): \supp(\hat\kappa_x)\subset \tilde\Gamma^\kappa[x], \;m_0{\rm -a.e. } \;\;x\in\cR^n \}.
\ees
It follows that $\kappa$ is an auxiliary MFGC equilibrium if and only if $\kappa\in E(\kappa)$.

\begin{theorem}  \label{Th:existence-extended-eq}
There exists at least one auxiliary MFGC equilibrium.
\end{theorem}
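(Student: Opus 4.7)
The plan is to apply Kakutani's fixed point theorem to the set-valued map $E$ restricted to the convex set $\mathcal{P}_{m_0}(\tilde{\Gamma}_B) \subset \mathcal{P}_{m_0}(\tilde{\Gamma})$, where $\tilde{\Gamma}_B$ is the compact subset defined in \eqref{eq:Gamma_B}. First I would check that $\mathcal{P}_{m_0}(\tilde{\Gamma}_B)$ is nonempty, convex, and compact for the narrow topology: it is nonempty because $m_0$ is supported in the bounded set $K_0$, so we can build an element by taking, for each $x\in K_0$, any admissible trajectory and constant costate, and it is convex and compact by \cite[Proposition 7.1.5]{ambrosio2008gradient} together with the fact that $\tilde{\Gamma}_B$ is compact in $\tilde{\Gamma}$ (by Arzelà--Ascoli applied to the equi-Lipschitz bounds $M_3,M_4$ on derivatives). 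Then I would verify that $E$ sends $\mathcal{P}_{m_0}(\tilde{\Gamma}_B)$ into itself: for any $\kappa$ in this set, the a priori bounds from Proposition \ref{prop:bound_control_pb_gamma}, Proposition \ref{prop_bound_p} and Lemma \ref{lem:bound-deriv-gamma-p} (which are uniform in the data $(m,P,x_0)$ satisfying \eqref{eq:bound-P-psi}, and in particular hold for $(\tilde m^\kappa, \tilde P^\kappa, x)$ with $x\in K_0$, using \hypBound{}-\hypBoundPsi{} for the bound on $\tilde P^\kappa$) ensure $\tilde{\Gamma}^\kappa[x] \subset \tilde{\Gamma}_B$ for every $x \in K_0$.

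Next I would establish non-emptiness of $E(\kappa)$ for every $\kappa \in \mathcal{P}_{m_0}(\tilde{\Gamma}_B)$. The set $\tilde{\Gamma}^\kappa[x]$ is nonempty for each $x \in K_0$ by combining Proposition \ref{prop:bound_control_pb_gamma} (existence of an optimal pair $(\bar\gamma,\bar v)$) with Proposition \ref{prop_bound_p} (existence of an associated costate $p$ with $\lambda_0=1$). To produce a measure in $E(\kappa)$, I would apply a measurable selection theorem (Kuratowski--Ryll-Nardzewski) to the multimap $x \mapsto \tilde{\Gamma}^\kappa[x]$; its closedness in the compact set $\tilde{\Gamma}_B$ follows from the closed-graph statement of Lemma \ref{lemma:closed-graph} applied with constant sequence $\kappa^i \equiv \kappa$, and its measurability is a consequence of this closedness together with the continuity of $x\mapsto \tilde{\Gamma}^\kappa[x]$ in the sense of graphs. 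One then obtains a Borel selection $x\mapsto (\gamma_x,p_x)$ and sets $\hat\kappa = \int_{\mathbb{R}^n}\delta_{(\gamma_x,p_x)}\,\dd m_0(x)$, which belongs to $E(\kappa)$.

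Convexity of $E(\kappa)$ is immediate by disintegration: for $\hat\kappa_1,\hat\kappa_2 \in E(\kappa)$ and $\lambda \in [0,1]$, the disintegration of $\lambda\hat\kappa_1 + (1-\lambda)\hat\kappa_2$ with respect to $m_0$ is $\lambda(\hat\kappa_1)_x + (1-\lambda)(\hat\kappa_2)_x$, whose support remains in the convex closure of $\tilde{\Gamma}^\kappa[x]$ — but since we only need support in $\tilde{\Gamma}^\kappa[x]$ itself, one observes that the union of supports stays in this set. The closed-graph property of $E$ is the crucial step: given sequences $\kappa^i \to \bar\kappa$ and $\hat\kappa^i \to \bar{\hat\kappa}$ narrowly with $\hat\kappa^i \in E(\kappa^i)$, one must show $\bar{\hat\kappa} \in E(\bar\kappa)$. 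I would disintegrate $\hat\kappa^i = \int (\hat\kappa^i)_x \,\dd m_0(x)$, and using narrow convergence of $\hat\kappa^i$ together with Lemma \ref{lemma:closed-graph}, prove that for $m_0$-a.e.\ $x$, any point $(\bar\gamma,\bar p) \in \supp(\bar{\hat\kappa}_x)$ arises as a uniform limit of $(\gamma_i,p_i) \in \tilde{\Gamma}^{\kappa^i}[x]$ (extraction of a converging subsequence is possible because $\tilde{\Gamma}_B$ is compact) and therefore belongs to $\tilde{\Gamma}^{\bar\kappa}[x]$.

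Finally, Kakutani's fixed point theorem (in its form for upper hemicontinuous correspondences with nonempty convex compact values on a convex compact subset of a locally convex Hausdorff space, here $\mathcal{P}(\tilde{\Gamma})$ with the narrow topology) yields a fixed point $\bar\kappa \in E(\bar\kappa)$, i.e.\ an auxiliary MFGC equilibrium. I expect the main obstacle to be the closed-graph step: the narrow convergence of $\hat\kappa^i$ must be transferred to a pointwise (in $x$) statement about supports of the disintegrations, which requires either a careful extraction argument based on the compactness of $\tilde{\Gamma}_B$ or the explicit use of a disintegration-compatible convergence theorem.
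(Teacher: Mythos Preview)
Your proposal is correct and follows essentially the same route as the paper, which applies Kakutani's theorem to $E$ on $\calp_{m_0}(\tilde{\Gamma}_B)$, invoking the a priori bounds for the self-mapping property, \cite[Lemma~3.5]{Cannarsa-Capuani} for nonemptiness and convexity of $E(\kappa)$, and Lemma~\ref{lemma:closed-graph} together with \cite[Lemma~3.6]{Cannarsa-Capuani} for the closed graph. Your anticipated obstacle in the closed-graph step is lighter than you suggest: rather than passing through convergence of disintegrations, one uses that any $(\bar\gamma,\bar p)\in\supp(\bar{\hat\kappa})$ is a uniform limit of points $(\gamma_i,p_i)\in\supp(\hat\kappa^i)\subset\bigcup_x\tilde{\Gamma}^{\kappa^i}[x]$ (a general fact about narrow limits of measures on a compact metric space), so Lemma~\ref{lemma:closed-graph} applies directly with $x_i=\gamma_i(0)\to\bar\gamma(0)$.
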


\begin{proof}
Arguing as in \cite[Lemma 3.5]{Cannarsa-Capuani}, for any $\kappa\in\calp_{m_0}(\tilde{\Gamma})$ the set $E(\kappa)$ is a nonempty convex set.
 By Proposition \ref{prop:bound_control_pb_gamma}, Proposition \ref{prop_bound_p}  and Lemma \ref{lem:bound-deriv-gamma-p} we have
\bes
E(\kappa)\subset \calp_{m_0}(\tilde\Gamma_B), \;\;\;\;\forall \kappa\in\calp_{m_0}(\tilde{\Gamma}),
\ees
where $\tilde\Gamma_B$ was introduced in \eqref{eq:Gamma_B}. By Lemma \ref{lemma:closed-graph}, and \cite[Lemma 3.6]{Cannarsa-Capuani}, we conclude that  the map $E \colon \calp_{m_0}(\tilde{\Gamma})\rightrightarrows \calp_{m_0}(\tilde{\Gamma})$ has closed graph.

Finally,  since the set $\tilde\Gamma_B$ is a compact subset of $\tilde{\Gamma}$, we obtain that $\calp_{m_0}(\tilde\Gamma_B)$ is a nonempty compact convex set. Then, we can apply Kakutani's fixed point theorem, to deduce that there exists $\hat\kappa\in \calp_{m_0}(\tilde\Gamma_B)$ such that $\hat\kappa\in E(\hat\kappa)$.
\end{proof}

\begin{remark} \label{rem:impossibility}
Let us comment on the impossibility to employ a similar fixed point approach directly based on the notion of Lagrangian equilibria (Definition \ref{def:lagrangian}). Consider a probability distribution $\eta$ of state-control trajectories. From the definition of $P^\eta$, there is no regularity property (with respect to time) to expect, since the controls in problem \eqref{eq:control_prob} are taken in $L^2(0,T;\cR^m)$.
Consequently, it is not possible to use relation \eqref{eq:reg_control} to derive any regularity property for the optimal controls with respect to the criterion $J[m^\eta,P^\eta]$ and thus it does not seem possible to construct an appropriate compact set of probability distributions of state-control trajectories, on which some fixed point relation could be defined.
\end{remark}

\section{Uniqueness}      \label{sec:uniqueness}

As usual in the MFG theory, by adding some monotonicity assumptions we can obtain uniqueness results.

\begin{definition}
A function $\varphi \colon \cR^n\times \calp(\cR^n)\to \cR$ is monotone if
$$
\int_{\cR^n}\left(\varphi(x,m_1)-\varphi(x,m_2)\right)\dd \left(m_1-m_2 \right)(x)\geq 0, \;\;\;\;\forall m_1,\;m_2\in\calp(\cR^n).
$$
It is strictly monotone if it is monotone and 
$$
\int_{\cR^n}\left(\varphi(x,m_1)-\varphi(x,m_2)\right)\dd \left(m_1-m_2 \right)(x) = 0, 
$$
if and only if $\varphi(x,m_1)=\varphi(x,m_2)$ for all $x\in\cR^n$. 
\end{definition}

An example of strictly monotone function can be found in \cite{Cannarsa-Capuani}.

\begin{theorem}
Assume that $f$ and $g_0$ are strictly monotone and  $\psi$ is also strictly monotone (i.e.\@ for all $x$ and $y \in \cR^m$ with $x \neq y$, $\langle \psi(y)-\psi(x), y-x \rangle > 0$).  
Let $\eta_1, \eta_2\in\calp_{m_0}(\Gamma\times\controlset)$ be Lagrangian MFGC equilibria for $m_0$, then $P^{\eta_1}=P^{\eta_2}$ and $J^{\eta_1}=J^{\eta_2}$.
\end{theorem}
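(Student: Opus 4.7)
The plan is to follow the classical Lasry--Lions monotonicity argument, adapted to the Lagrangian setting and to the extra price variable.

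First I would build a coupling $\pi \in \calp((\Gamma\times\controlset)^2)$ of $\eta_1$ and $\eta_2$ concentrated on pairs $((\gamma_1,v_1),(\gamma_2,v_2))$ with $\gamma_1(0)=\gamma_2(0)$. Since $e_0\sharp\eta_i=m_0$ for $i=1,2$, the disintegration theorem \cite[Theorem 5.3.1]{ambrosio2008gradient} gives Borel families $\{\eta_{i,x}\}$ with $\eta_i=\int \eta_{i,x}\,\dd m_0(x)$ and $\supp(\eta_{i,x})\subset \Gamma[x]\times\controlset$; setting $\pi=\int \eta_{1,x}\otimes\eta_{2,x}\,\dd m_0(x)$ does the job. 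By the Lagrangian equilibrium property, for $\pi$-a.e.\@ pair we have
\begin{equation*}
J^{\eta_1}(\gamma_1,v_1)\leq J^{\eta_1}(\gamma_2,v_2),\qquad J^{\eta_2}(\gamma_2,v_2)\leq J^{\eta_2}(\gamma_1,v_1).
\end{equation*}

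Next I would sum the two inequalities, rearrange, and integrate against $\pi$. Using the structure of $J[m,P]$ and cancelling the running cost $L$, the resulting inequality splits into three pieces $A+B+C\leq 0$ where
\begin{align*}
A&=\int_0^T\langle P^{\eta_1}(t)-P^{\eta_2}(t),\theta^{\eta_1}(t)-\theta^{\eta_2}(t)\rangle\,\dd t,\\
B&=\int_0^T\int_{\cR^n}\bigl(f(x,m^{\eta_1}_t)-f(x,m^{\eta_2}_t)\bigr)\,\dd(m^{\eta_1}_t-m^{\eta_2}_t)(x)\,\dd t,\\
C&=\int_{\cR^n}\bigl(g_0(x,m^{\eta_1}_T)-g_0(x,m^{\eta_2}_T)\bigr)\,\dd(m^{\eta_1}_T-m^{\eta_2}_T)(x),
\end{align*}
with $\theta^{\eta_i}(t)=\int v(t)\,\dd\eta_i(\gamma,v)$, so that $P^{\eta_i}=\psi\circ\theta^{\eta_i}=\nabla\phi\circ\theta^{\eta_i}$. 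Recognising the cross terms in $B$ and $C$ from $m^{\eta_i}_t=e_t\sharp\eta_i$ and in $A$ from the definition of $\theta^{\eta_i}$ via Fubini is the technical core; no deep estimate is needed but I would verify applicability of Fubini using the $L^2$ bound on controls from the support of $\eta_i$ (a consequence of the equilibrium condition together with Proposition \ref{prop:bound_control_pb_gamma}).

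Now each of $A$, $B$, $C$ is nonnegative: $A\geq 0$ comes from monotonicity of $\psi=\nabla\phi$ (strict convexity of $\phi$), while $B\geq 0$ and $C\geq 0$ come from monotonicity of $f$ and $g_0$ respectively. Combined with $A+B+C\leq 0$ this forces $A=B=C=0$. The strict convexity of $\phi$ implies that $\psi$ is strictly monotone, and in particular injective; hence $A=0$ yields $\theta^{\eta_1}(t)=\theta^{\eta_2}(t)$ for a.e.\@ $t$, so $P^{\eta_1}=P^{\eta_2}$. Strict monotonicity of $f$ and $g_0$ applied to $B=0$ and $C=0$ gives $f(\cdot,m^{\eta_1}_t)\equiv f(\cdot,m^{\eta_2}_t)$ for a.e.\@ $t$ and $g_0(\cdot,m^{\eta_1}_T)\equiv g_0(\cdot,m^{\eta_2}_T)$.

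Finally, writing
\begin{equation*}
J^{\eta_1}(\gamma,v)-J^{\eta_2}(\gamma,v)=\int_0^T\langle P^{\eta_1}-P^{\eta_2},v\rangle\,\dd t+\int_0^T\bigl(f(\gamma,m^{\eta_1}_t)-f(\gamma,m^{\eta_2}_t)\bigr)\,\dd t+\bigl(g_0(\gamma(T),m^{\eta_1}_T)-g_0(\gamma(T),m^{\eta_2}_T)\bigr),
\end{equation*}
and inserting the three equalities just obtained shows the right-hand side vanishes identically on $\Gamma\times\controlset$, so $J^{\eta_1}=J^{\eta_2}$. The main obstacle I expect is not conceptual but bookkeeping: making the Fubini interchange rigorous and verifying that $\pi$-integrability of all cross terms follows from the a priori bounds in Section \ref{sec:control-prob}.
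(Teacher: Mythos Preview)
Your proposal is correct and follows the same Lasry--Lions monotonicity argument as the paper. The only cosmetic difference is that the paper bypasses the explicit coupling by introducing the value functions $u^i(x)=\inf_{(\gamma,v)\in\mathcal{K}[x]}J^{\eta_i}(\gamma,v)$ and integrating the optimality/suboptimality inequalities separately against $\eta_1$ and $\eta_2$; the $u^i$ terms then cancel because both equilibria share the initial marginal $m_0$, which is exactly what your coupling $\pi$ accomplishes.
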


\begin{proof}
Let us define 
$
u^i(x)= \inf_{(\gamma, v)\in \calk[x]}J^{\eta_i}(\gamma, v)$, $i=1,2.
$
Let $(\gamma,v)\in \supp(\eta_1)$, then 
\begin{align*}
u^1(\gamma(0))= & \int_0^T\left(L(\gamma(t), v(t)) + \langle P^{\eta_1}(t), v(t)\rangle + f(\gamma(t), m^{\eta_1}_t) \right)\dd t+g_0(\gamma(T), m^{\eta_1}_T), \\
u^2(\gamma(0))\leq & \int_0^T\left(L(\gamma(t), v(t)) + \langle P^{\eta_2}(t), v(t)\rangle + f(\gamma(t), m^{\eta_2}_t) \right)\dd t+g_0(\gamma(T), m^{\eta_2}_T).
\end{align*}
Integrating w.r.t. $\eta_1$ we obtain
\begin{align*}
& \int_{\Gamma\times\controlset} (u^1(\gamma(0))-u^2(\gamma(0)))\dd \eta_1(\gamma, v) + \int_{\Gamma\times\controlset}\int_0^T \langle P^{\eta_2}(t)-P^{\eta_1}(t), v(t)\rangle \dd t\dd \eta_1(\gamma, v) \\[6pt]
& \qquad \geq 
\int_{\Gamma\times\controlset}\left( g_0(\gamma(T), m^{\eta_1}_T)-g_0(\gamma(T), m^{\eta_2}_T)\right)\dd \eta_1(\gamma, v)\\[6pt]
& \qquad \qquad +\int_{\Gamma\times\controlset}\int_0^T \left( f(\gamma(t), m^{\eta_1}_t)-f(\gamma(t), m^{\eta_2}_t)\right)\dd t\dd \eta_1(\gamma, v). 
\end{align*}
By the definition of $m^{\eta_1}$ we obtain
\begin{align*}
& \int_{\cR^n} (u^1(x)-u^2(x))\dd m_0(x)+\int_{\Gamma\times\controlset}\int_0^T \langle P^{\eta_2}(t)-P^{\eta_1}(t), v(t)\rangle \dd t\dd \eta_1(\gamma, v)\\[6pt]
& \qquad \geq\int_{\cR^n}\left( g_0(x, m^{\eta_1}_T)-g_0(x, m^{\eta_2}_T)\right)\dd  m^{\eta_1}_T(x)\\[6pt]
& \qquad \qquad +\int_0^T\int_{\cR^n} \left( f(x, m^{\eta_1}_t)-f(x, m^{\eta_2}_t) \right)\dd m^{\eta_1}_t(x)\dd t.
\end{align*}
Arguing in a similar way for $\eta_2$, we deduce
\begin{align}
& \int_0^T\int_{\Gamma\times\controlset} \langle P^{\eta_2}(t)-P^{\eta_1}(t), v(t)\rangle \dd (\eta_1-\eta_2)(\gamma, v)\dd t \notag \\[6pt]
& \qquad \geq \int_{\cR^n}\left( g_0(x, m^{\eta_1}_T)-g_0(x, m^{\eta_2}_T)\right)\dd \left(m^{\eta_1}_T-m^{\eta_2}_T\right)(x) \notag \\[6pt]
& \qquad \qquad +\int_0^T\int_{\cR^n} \left( f(x, m^{\eta_1}_t)-f(x, m^{\eta_2}_t) \right)\dd \left(m^{\eta_1}_t-m^{\eta_2}_t\right)(x) \dd t. \label{ineq:f-g-monotone}
\end{align}
By the definition of $P^{\eta_i}$ we deduce
\begin{align*}
& \int_0^T\int_{\Gamma\times\controlset} \langle P^{\eta_2}(t)-P^{\eta_1}(t), v(t)\rangle\dd (\eta_1-\eta_2)(\gamma, v)  \dd t\\[6pt]
& \quad = \int_0^T\Big\langle \psi \left({\textstyle \int} v\dd\eta_2(\gamma, v)\right)- \psi \left( {\textstyle \int}  v\dd\eta_1(\gamma, v)\right), {\textstyle \int}  v\dd\eta_1(\gamma, v)- {\textstyle \int}  v\dd\eta_2(\gamma, v)  \Big\rangle \dd t
\end{align*}
and the r.h.s. is non-positive,  by Assumption \hypConv{}-\hypConvPsi{}. In addition, since $f$ and $g_0$ are monotone, we deduce that the three terms in \eqref{ineq:f-g-monotone} vanish. Since $f$ and $g_0$ are strictly monotone we obtain for all $x\in\cR^n$ and a.e. $t\in (0,T)$,
\bes
f(x, m^{\eta_1}_t)=f(x, m^{\eta_2}_t) \quad \text{and }\quad g_0(x, m^{\eta_1}_T)=g_0(x, m^{\eta_2}_T).
\ees 
By the strict monotony of $\psi$ we have
\bes 
\int v\dd\eta_1(\gamma, v) = \int v\dd\eta_2(\gamma, v), \;\;\;\text{a.e.}\;t\in (0,T),
\ees 
which in particular implies $P^{\eta_1}=P^{\eta_2}$. The result follows. 
\end{proof}

\begin{remark} As noted in \cite{Cannarsa-Capuani}, if we assume that $\psi$ is strictly monotone, $g_0$ is monotone and $f$ satisfies
$$
\int_{\cR^n}\left(f(x,m_1)-f(x,m_2)\right)\dd \left(m_1-m_2 \right)(x)\leq 0\;\Rightarrow m_1=m_2,
$$
then, following the ideas of the above proof, we obtain $P^{\eta_1}=P^{\eta_2}$ and $m^{\eta_1}=m^{\eta_2}$. 
\end{remark}

\section{Conclusion}

We have proved the existence of a Lagrangian equilibrium for an MFG of controls with final state and mixed state-control constraints, and a class of nonlinear dynamics. Using auxiliary mappings and a priori estimates on optimal state-costate trajectories, we have reformulated the problem as a fixed point problem on a compact set of probability measures on state-costate trajectories. As explained in Remark \ref{rem:impossibility}, this reformulation was necessary, in the absence of smoothing properties of the price interaction.



A future direction of research may concern the characterization of the equilibrium with a system of coupled partial differential equations (HJB equation and continuity equation), as it is done for example in \cite{cannarsa2018mean}. In this reference, a feedback control is constructed thanks to the differentiability of the value function, itself obtained with the strict convexity of the Hamiltonian. This last property is however lost (in general) in the presence of mixed state-control constraints.
Another difficulty would arise from the treatment of final-state constraints. The recent work \cite{bokanowski2021relationship} may contain useful tools in that direction; this article
deals with optimal control problems with final-state constraints: it provides a characterization of the value function as well as sensitivity relation.


In some future work, one could also address the extension of our aggregative MFG model to the case of pure state constraints, as those considered in \cite{Cannarsa-Capuani}. As we already pointed out, our analysis relies in a crucial way on some a priori estimates on the costate, whose evolution is not impacted by the price variable. Proving the regularity of the costate, in the presence of pure state constraints and a merely measurable price function, seems however to be a great challenge.




\appendix

\section{Proof of optimality conditions} \label{section:proof_oc}

We provide in this section a proof of the optimality conditions stated in Proposition \ref{prop:opti_cond}.
An important difficulty is the fact that optimal controls are not a priori known to be bounded (we are not able to prove the boundedness of optimal controls without having the optimality conditions at hand).
It is therefore not possible to formulate the optimal control problem as an abstract problem satisfying a qualification condition in $L^\infty$ and to derive easily optimality conditions, as it is done in \cite{bonnans14} for example.
It turns out that the optimal control problem can be naturally formulated as an optimal control problem for which the dynamic constraint takes the form of a differential inclusion. This enables us to use the associated optimality conditions, referred to as \emph{extended Euler-Lagrange} conditions in the literature. More precisely, our analysis is based on \cite[Theorem 7.5.1]{Vin00}, which covers the case of unbounded controls and requires few regularity assumptions.

We first introduce two definitions of cones, used for the expression of the optimality conditions for problems with differential inclusions.
Given a closed subset $K$ of $\cR^{\ell}$ and 
 $x \in K$, we call proximal normal cone of $K$ at $x$ the set $N_K^P(x)$ defined by
\begin{equation*}
N_K^P(x)= \big\{
p \in \cR^{\ell} : \exists C> 0, \, \forall y \in K, \, \langle p, y-x \rangle \leq  C |y-x|^2
\big\}.
\end{equation*}
That is, $p \in N_K^P(x)$ if and only if, for some $C>0$,
\be \label{PNC}
x\in \argmin\;\{ \langle -p, y \rangle + C |y-x|^2\;: \, y\in K\}. 
\ee
The limiting normal cone $N_K(x)$ is defined by
\begin{equation*}
N_K(x)= \left\{ p \in \cR^{\ell} \, \Big|\, \exists (x_k,p_k)_{k \in \mathbb{N}} \text{ such that: }
\begin{array}{l}
(x_k,p_k) \rightarrow (x,p), \text{ as $k\to \infty$} \\[0.3em]
x_k \in K, \ p_k \in N_K^P(x_k)\;\; \forall k \in \mathbb{N}
\end{array}
\right\}.
\end{equation*}

\begin{proof}[Proof of Proposition \ref{prop:opti_cond}]

\emph{Step 1:} reformulation of the optimal control problem.
Let us fix a solution $(\bar{\gamma},\bar{v}) \in H^1(0,T;\cR^n) \times L^2(0,T;\cR^m)$ to \eqref{eq:control_prob}.
In order to alleviate the notation, we first define
\begin{equation*}
\tilde{L}(t,x,v)= L(x,v)+ \langle P(t), v \rangle + f(x,m(t)),
\end{equation*}
for all $(x,v) \in \cR^{n+m}$ and for a.e.\@ $t \in (0,T)$.

\if{
{\color{blue}We work with an augmented state variable $y= (y^{(1)},y^{(2)},y^{(3)}) \in \cR^{n+m+1}$.
We consider a set-valued map $F \colon [0,T] \times \cR^{n+m+1} \rightrightarrows \cR^{n+m+1}$. Given $y=(x,v,z) \in \cR^{n+m+1}$, we set
$
F(t,y)= \big\{
\xi(t,y) :\, c(x,v) \leq 0, \, z \geq 0
\big\},
$
where
\begin{equation*}
\xi^{(1)}(t,y)= a(x) + b(x) v, \;\;
\xi^{(2)}(t,y)=  v,\;\;
\xi^{(3)}(t,y)=  \tilde{L}(t,x,v) + z.
\end{equation*}
}
}\fi

We work with an augmented state variable $y= (y^{(1)},y^{(2)},y^{(3)}) \in \cR^{n+m+1}$.
We consider a set-valued map $F \colon [0,T] \times \cR^{n+m+1} \rightrightarrows \cR^{n+m+1}$ defined as  
$
F(t,y)= \big\{
\xi(t,y^{(1)},v,z) :\,(v,z)\in\cR^{m}\times\cR,\, c(y^{(1)},v) \leq 0, \, z \geq 0
\big\},
$
where  for $(x,v,z)\in  \cR^{n+m+1}$ 
\begin{equation*}
\xi^{(1)}(t,x,v,z)= a(x) + b(x) v, \;\;
\xi^{(2)}(t,x,v,z)=  v,\;\;
\xi^{(3)}(t,x,v,z)=  \tilde{L}(t,x,v) + z.
\end{equation*}
The component $\xi^{(1)}$ coincides with the dynamics of the original state variable. The second component has a technical purpose, it allows in particular to prove easily that $F(t,y)$ is closed (which would be delicate otherwise, since the controls are not necessarily bounded). The third component allows to put the problem in Mayer form. 
The initial condition associated with the new state variable is defined by 
$
\bar{y}_0 = (x_0,0,0) \in \cR^{n+m+1}
$. Let $K \subseteq \cR^{2(n+m+1)}$ be given by
\begin{equation*}
K= \big\{ (y_i,y_f) \in \cR^{2(n+m+1)} : y_i= \bar{y}_0,\, \, g_1 \big( y_f^{(1)} \big)= 0, \, g_2 \big( y_f^{(1)} \big) \leq 0 \big\}.
\end{equation*}
We define $\Phi \colon \cR^{2(n+m+1)} \rightarrow \cR$ by 
$
\Phi(y_i,y_f)= g_0(y_f^{(1)}) + y_f^{(3)}
$. The optimal control problem \eqref{eq:control_prob} can finally be reformulated as follows:
\begin{equation} \label{pb:opti_inclusion}
\inf_{y \in H^1(0,T;\cR^{n+m+1})} \Phi(y(0),y(T)), \;\;
\text{subject to: }
\left\{
\begin{array}{l}
\dot{y}(t) \in F(t,y(t)), \ \text{for a.e. $t \in (0,T)$}, \\
(y(0),y(T)) \in K.
\end{array}
\right.
\end{equation}
More precisely, the trajectory $\bar{y}$, defined by
\begin{equation*}
\begin{cases}
\bar{y}^{(1)}(t)= \bar{\gamma}(t) \\
\bar{y}^{(2)}(t)= \int_0^t \bar{v}(s) \dd s \\
\bar{y}^{(3)}(t)= \int_0^t \tilde{L}(s,\bar{\gamma}(s),\bar{v}(s)) \dd s
\end{cases}
\end{equation*}
is a solution to \eqref{pb:opti_inclusion}.
Denoting $\bar{\xi}= \dot{\bar{y}}$, we note that $\bar{\xi}(t)= \xi(t,\bar{\gamma}(t),\bar{v}(t),0)$.

\emph{Step 2:}
verification of the technical conditions of \cite[Theorem 7.5.1]{Vin00}.
It is easily verified that for a.e.\@ $t \in (0,T)$, $F(t,y)$ is non-empty and convex, as a consequence of Assumptions \hypConv-\hypConvL, \hypConv-\hypConvC, and \hypFeas-\hypFeasLocal. It is also easily verified that $F$ is measurable and has a closed graph. It remains to show that there exist $\eta > 0$ and $k \in L^1(0,T)$ such that
\begin{equation} \label{eq:technical_inclusion}
F(t,\tilde{y}) \cap \big( \dot{\bar{y}}(t) + \eta k(t) \bar{B}(1) \big)
\subseteq F(t,y) + k(t) |\tilde{y}-y| \bar{B}(1),
\end{equation}
for a.e.\@ $t \in (0,T)$ and for all $y$ and $\tilde{y}$ such that $|y-\bar{y}(t)| \leq \eta$ and $|\tilde{y}-\bar{y}(t)| \leq \eta$.
Let $t \in (0,T)$, let $y$ and $\tilde{y}$ be such that $|y-\bar{y}| \leq \delta/2$ and $|\tilde{y}-\bar{y}| \leq \delta /2$, where $\delta$ is given by Lemma \ref{lemma:reg_metric_mixed}, with $R= \| \bar{\gamma} \|_{L^\infty(0,T;\cR^n)}$.
Let $\bar{k}(t)= 1 + |\bar{v}(t)|^2$.
Let $\tilde{\xi} \in F(t,\tilde{y}) \cap \big( \dot{\bar{y}}(t) +  \bar{k}(t) \bar{B}(1) \big)$. Let $\tilde{v} \in \cR^m$ and $\tilde{z} \in \cR$ be such that $\tilde\xi = \xi(t,\tilde{y}^{(1)},\tilde{v},\tilde{z})$, $c(\tilde{y}^{(1)},\tilde{v}) \leq 0$ and $\tilde{z} \geq 0$.
Since $|\tilde{\xi} - \bar{\xi}(t) | \leq \bar{k}(t)$, we deduce that
\begin{equation*}
|\tilde{\xi}^{(2)}-\bar{\xi}^{(2)}(t)|
= | \tilde{v} - \bar{v}(t) | \leq \bar{k}(t).
\end{equation*}
Therefore
\begin{equation*}
|\tilde{v}| \leq |\tilde{v} - \bar{v}(t)| + |\bar{v}(t)|
\leq \bar{k}(t) + \frac{1}{2} + \frac{1}{2} |\bar{v}(t)|^2
\leq \frac{3}{2} \bar{k}(t).
\end{equation*}
We also have 
$
|y^{(1)}-\tilde{y}^{(1)}| \leq | y - \bar{y}(t) | + | \tilde{y}- \bar{y}(t) | \leq \delta
$. 
Thus by Lemma \ref{lemma:reg_metric_mixed}, there exists $v \in \cR^m$ such that $c(y^{(1)},v) \leq 0$ and $|v-\tilde{v}| \leq C |y^{(1)}-\tilde{y}^{(1)}|$ (note that all constants $C$ involved for the verification of \eqref{eq:technical_inclusion} are independent of $(t,\tilde{y},\tilde{v},y,v)$).
Let $\xi= \xi(t,y^{(1)},v,\tilde{z})$. We have $\xi \in F(t,y)$.
It remains to bound $|\xi- \tilde{\xi}|$. We first have
\begin{align*}
|\xi^{(1)}-\tilde{\xi}^{(1)}|
\leq \ & |a(y^{(1)})-a(\tilde{y}^{(1)}) |
+|b(y^{(1)})| \cdot |v - \tilde{v}| + |b(y^{(1)})-b(\tilde{y}^{(1)})| \cdot |\tilde{v}| \\
\leq \ & C \big( |y- \tilde{y}| + |v- \tilde{v}| \big) ( 1 + |\bar{v}(t)|^2 ) \\
\leq \ & C |y-\tilde{y} | \bar{k}(t),
\end{align*}
by \hypReg{}-\hypRegA{}.
The same estimate can be established for $|\xi^{(3)}-\tilde{\xi}^{(3)}|$ (with the help of Assumption \hypBound{}-\hypBoundL{}) and for $|\xi^{(2)}-\tilde{\xi}^{(2)}|$, thus
\begin{equation} \label{eq:opti_cond_opt_1}
| \xi - \tilde{\xi} |
\leq C |y-\tilde{y} | \bar{k}(t).
\end{equation}
The inclusion \eqref{eq:technical_inclusion} follows, taking $k(t)= C \bar{k}(t)$ and $\eta= \min \big( \delta/2, 1/C \big)$, where $C$ is the constant appearing in the right-hand side of \eqref{eq:opti_cond_opt_1}.

\emph{Step 3:} abstract optimality conditions and interpretation.
Applying \cite[Theorem 7.5.1]{Vin00}, we obtain the existence of $\bar{p} \in W^{1,1}(0,T;\cR^{n+m+1})$ and $\lambda_0 \geq 0$ such that:
\begin{enumerate}
\item[(i)] $(\bar{p},\lambda_0) \neq (0,0)$,
\item[(ii)] $-\dot{\bar{p}}(t) \in \text{conv} \big\{ q :\, (q,-\bar{p}(t)) \in N_{\text{Gr}(F(t,\cdot))}(\bar{y}(t),\bar{\xi}(t)) \big\}$,
\item[(iii)] $(-\bar{p}(0),\bar{p}(T)) \in \lambda_0 \nabla \Phi(\bar{y}(0),\bar{y}(T)) + N_K(\bar{y}(0),\bar{y}(T))$,
\end{enumerate}
where $\text{Gr}(F(t,\cdot))= \{ (y,\xi) : \, \xi \in F(t,y) \}$.
We let the reader verify that the condition (iii) (together with Assumptions \hypQualif{}-\hypQualifEq{} and \hypQualif{}-\hypQualifIneq{}) implies the existence of $\lambda_1 \in \cR^{n_{g_1}}$ and $\lambda_2 \in \cR^{n_{g_2}}$, $\lambda_2 \geq 0$, such that
\begin{equation*}
\begin{array}{rl}
\bar{p}^{(1)}(T)^\top= & \lambda_0 Dg_0(\bar{y}^{(1)}(T)) + \lambda_1^\top D g_1(\bar{y}^{(1)}(T)) + \lambda_2^\top D g_2(\bar{y}^{(1)}(T)), \\
\bar{p}^{(2)}(T)^\top= & 0, \\
\bar{p}^{(3)}(T)^\top= & \lambda_0,
\end{array}
\end{equation*}
and such that $\langle g_2(\bar{y}^{(1)}(T)),\lambda_2 \rangle= 0$.
For the interpretation of the adjoint equation (condition (ii)), we
need to examine the limiting normal cone of the graph of
$F(t,\cdot)$. Let $y \in \cR^{n+m+1}$, let $\xi \in F(t,y)$, and let
$(q,-p) \in N_{\text{Gr} (F(t,\cdot))}(y,\xi)$. Let $y_k \rightarrow
y$, $\xi_k= \xi(t,y_k^{(1)},v_k,z_k) \rightarrow \xi$, $\xi_k \in F(t,y_k)$,
$(q_k,p_k) \rightarrow (q,p)$ be such that $(q_k,-p_k) \in
N_{\text{Gr}(F(t,\cdot))}^P(y_k,\xi_k)$. By definition of the proximal
normal cone, 
  see \eqref{PNC}, $(y_k,\xi_k)$ is for some $C>0$ (depending on $k$) solution of the minimization
  problem
\bes
  \Min_{(y,\xi)\in \text{Gr}(F(t,\cdot))} \sum_{i=1}^3 \left(
   \langle -q^{(i)}_k, y^{(i)} \rangle 
+
\langle p^{(i)}_k, \xi^{(i)} \rangle  \right)
+
C  ( |  y-y_k|^2 + |  \xi-\xi_k|^2 ).
\ees
In view of the expression of the multimapping $F$, 
this holds if and only if, for some $(v_k,z_k)\in \cR^{m}\times\cR$,
$(y_k,v_k,z_k)\in \cR^{n+m+1}\times\cR^{m}\times\cR$ is solution of
\bes
\ba{lll}
  \Min\limits_{(y,v,z)} \sum_{i=1}^3 
   \langle -q^{(i)}_k, y^{(i)} \rangle 
+
\langle p^{(1)}_k,  a(y^{(1)}) + b(y^{(1)}) v  \rangle
\\ \quad \quad + \langle p^{(2)}_k, v \rangle  
+
\langle p^{(3)}_k, \tilde{L}(t,y^{(1)},v) + z \rangle  +

C  ( |  y-y_k|^2 + |  \xi-\xi_k|^2 ),
\\[6pt]
\quad \text{s.t. $c(y^{(1)},v) \leq 0$ and $z \geq 0$}.
\ea
\ees
Since this problem is qualified, we obtain the existence of $\nu_k \in \cR^{n_c}$, $\nu_k \geq 0$, such that the following stationarity and complementarity conditions hold:
\begin{itemize}
\item Stationarity with respect to $z$: $p^{(3)}_k \geq 0$.
\item Stationarity with respect to $v$: 
\begin{align}  \label{eq:stationarity-v}
(p_k^{(1)})^\top b(y_k^{(1)}) + (p_k^{(2)})^\top + p_k^{(3)} D_v \tilde{L}(y_k^{(1)},v_k) 
+ \nu_k^\top D_v c(y_k^{(1)},v_k)= 0.
\end{align}
\item Stationarity with respect to $y^{(1)}$:
\begin{equation*}
\ba{l}
-(q_k^{(1)})^\top + (p_k^{(1)})^\top
\Big(
Da(y_k^{(1)}) + \sum_{i=1}^m Db(y_k^{(1)}) v_{k,i}
\Big)\\[5pt]
\hspace{3cm} + p_k^{(3)} D_x \tilde{L}(y_k^{(1)},v_k)
+ \nu_k^\top D_x c(y_k^{(1)},v_k)
= 0.
\ea
\end{equation*}
\item Stationarity with respect to $y^{(2)}$: $q_k^{(2)}= 0$.
\item Stationarity with respect to $y^{(3)}$: $q_k^{(3)}= 0$.
\item Complementarity: $\langle c(y_k^{(1)},v_k), \nu_k \rangle = 0$.
\end{itemize}

The inward pointing condition, Assumption \hypQualif{}-\hypQualifInward{}, yields a uniform bound on $\nu_k$
(with respect to $k$). This allows to pass to the limit in the above
relations, using the continuity assumptions on $\tilde{L}$, $a$, $b$,
and $c$ (note that $\xi_k^{(2)} \rightarrow \xi^{(2)}$ implies that
$v_k \rightarrow v$).  We deduce that $\dot{\bar{p}}^{(2)}= 0$ and
$\dot{\bar{p}}^{(3)}= 0$, thus $\bar{p}^{(2)}(t)= 0$ and
$\bar{p}^{(3)}(t)= \lambda_0$.  
Since $\pb\in W^{1,1}(0,T;\cR^{n+m+1})$, it belongs to $L^\infty(0,T;\cR^{n+m+1})$. Passing to the limit in \eqref{eq:stationarity-v} we obtain 
\be\label{eq:app-lam-0}
  \lambda_0 D_v \tilde{L}(\yb^{(1)}(t),\vb(t)) +(\pb^{(1)}(t))^\top b(\yb^{(1)}(t))+ \bar\nu(t)^\top D_v c(\yb^{(1)}(t),\vb(t))= 0.
\ee 
If $\lambda_0\neq 0$, $\bar\nu(t)=\nu\left[\yb^{(1)}(t), \lambda_0 P(t)+(\pb^{(1)}(t))^\top b(\yb^{(1)}(t))\right]$, by  Lemma \ref{lemma:aux_arg_min_ham}.
Since $\nu[\cdot, \cdot]$ is Lipschitz continuous on bounded sets, we obtain $\bar\nu\in L^\infty(0,T;\cR^{n_c})$. Analogously, $\vb \in L^\infty(0,T;\cR^m)$, therefore we  deduce that $\pb\in W^{1,\infty}(0,T;\cR^{n+m+1})$.

If $\lambda_0=0$, we denote by $\bar\nu_I(t)$ the components of $\bar\nu(t)$ whose indices belong to the set $I(\yb^{(1)}(t), \vb(t))$. Skipping the time arguments, from \eqref{eq:app-lam-0} we deduce 
$$
\bar\nu_I = - \left[D_vc_{I}(\yb^{(1)},\vb)D_vc_{I}(\yb^{(1)},\vb)^\top \right]^{-1}D_vc_{I}(\yb^{(1)},\vb) b(\yb^{(1)})^\top\pb^{(1)}.
$$
The matrix $\left[D_vc_{I}(\yb^{(1)},\vb)D_vc_{I}(\yb^{(1)},\vb)^\top \right]$ is uniformly invertible by  \hypQualif{}-\hypQualifLI{}. Since $\yb^{(1)}, \pb^{(1)}\in L^\infty(0,T;\cR^{n})$, by \hypReg{}-\hypRegL{}, \hypBound{}-\hypBoundC{}  and \hypBound{}-\hypBoundA{} we deduce that $\bar\nu\in L^\infty(0,T;\cR^{n_c})$. In this case, we only have $\vb\in L^2(0,T;\cR^m)$, so we obtain $\pb\in W^{1,2}(0,T;\cR^{n+m+1})$.
\end{proof}

\section{Application to a gas storage problem}
\subsection{Setting}

Consider the case when the scalar state $\gamma(t)$
represents a scaled energy storage,
with value in $[0,1]$ and integrator dynamics
\bes
\dot \gamma(t) = v(t).
\ees
Therefore $a(x)=0$ and $b(x)=1$.
In addition we have limitations on the efficiency of
pumping depending on the storage level, namely
\bes
\varphi_1(\gamma(t)) \leq v(t) \leq \varphi_2(\gamma(t)),
\ees
with $\varphi_1$ and $\varphi_2$
decreasing 
and of class $C^1: [0,1] \rar \RR$, with negative
(resp. positive) values except for
$\varphi_1(0) = \varphi_2(1)=0$,
and for some $\alpha_1>0$ and $\alpha_2>0$:
\be
- \alpha_1 x \leq \varphi_1(x); \quad \varphi_2(x) \leq \alpha_2 ( 1-x).
\ee
In particular we have the uniform bound
\be
v_m :=  \varphi_1(1) \leq
v(t) \leq \varphi_2(0) =: v_M,
\ee
For example, we could take 
$\varphi_1(x) = - \alpha_1 x$ and $\varphi_2(x) = \alpha_2 ( 1-x)$.
Since these constraints imply that the state
remains between 0 and 1 (assuming of course that
$\gamma(0) \in [0,1]$), we can discard the
pure state constraint $\gamma(t) \in [0,1]$.

In what follows we will assume that 
the support of $m_0$ is a compact subset of $(0,1)$. 
It follows that for some $\eps_X>0$, 
any trajectory $(\gamma,v)$ satisfying the mixed state and control constraints 
is such that $\gamma(t) \in [\eps_X,1-\eps_X]$, for all $t \in [0,T]$.
So point 3 of Remark \ref{rem:simplification_assumptions} applies with 
$X= [\eps_X,1-\eps_X]$, taking $\delta \in (0,\eps_X)$ in the definition of
$X'$.

The two mixed constraints are expressed in the format of this paper as
\be
c_1(x,v)= \varphi_1(x)-v; \quad 
c_2(x,v) = v - \varphi_2(x).
\ee
They cannot be active simultaneously, since
$\varphi_1$ and $\varphi_2$ have opposite sign,
and do not have zero value simultaneously. 
It follows that
\bes
\delta := \min_{x\in [0,1]} [
\varphi_2(x) - \varphi_1(x) ) ]
\ees
is positive.
Consequently any $(x,v)$ such that
$c(x,v) \leq 0$ satisfies
\be
\label{c1plusc2}
c_1(x,v) + c_2(x,v) = \varphi_1(x) - \varphi_2(x) \leq - \delta.
\ee
A classical constraint is to have a minimal storage at the
end of the period, say $\gamma(T)\geq \half$,
corresponding to
\be
g_2(x) := \half -x.
\ee
There is no equality constraint on the final state.
Also, $f(x,m)=0$ and $g_0(x,m)= - \pi x$,
where $\pi$ can be interpreted as a final price. 
For $\psi$ we can take the identity, which is not bounded but, since
is applied to a set of bounded controls $v$, we can redefine it as
a bounded, continuous monotone operator.
We can also take for instance $L(x,v) = v^2/2$.

\subsection{Checking hypotheses}

The non obvious hypotheses are 
\hypQualif{}-\hypQualifIneq{}
and \hypQualif{}-\hypQualifInward{}. In view of Remark \ref{rem:simplification_assumptions}, it is enough to check this latter assumption for $x\in X'$.
Assume that $c_1(x,v) \leq c_2(x,v)$, i.e. the second constraint is ``more active''
than the first one.  Taking $w=-\delta/4$, by \eqref{c1plusc2} we have $c_1(x,v) \leq - \delta/2$, 
so that
\be
c_1(x,v) + D_v c_1(x,v) w
\leq -\delta/4.
\ee
Also,
\be
c_2(x,v) + D_v c_2(x,v) w
=
c_2(x,v) - \delta/4 \leq - \delta/4.
\ee  
If $c_2(x,v) \leq c_1(x,v)$, taking $w=\delta/4$ we obtain similar estimates. Hypothesis \hypQualif{}-\hypQualifInward{}
follows.


We next discuss \hypQualif{}-\hypQualifIneq{}.
Remember that we ignore the first condition since
there is no equality constraint for the final state.
Given $\eps=(\eps_1,\eps_2)$ with $\eps_1\geq 0$ and $\eps_2\geq 0$, set
\be
\kappa_1(t):= \varphi_1(\gamma(t)) - v(t) + \eps_1;
\quad
\kappa_2(t) := - \eps_1 + \varphi_2(\gamma(t)) - v(t).
\ee
In our setting, the third condition of
\hypQualif{}-\hypQualifIneq{},
when $\eps_1 := 1/C$, can be expressed as
\be
\label{bound-kappa1}
\kappa_1(t)+ D \varphi_1(\gamma(t)) y(t)
\leq w(t) \leq \kappa_2(t) + D \varphi_2(\gamma(t)) y(t),
\ee
with $y(t) = \int_0^t w(s) \dd s$. Observe that
\be
\label{bound-kappa2}
\kappa_1(t) - \eps_1 \leq 0 \leq \kappa_2(t) + \eps_1.
\ee
Let 
\be
F(t,y) := \min( \kappa_2(t) + D \varphi_2(\gamma(t)) y,\eps_2).
\ee
Since $F(t,y)$ is a Lipschitz function of $y$,
by the Cauchy-Lipschitz theorem, the ODE
\be
\dot y(t) = F(t,y(t)), \;\; t\in (0,T); \quad y(0) =0
\ee
has a unique solution $y_\eps(t)$, and we denote
$w_\eps(t):= \dot y_\eps(t) = F(t,y_\eps(t))$.
By the definition of $F$,
$(y_\eps(t),w_\eps(t))$
satisfies the second linearized mixed constraint.

\begin{lemma}
  We have that:
  \\ {\rm (i)}
  If $\eps=0$, then
  $y_0(t)=0$ for all $t \in [0,T]$.
  \\ {\rm (ii)}
  If   $\eps_1=0$ and $\eps_2>0$, then
$y_\eps(t) \geq 0$ for all $t$, and is always equal to 0
iff the second mixed constraint is always  active.
\\ {\rm (iii)}
If   $\eps_1=0$ and $\eps_2>0$, 
Let $t_0:=\inf\{ t \in [0,T]; \;\; y(t) >0\}$.
If the second mixed constraint is not always active,
then $t_0< T$, and $y(t)>0$, for all $t\in (t_0,T]$.
In particular $y(T)>0$. 
\\ {\rm (iv)}
If   $\eps_1=0$ and $\eps_2>0$,
then the first mixed linearized constraint is never active.
\\ {\rm (v)}
Assume that the second mixed constraint is not always active.
Given $\eps_2>0$, small enough, if $\eps_1>1$ is small enough,
then $(y_\eps,w_\eps)$ satisfies
\hypQualif{}-\hypQualifIneq{}.
\end{lemma}
    
\begin{proof}
(i)
Let $z(t)$ denote the zero function on $[0,T]$. 
Then, in view of
\eqref{bound-kappa2}:
\be
F(t,z(t))) = \min(\kappa_2(t),\eps_2)
=
\min( \kappa_2(t),0) = 0.
\ee
This establishes point (i).
\\ (ii)
Since $F$ is a nondecreasing function of $\eps_2$,
{\em and $y(t)$ is one dimensional},
we deduce that $y_\eps(t)$
is a nondecreasing function of $\eps_2$,
and is therefore nonnegative (since $y_0=0$).
We have  $y_\eps=z$ iff
\be
0 = F(t,0 ) = \min(\kappa_2(t), \eps_2),
\quad \text{for all $t\in [0,T]$,}
\ee
that is, iff
the upper bound is always active.
\\ (iii)
Observe that since $\kappa_2(t) \geq 0$,
and 
$D \varphi_2(\gamma(t)) y_\eps(t) \leq  0$:
\be
\dot y(t)  = F(t,y(t))
\geq \min( D \varphi_2(\gamma(t)) y_\eps(t), \eps_2)
=
D \varphi_2(\gamma(t)) y_\eps(t)
\geq - C y_\eps (t).
\ee
Let $y_\eps (t_1)>0$.
It follows that for $t\in [t_1,T]$,
$y_\eps (t) \geq  e^{-C(t-t_1)} y_\eps(t_1)$.
Point (iii) follows.
\\ (iv)
Since $F$ is a Lipschitz
function of $(y,\eps)$, there exists
$C>0$ such that, for all $\eps\in \RR^2_+$: 
\be
\max_{t\in [0,T]} \left( 
| y_\eps(t)|
+ | D \varphi_1(\gamma(t)) y_\eps(t)|
+ | D \varphi_2(\gamma(t)) y_\eps(t)| \right)
\leq C_0 | \eps |.
\ee
Since $y_\eps(t) \geq 0$ and
$D \varphi_1(\gamma(t)) \leq 0$, we have 
\be
\kappa_1(t)+ D \varphi_1(\gamma(t)) y_\eps(t)
\leq \kappa_1(t) \leq \eps_1.
\ee
We distinguish two cases.
\\ (a)
If $w_\eps(t) = \eps_2$
the result holds,
whenever $\eps_1 <\eps_2$.
\\ (b)
If $w_\eps(t) = \kappa_2(t) +D \varphi_2(\gamma(t)) y_\eps(t)$,
since $\kappa_2(t)-\kappa_1(t)  \geq \delta - 2 \eps_1$, 
we get
\be
w(t) \geq \kappa_1(t) + \delta - 2 \eps_1 - C_0 |\eps|.
\ee
So, the result holds provided that
\be
2 \eps_1 + C_0 |\eps| \leq \delta.
\ee
\\ (v)
Fix $\eps_2>0$ small enough.
For $\eps_1>0$ small enough, we have by a
continuity argument that the first linearized mixed
constraint holds
(as well as the second one by the definition of
$F(t,y)$),
and that $y(T)>0$.
The conclusion follows. 
\end{proof}



\bibliographystyle{abbrv}
\bibliography{hjb,mfg}

\end{document}